\documentclass[12pt]{amsart}

\usepackage[utf8]{inputenc}
\usepackage[english]{babel}

\usepackage{latexsym,color,amsmath,amsthm,amssymb,amscd,amsfonts}
\usepackage{tikz}
\usetikzlibrary{arrows}
\usepackage{scalefnt}
\usepackage[font=small,labelfont=bf]{caption}
\usepackage{float}
\usepackage{graphicx}
\usepackage{relsize}
\usepackage{amsopn}
\usepackage{mathrsfs}  
\usepackage[hidelinks]{hyperref}
\usepackage{bm}
\usepackage{bbm}
\usepackage[shortlabels]{enumitem}
\usepackage{multicol}
\usepackage{float} 
\usepackage{todonotes}
\usepackage{cite}
\usepackage{soul}

\usepackage[margin=2.5cm]{geometry}

\newtheoremstyle{nonum}{}{}{\itshape}{}{\bfseries}{.}{ }{\thmnote{#3}}

\newtheorem{theorem}{Theorem}[section]
\newtheorem*{theorem*}{Theorem}

\newtheorem{corollary}[theorem]{Corollary}
\newtheorem{lemma}[theorem]{Lemma}

\newtheorem{proposition}[theorem]{Proposition}

\newtheorem{example}[theorem]{Example}
\newtheorem{definition}[theorem]{Definition}
\newtheorem*{definition*}{Definition}
\newtheorem{remark}[theorem]{Remark}
\newtheorem*{remark*}{Remark}
\newtheorem*{question*}{Question}

\theoremstyle{nonum}

\title{Homogeneous maximizers of the Blaschke--Santal{\'o}-type functionals}

\author{Alexander V. Kolesnikov}
\address{HSE University, Russian Federation}
\email{sascha77@mail.ru}

\begin{document}

\maketitle

\begin{abstract}
    We study Blaschke--Santal{\'o}-type inequalities for $N \ge 2$ sets (functions) and a special class of cost functions. In particular, we prove new results about the reduction of the maximization problem for  the Blaschke--Santal{\'o}-type functional to the homogeneous case (functional inequalities on the sphere) and prove new inequalities for $N>2$ sets by the symmetrization argument.
    We also discuss the links to the multimagrinal optimal transportation problem and the related sharp transportation-information inequalities.
\end{abstract}

\section{Introduction}

In this work, we discuss various extensions of the classical Blaschke--Santal\'o inequality
\begin{equation}
    \label{BSsets}
|A||A^\circ| \le |B_2|^2
\end{equation}
and its functional counterpart
\begin{equation}
\label{BSfunctions}
\int_{\mathbb{R}^n} e^{-V}dx  \int_{\mathbb{R}^n} e^{-V^*}dy \le \Bigl( \int_{\mathbb{R}^n} e^{-\frac{|x|^2}{2}}dx\Bigr)^2.
\end{equation}
Here $A \subset \mathbb{R}^n$ is a convex symmetric ($-A=A$) body, and $V \colon \mathbb{R}^n \to (-\infty,\infty]$ is a proper convex even function, $$A^{\circ}=\{y: \langle x,y \rangle \le 1\}$$ is the polar set of $A$, and $$V^*(y) = \sup_x (\langle x,y \rangle - V(x))$$ is the Legendre transform of $V$. Finally, $B_p = \{ x: \sum_{j=1}^n |x_j|^p \le 1\} \subset \mathbb{R}^n$ is the $l^p$-unit ball, and $|\cdot|$ is the Lebesgue volume.
The inequality (\ref{BSsets}) was proved by  Santal\'o \cite{santalo} (and by Blaschke in dimension 3). The inequality (\ref{BSfunctions}) is substantially younger and was first proved by 
K. Ball \cite{Ball1986} in 1986.

Let us consider $N \ge 2$ copies of $\mathbb{R}^n$.
The $i$-th copy will be equipped with the standard coordinate system $x_i$. We write
$$
x = (x_1, \cdots,x_N) \in (\mathbb{R}^n)^N, \ x_i \in \mathbb{R}^n.
$$ The coordinate representation of an arbitrary $x_i \in \mathbb{R}^n$ will be written in the following way:
$$
x_i = (x_{i,1}, \cdots, x_{i,j}, \cdots, x_{i,n}).
$$
The following interesting example of the Blaschke--Santal{\'o} inequality was proved by Kalantzopoulos and Saroglou in \cite{KS}

\begin{theorem} (Kalantzopoulos, Saroglou, \cite{KS})
\label{example1}
Let  $N \ge 2$ be a natural number. Consider the following cost function $$
c(x) = c(x_1,\cdots,x_N) 
= \sum_{j=1}^n x_{1,j}x_{2,j} \cdots x_{N,j} 
$$ 
on $(\mathbb{R}^n)^N$.
Assume we are given $N$ symmetric convex sets
 $A_i \subset \mathbb{R}^n$, $1 \le i \le N$  such that
$c(x_1,\cdots,x_N) \le 1$ on the set $\prod_{i=1}^N A_i \subset (\mathbb{R}^n)^N$. 
Then
$$
\prod_{i=1}^N |A_i| \le |B_N|^N.
$$
Let $\{V_i\}$, $1 \le i \le N,$ be even measurable functions with values in $(-\infty,+\infty]$ and satisfying
$
\sum_{i=1}^N V_i(x_i) \ge c(x).
$
Assume, in addition, that $e^{-V_i}\in L^1(dx_i)$ for every $i$. 
Then
$$
\prod_{i=1}^N \int_{\mathbb{R}^n} e^{-V_i(x_i)} dx_i 
\le \Bigl( \int_{\mathbb{R}^n} e^{-\frac{1}{N} \sum_{i=1}^n |y_i|^N} dy\Bigr)^N.
$$
\end{theorem}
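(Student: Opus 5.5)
The plan is to prove the functional inequality first and then obtain the set version from it. Set $g(t)=e^{-|t|^N/N}$ on $\mathbb{R}$, so that the right-hand side equals $\bigl(\int_{\mathbb{R}} g\bigr)^{nN}$.

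\textbf{Step 1: reduction to one dimension.} We would use iterated Steiner symmetrization in each coordinate direction, applied simultaneously to all $N$ functions. Fix the direction $e_j$ and the other coordinates. In that slice the constraint reads
\[
\sum_i V_i(x_i) \ge x_{1,j}\cdots x_{N,j} + C,
\]
where $C$ is a fixed multilinear term in the remaining coordinates. Replace each slice function $e^{-V_i}$ by its symmetric decreasing rearrangement in $x_{i,j}$. We expect a Prékopa--Leindler/Brascamp--Lieb type argument to show that the constraint is preserved. This is the multilinear analogue of the Meyer--Pajor symmetrization proof of the classical Blaschke--Santal\'o inequality, and it uses evenness. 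The integrals are unchanged under rearrangement. Iterating over all directions and passing to the limit, we would reduce to functions that are unconditional, and in fact symmetric decreasing in each coordinate.

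\textbf{Step 2: the unconditional case, via a multiplicative Prékopa--Leindler inequality.} For unconditional functions it suffices to integrate over the positive orthant. Substitute $x_{i,j}=e^{s_{i,j}}$ and set
\[
f_i(s_i)=e^{-V_i(e^{s_i})+\sum_j s_{i,j}} .
\]
The constraint together with AM--GM gives
\[
\prod_i f_i(s_i)^{1/N} \le \exp\Bigl(-\tfrac1N\sum_j e^{\sum_i s_{i,j}}+\sum_{i,j} s_{i,j}/N\Bigr).
\]
The right-hand side is exactly $h\bigl(\tfrac1N\sum_i s_i\bigr)$ with
\[
h(u)=\prod_j e^{-e^{Nu_j}/N+u_j},
\]
so the pushforward of $g$ in log coordinates appears naturally. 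The Prékopa--Leindler inequality with $N$ functions and equal weights $1/N$ then gives
\[
\prod_i \int f_i \le \Bigl(\int h\Bigr)^N .
\]
Here $\int h = \bigl(\int_0^\infty e^{-t^N/N}dt\bigr)^n$. Multiplying by $2^{nN}$ gives the claim.

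\textbf{Step 3: the set version.} Apply the functional inequality to $V_i=\|x_i\|_{A_i}^N/N$. If $x_i\in \lambda_iA_i$, then multilinearity gives $c(x)\le\prod_i\lambda_i$, and AM--GM gives $\prod_i\lambda_i\le\sum_i\lambda_i^N/N$. So the hypothesis holds. Finally use $\int e^{-\|x\|_A^N/N}dx=|A|\,\Gamma(1+n/N)N^{n/N}$ and compare with the same identity for $B_N$.

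\textbf{Expected obstacle.} The main obstacle is Step 1. We must justify that coordinatewise rearrangement preserves the multilinear constraint, given that the cost is not of the form $\langle x,y\rangle$. The first attempt would be a one-dimensional lemma: if $\sum_i v_i(t_i)\ge \prod_i t_i + C$ for even $v_i$, then the same holds for the symmetric rearrangements, proved via level-set inclusion. Failing that, we would use a Brascamp--Lieb-type rearrangement inequality on each slice. Evenness of the $V_i$ is essential here, because the sign of $\prod_i t_i$ can be flipped in each slice.
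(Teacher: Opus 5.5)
\textbf{There is a genuine gap in Step 1.} Simultaneous Steiner symmetrization of all $N$ functions in the same direction does \emph{not} preserve admissibility. This already fails for $N=n=2$ and $c(x,y)=\langle x,y\rangle$. Take $K=\{x:|x_1|\le 1,\ |x_2-x_1|\le 1\}$ and $L=K^\circ=\mathrm{conv}\{\pm(1,0),\pm(1,-1)\}$. Every chord of $K$ in direction $e_2$ has length $2$, and every such chord of $L$ has length $1$. Hence $SK=[-1,1]^2$ and $SL=[-1,1]\times[-\tfrac12,\tfrac12]$. Yet $x=(1,1)\in SK$ and $y=(1,\tfrac12)\in SL$ give $\langle x,y\rangle=\tfrac32>1$. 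Equivalently, the pair $(\tfrac12\|x\|_K^2,\tfrac12\|y\|_L^2)$ is admissible, but its rearrangement $(\tfrac12\|x\|_{SK}^2,\tfrac12\|y\|_{SL}^2)$ is not.

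The original pair is admissible only because the chords over $x_1=y_1=1$, namely $[0,2]$ and $[-1,0]$, are shifted in opposite directions. Centering both destroys this, so no Pr\'ekopa--Leindler or Brascamp--Lieb argument can rescue the step. Your one-dimensional lemma does not apply either. For fixed transversal parts $y_i$, the slice $t\mapsto V_i(y_i+te_j)$ is not even: evenness of $V_i$ only pairs the slice through $y_i$ with the one through $-y_i$. Evenness of the slices is exactly the unconditionality you are trying to produce. This is also not what Meyer--Pajor do; they symmetrize only one body. Steps 2 and 3 are correct. Step 2 is essentially the paper's Proposition~\ref{1211}, and the constraint alone gives the pointwise bound there, so no AM--GM is needed.

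\textbf{The missing idea} is the paper's Lemma~\ref{sym} and Corollary~\ref{uncond-reduction}. Steiner-symmetrize only $A_1$ in $e_j$ and keep $A_3,\dots,A_N$ fixed. Assume without loss of generality that $A_2$ is the $c$-polar of $A_1,A_3,\dots,A_N$. Replace $A_2$ by the $c$-polar $B$ of $\tilde A_1,A_3,\dots,A_N$, not by its own symmetral. Write $A(r)=\{y\perp e_j: y+re_j\in A\}$. The key inclusion is $\tfrac12\bigl(A_2(r)+A_2(-r)\bigr)\subseteq B(r)$. To see it, take $a\in A_2(r)$, $b\in A_2(-r)$ and $(y_1,t),(y_1,s)\in A_1$. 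Flipping the sign of $x_{2,j}$ changes $c$ exactly as flipping $x_{1,j}$ does. So the constraint for $(b,-r)$ against $(y_1,s)$ becomes the constraint for $(b,r)$ against $(y_1,-s)$. Since $c$ is affine jointly in $(x_{1,j},y_2)$, averaging this with the constraint for $(a,r)$ against $(y_1,t)$ shows $(\tfrac{a+b}2,r)$ is admissible against $(y_1,\tfrac{t-s}2)$. Brunn--Minkowski on each section, together with $A_2(-r)=-A_2(r)$, then gives $|B|\ge|A_2|$, while $|\tilde A_1|=|A_1|$. Doing this for $e_1,\dots,e_n$, then for the next pair of indices, makes all sets unconditional in finitely many steps, with no limit needed.

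Since this reduction lives naturally at the level of sets, you have two options. You can carry out its functional analogue (symmetrize $V_1$, replace $V_2$ by the multiple $c$-Legendre transform, apply Pr\'ekopa--Leindler on sections, as the paper indicates) and keep your order. Alternatively, run your Step 2 with $f_i=\mathbf{1}_{A_i}$ and the constraint $c\le 1$ to get the unconditional set inequality. Then recover the functional inequality by the layer-cake argument of Proposition~\ref{set-func}, after first replacing the $V_i$ by their even convex $c$-transforms.
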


We  extend this result in section 6 of our work, where we obtain, in particular, a generalization for functionals of the type $\prod_{i=1}^N |A_i|^{\frac{1}{\alpha_i}}$.
This example can be considered an illustration of several results of this work, where the central question is the relation between the functional and the set version of the Blaschke--Santal\'o type functional.  
We can put it into the following form: {\it when does the Blaschke--Santal{\'o} functional admit homogeneous extremizers for  given homogeneous reference measures and a homogeneous cost function? }

Before we  explain the  results of this paper, 
let us provide an incomplete overview of the related previous results  below. See more information in the nice recent survey of Fradelizi, Meyer, and Zvavitch
\cite{FMZ}.

\begin{itemize}
    \item Generalizations to the  non-symmetric case, proofs of the classical case.

It was proved in Artstein--Klartag--Milman \cite{AAKM} that for any convex (not necessary symmetric) $V$ there exists a point $a \in \mathbb{R}^n$ such that 
    \begin{equation}
\label{BSfunctions-nonsym}
\int_{\mathbb{R}^n} e^{-\tilde{V}}dx  \int_{\mathbb{R}^n} e^{-\tilde{V}^*}dy \le \Bigl( \int_{\mathbb{R}^n} e^{-\frac{|x|^2}{2}}dx\Bigr)^2,
\end{equation}
where $\tilde{V}(x) = V(x-a)$.
If $\int_{\mathbb R^n} xe^{-V}dx=0$, one can take $a=0$.

Let us give a short comment about the proofs in the classical case. 
A short proof of (\ref{BSfunctions}) not relying on the set version (\ref{BSsets}) was obtained by Lehec \cite{Lehec}.  
Bianchi and Kelli \cite{BK} proved the classical Blaschke--Santal\'o inequality using the Fourier analysis.
Nakamura and Tsuji \cite{NT} obtained a proof of the functional  Blaschke--Santal\'o inequality based on the heat flow approach (see a non-symmetric generalization in Cordero-Erausquin--Fradelizi--Langharst \cite{CoFrLa} and a more direct proof in Cordero-Erausquin--Gozlan--Nakamura--Tsuji \cite{CEGoNT}). Moreover, they obtained a more general functional $L^p-L^q$ inequality. 
A variational proof  of (\ref{BSfunctions}) based on the uniqueness of the solution to the corresponding Monge--Amp\`ere equation has been obtained in Colesanti--Kolesnikov--Livschyts--Rotem \cite{CKLR} (see also subsection 3.1 here).
For the  questions of stability see Barthe--B\"or\"oczky--Fradelizi\cite{BBF} and the reference therein.

 \item Generalized Legendre-type transforms.

 Some other types of the dualities for functions  and the related Blaschke--Santal\'o-type inequalities have been considered already in \cite{AAKM}.  
 An important extension, that is particularly relevant to our work is the result of Fradelizi and Meyer \cite{FradeliziMeyer2008(2)}. They established an inequality of the type
 $$
\int_{\mathbb{R}^n} f(x) dx \int_{\mathbb{R}^n} g(y) dy \le \Bigl( \int_{\mathbb{R}^n} \rho(|x|^2) dx\Bigr)^2
 $$
 under the constraint $f(x) g(y) \le \rho(\langle x- z, y - z \rangle)$, where $f,g \colon \mathbb{R}^n \to \mathbb{R}_+$ and $\rho \colon \mathbb{R}_+ \to \mathbb{R}_+$ (see \cite{FradeliziMeyer2008(2)} for precise formulation).
Another result of \cite{FradeliziMeyer2008(2)} is the following generalization of (\ref{BSfunctions}):
\begin{equation}
\label{BS-FM}
\int_{\mathbb{R}^n} \rho(V)dx \int_{\mathbb{R}^n} \rho(V^*) dy \le \Bigl(\int_{\mathbb{R}^n} \rho\bigl({|x|^2}/{2}\bigr) dx\Bigr)^2.
\end{equation}
Here $\rho$ is non-increasing and log-concave, $V$ is even and convex.

There are several extensions of the Blaschke--Santal{\'o} inequality for special types of the generalized Legendre transform. See, for instance, Rotem  \cite{Rotem2014},   
Artstein--Slomka \cite{ArtsteinSlomka},
Artstein--Sadovski--Wyczesany \cite{ASW}, 
Fradelizi--Gozlan--Sadovsky--Zugmeyer \cite{FGSZ}.

\item Relation to the optimal transportation and the information theory.

It is well--known that (\ref{BSfunctions}) admits a transportation counterpart. This can be established by the arguments going back to B.~Maurey (see M.~Fathi \cite{Fathi}): let $\gamma$ be the standard Gaussian measure and $f \cdot \gamma, g \cdot \gamma$ be probability measures such that $f \cdot \gamma$ is centered: $\int_{\mathbb{R}^n} xf d \gamma=0$. Then
the following transportation inequality holds:
\begin{equation}
    \label{BStransport}
\frac{1}{2} W^2_2(f \cdot \gamma, g \cdot \gamma) 
\le \int_{\mathbb{R}^n} f \log f d\gamma + \int_{\mathbb{R}^n} g \log g d\gamma.
\end{equation}
Here $W_2$ is the transportation distance on probability measures 
$$
W^2_2(\mu_1,\mu_2) = \inf_{\pi \in \Pi(\mu_1,\mu_2)} \int_{(\mathbb{R}^n)^2} |x-y|^2 \pi(dx dy),
$$
where $ \Pi(\mu_1,\mu_2)$ is the set of probability measures with projections $\mu_1,\mu_2$.
Inequality (\ref{BStransport}) can be extracted from (\ref{BSfunctions})  and vice versa by relatively simple arguments.
Later Gozlan \cite{Gozlan} extended the arguments of Fathi and established a transportation  version of the famous 
Mahler conjecture. Following observation of Kolesnikov \cite{Kolesnikov2018}  about transportation functionals on the sphere 
Fradelizi--Gozlan--Sadovsky--Zugmeyer \cite{FGSZ} proved a variant of the inequality 
(\ref{BStransport}) on $\mathbb{S}^{n-1}$ for an appropriate cost function (see also \cite{BKS}). Courtade--Fathi--Mikulincer obtained in \cite{CFM} a stochastic proof of (\ref{BStransport}).

\item Generalization for $N>2$ functions.

Motivated by the theory of geodesic barycenters ("Wasserstein barycenters")  of probability measures, 
Kolesnikov and Werner \cite{KW} conjectured that the following inequality  might be true:
\begin{equation}
\label{multBSf}
    \prod_{i=1}^N \int_{\mathbb{R}^n} e^{-V_i(x_i)} dx_i \le \Bigl(\int_{\mathbb{R}^n} e^{-\frac{|x|^2}{2} } dx\Bigr)^N
\end{equation}
under assumption that the functions $\{V_i(x_i)\}$ are even and  satisfy 
$$
\sum_{i=1}^N V_i(x_i) \ge \frac{1}{N-1}\sum_{j \ne i} \langle x_i,x_j \rangle.
$$
The choice of the cost function
$$
c_{bar} = \sum_{j \ne i} \langle x_i,x_j \rangle
$$
is motivated by the barycenter problem.
Clearly, (\ref{multBSf}) is a generalization of (\ref{BSfunctions}).

Kolesnikov and Werner \cite{KW} proved that (\ref{multBSf}) holds under assumption of unconditionality. Some generalizations of this result were obtained by Kalantzopoulos and Saroglou \cite{KS}. In particular, the following  version of (\ref{multBSf}) was conjectured:
\begin{equation}
    \label{multBSsets}
\prod_{i=1}^N |A_i| \le |B_2|^N,
\end{equation}
where $A_i$ are convex symmetric sets satisfying
$\sum_{j \ne i} \langle x_i,x_j \rangle \le \frac{N(N-1)}{2}$ for any tuple $\{x_i\}$ such that $x_i \in A_i$.
The question about inequality (\ref{multBSsets}) remains open to our best knowledge.

Inequality (\ref{multBSf}) was proved in full generality by   Nakamura and Tsuji \cite{NT2}. Finally, Courtade and Wang \cite{CW}  obtained a generalization of (\ref{multBSf}) in the non-symmetric case.

\item Weighted Blaschke--Santal\'o inequality.

Another generalization of (\ref{BSfunctions}) was obtained in Colesanti--Kolesnikov--Livschyts--Rotem \cite{CKLR}.
They studied inequalities of
the following type:
\begin{equation}
\label{BSPhi}
\Bigl( \int_{\mathbb{R}^n} e^{-V} dx\Bigr)^{\frac{1}{p}} 
\Bigl( \int_{\mathbb{R}^n} e^{-\frac{1}{p-1}V^*(\nabla \Phi)} dx\Bigr)^{\frac{p-1}{p}}
\le \int_{\mathbb{R}^n} e^{-\Phi} dx,
\end{equation}
where $\Phi$ is a $p$-homogeneous convex even function and $p>1$.

Assume we are given a strictly convex (and sufficiently regular) $p$-homogeneous even function $\Phi$ such that  (\ref{BSPhi}) holds for every even $V$. 
$p$-homogeneity implies that $\Phi = \frac{1}{1-p} \Phi^*(\nabla \Phi)$, hence $\Phi$
is a maximum point of the functional
\begin{align*}
\mathcal{F}_{p,\Phi}(V) & = \Bigl( \int_{\mathbb{R}^n} e^{-V} dx\Bigr)^{\frac{1}{p}} 
\Bigl( \int_{\mathbb{R}^n} e^{-\frac{1}{p-1}V^*(\nabla \Phi)} dx\Bigr)^{\frac{1}{q}}
\\& = \Bigl( \int_{\mathbb{R}^n} e^{-V} dx\Bigr)^{\frac{1}{p}} 
\Bigl( \int_{\mathbb{R}^n} e^{-\frac{1}
{p-1}V^*} \det D^2 \Phi^*dy\Bigr)^{\frac{1}{q}}.
\end{align*}
We observe that in general  the maximizer
$\Phi$ corresponds to the non-Gaussian distribution $\frac{e^{-\Phi}}{\int e^{-\Phi}dx} dx$.

It was realized in \cite{CKLR} that inequality (\ref{BSPhi}) can indeed hold for some functions $\Phi$. The simplest non-Gaussian example is given by
$$
\Phi = \sum_{i=1}^n |x_i|^p
$$
with $p>2$. On the other hand, the same function with $p<2$ does not satisfy   (\ref{BSPhi}).

\item Relation to other problems.

Finally, we mention several interesting developments related to the inequalities of the Blaschke--Santal{\'o} type.
First, motivated by the Nakamura--Tsuji result \cite{NT2},
E.~Milman \cite{Milman} obtained a new proof of the Gaussian correlation inequality that was a long-standing conjecture.  Nakamura and Tsuji \cite{NT3} generalized his proof and obtained a non-symmetric version of the Gaussian correlation inequality. 
Another interesting development concerns relation of the Mahler conjecture  to the Bourgain slicing problem, see 
Klartag \cite{Klartag}, Fradelizi--Sola \cite{FS}, and 
Mastrantonis--Rubinstein \cite{BMR}.
 Kolesnikov--Livschyts--Rotem studied in \cite{KLR} the infinitesimal versions of (\ref{BSPhi}) in relation to the log-Brunn--Minkowski problem.

\end{itemize}

Motivated by these results, we  study the following generalized Blaschke--Santal\'o functional:

\begin{equation}\label{BSF0}
\mathcal{BS}_{\alpha,m}(V_1,\cdots,V_N)=
\prod_{i=1}^N \Bigl( \int_{\mathbb{R}^n} e^{-\alpha_i V_i(x_i)}  m_i(dx_i) \Bigr)^{\frac{1}{\alpha_i}}.
\end{equation}
Here $m_i$ are the reference measures on $\mathbb{R}^n$, and  $\alpha_i$
are given positive numbers.

As in the classical case, we always assume that the functions $V_i$ satisfy 
\begin{equation}
\label{admiss}
\sum_{i=1}^N V_i(x_i) \ge c(x_1,\cdots,x_N)=c(x)
\end{equation}
for a given cost function $c$.
If (\ref{admiss}) holds, we say that the tuple $\{V_i\}$ is {\bf admissible}.

In this work, we study, in particular, the following  questions.

\begin{itemize}
    \item[A)] Equivalence between the set and the functional version.

The equivalence of the set inequality (\ref{BSsets}) and the functional inequality (\ref{BSfunctions})
is well known. Implication (\ref{BSfunctions}) $\Longrightarrow$ (\ref{BSsets}) is almost immediate (one takes $V = \frac{1}{2} |x|^2_K, V^* = \frac{1}{2} |x|^2_{K^\circ}$, where $|\cdot |_K$ is the Minkowski functional of $K$ and applies the layer-cake decomposition).
Implication (\ref{BSsets}) $\Longrightarrow$ (\ref{BSfunctions}) is trickier, the reader can find it, for instance, in \cite{AAKM}, \cite{Ball1986}.

For the case $N>2$ and the barycentric cost function $c_{bar}$, the equivalence between (\ref{multBSf}) and (\ref{multBSsets}) is not known.

\item[B)] Gaussianity and homogeneity  of the maximizers.

A remarkable observation of Nakamura and Tsuji was the fact that the inequality (\ref{multBSf}) fits into a large family of inequalities possessing  Gaussian maximizers (the so-called Gaussian saturation principle).  Inequalities of such type are called Brascamp--Lieb inequalities and they go back to the classical work of Brascamp and Lieb  \cite{BL}.

A relevant observation from Colesanti--Kolesnikov--Livschyts--Rotem \cite{CKLR} was the {\bf homogeneity} of the maximizers of $\mathcal{F}_{p,\Phi}$. This property has a certain correspondence to the Gaussian saturation  principle for inequalities of the Brascamb--Lieb type, but seems to be even more general, because it can potentially be  applied to situations, where Gaussian maximizers are not possible. An example is given by the inequality 
(\ref{BSPhi}) for  $
\Phi = \sum_{i=1}^n |x_i|^p
, p>2$,
proved in \cite{CKLR}.

\item[C)] Optimal transportation counterpart for the Blaschke--Santal\'o-type inequalities for  $\mathcal{F}_{p,\Phi}$.

    We are interested in generalizations of the sharp  Talagrand inequality (\ref{BStransport}) for the case of $N>2$ functions and non-quadratic costs. 
Some developments in this direction have been obtained in  
 \cite{Fathi}, \cite{KW}, \cite{Gozlan}, \cite{FGSZ}, \cite{CFM}. This part is strongly related 
to the multimarginal and barycentric transportation problems, in particular, to duality theory and multiple Legendre-type transforms.  
\end{itemize}

We want to find sufficient conditions ensuring  that all the maximizers of (\ref{BSF0}) are homogeneous.
First, we assume that 
the reference measures 
    $m_i = \rho_i(x_i)dx_i, \ 1 \le i \le N,$
 admit  $r_i$-homogeneous densities
   \begin{equation}
   \label{dens-hom}
\rho_i(tx_i) = t^{r_i} \rho_i(x_i),
\end{equation}
where  $r_i$ are some nonnegative numbers.

Of course, another natural assumption is that the cost function $c$ is homogeneous as well. The minimal requirement for $c$ will be $p$-homogeneity with respect to $x=(x_1,\cdots,x_N)$:
\label{cost-hom}
\begin{equation}
c(tx) 
= t^{p} c(x)
\end{equation}
for some positive number $p>0$.

Assume that the maximizers $\{V_i(x_i)\}$ of  (\ref{BSF0}) are $\beta_i$-homogeneous
\begin{equation}
    \label{potential0hom}
V_i(tx_i) = t^{\beta_i} V_i(x_i), \ \ \ t \ge 0.
\end{equation}
First, we observe a natural consistency condition for numbers $\alpha_i, \beta_i,r_i,p$.
We observe that $\beta_i$-homogeneity of $V_i$ and $p$-homogeneity of $c$ immediately imply that for every $t>0$ one has
$$
\sum_{i=1}^N V_i(t^{1-\frac{p}{\beta_i}}x_i) \ge c(x).
$$
Thus, the tuple $\{V_{i,t}\}$, where $V_{i,t}(x_i) = V_i(t^{1-\frac{p}{\beta_i}}x_i)$, is admissible.
Next, we compute using $r_i$-homogeneity of $\rho_i$ and the change of variables:
$$
\mathcal{BS}_{\alpha,m}(V_{1,t},\cdots,V_{N,t}) = t^{\sum_{i=1}^N \frac{n+r_i}{\alpha_i}\bigl( \frac{p}{\beta_i}-1\bigr) } \mathcal{BS}_{\alpha,m}(V_{1},\cdots,V_{N}).
$$
From this relation, we see that the tuple $\{V_i\}$ cannot be a maximizer unless one has 
$
\sum_{i=1}^N \frac{n+r_i}{\alpha_i}\bigl( \frac{p}{\beta_i}-1\bigr) =0.
$
Thus, it is necessary to assume the following consistency condition:
\begin{equation}
\label{consist1}
p = \frac{
\sum_{i=1}^N \frac{n+r_i}{\alpha_i }}{
\sum_{i=1}^N \frac{n+r_i}{\alpha_i \beta_i}}.
\end{equation}

We note that (\ref{consist1}) is a minimal requirement for the
existence of maximizers, but it is not sufficient (see Example 5.15 in \cite{CKLR}). 
Another observation is that  for many homogeneous cost functions, there exists a limited choice of $\beta_i$ such that the set of admissible $\beta_i$-homogeneous tuples $\{V_i\}$ is not empty.
Consider, for instance, the barycentric function
$
c_{bar}(x) = \frac{1}{N-1} \sum_{i,j=1}^N \langle x_i, x_j \rangle.
$
The reader can easily verify that in this case, the inequality 
$\sum_{i=1}^N V_i(x_i) \ge c(x)$ is possible for $\beta_i$-homogeneous $V_i$ only if $\beta_i=2$.

This is why, in addition to (\ref{consist1}),
we will assume another  consistency condition:
\begin{equation}
\label{consist2}
\frac{n+r_1}{\beta_1} = \frac{n+r_2}{\beta_2} =\cdots = \frac{n+r_N}{\beta_N}. 
\end{equation}
Under  (\ref{consist2})  the condition (\ref{consist1}) can be equivalently rewritten in the following way:
\begin{equation}
\label{cons-p}
p = \frac{\sum_{i=1}^{N} \frac{\beta_i}{\alpha_i}}{\sum_{i=1}^{N} \frac{1}{\alpha_i}}. 
\end{equation}

In this work, we extend several results on  different forms of the Blaschke--Santal{\'o} inequality  to the following class of cost functions:

\begin{definition}
    We say that a function $c(x_1,\cdots,x_N)$
    is  $(p_1,\cdots,p_N)$-homogeneous if for all $1 \le i \le N$ and $t\ge 0$ one has
$$
c(x_1,\cdots,t x_i, \cdots,x_N) 
= t^{p_i} c(x).
$$
\end{definition}

The central result of this work is the following Theorem on the homogeneity of the maximizers of the Blaschke--Santal{\'o} functional:

\begin{theorem}
\label{homogtheorem0}
Let $m_i$ satisfy assumption (\ref{dens-hom}) and have densities $\rho_i$ that are  positive almost everywhere. 
Let $c$ be  a cost function with the following properties: 
\begin{itemize}
\item 
$c$ is a continuous  $(p_1,\cdots,p_N)$-homogeneous for some numbers $p_i>0$
\item for every 
$1 \le i \le N$, there exist numbers 
$s_j \in \{-1,1\}$, $j \ne i$ such that 
$$
c(x_1, \cdots, x_{i-1}, - x_i, x_{i+1},\cdots,x_N) = 
c(s_1x_1, \cdots,  s_{i-1}x_{i-1}, x_i,  s_{i+1}x_{i+1},\cdots, s_N x_N) 
$$
\item 
  $c$ is   convex with respect to every variable, i.e., $$
x \to c(x_1, \cdots, x_{i-1},x, x_{i+1}, x_N)
$$ is a convex function for all $i \in [1,\cdots,N]$.
\end{itemize}
 
Consider the functional 
$$
\mathcal{BS}_{\alpha,m}(V_1,\cdots,V_N)= 
\prod_{i=1}^N \Bigl( \int_{\mathbb{R}^n} e^{-\alpha_i V_i(x_i)}  m_i(dx_i) \Bigr)^{\frac{1}{\alpha_i}}
$$
 with 
 $
\alpha_i = \frac{n+r_i}{np_i}
 $ on the set of even admissible tuples $\{V_i\}$.
Let  $\Phi_1, \cdots,\Phi_N$ be a maximizing tuple of (\ref{BSF0}).
 Then every function  $\Phi_j-\Phi_j(0)$ is $\beta_j$-homogeneous with 
$
\beta_j = \bigl(\sum_{i=1}^N \frac{1}{\alpha_i}\bigr)\bigl(1+ \frac{r_j}{n}\bigr).
$
\end{theorem}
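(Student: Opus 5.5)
We will argue by a symmetrization (Steiner/rearrangement-type) argument combined with a scaling family, in the spirit of the homogeneity proof in \cite{CKLR}. First, we reduce to the case where every maximizer is the "best" admissible partner of the others. Given a maximizing tuple $\Phi_1,\dots,\Phi_N$, we replace each $\Phi_i$ by the $c$-transform
$$
\Phi_i^c(x_i)=\sup_{x_j,\,j\neq i}\Bigl(c(x)-\sum_{j\ne i}\Phi_j(x_j)\Bigr),
$$
which is admissible, no larger than $\Phi_i$, and hence does not decrease the functional. Since $\rho_i>0$ a.e., maximality forces $\Phi_i=\Phi_i^c$ a.e.; we then take the $c$-concave representatives. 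Evenness of these representatives is preserved thanks to the sign-symmetry assumption on $c$, and convexity in each variable makes each $\Phi_i$ convex, being a supremum of convex functions.

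Next we introduce the key one-parameter deformation. For $t>0$ and each $i$, we set $\Phi_{i,t}(x_i)=\Phi_i(t^{\gamma_i}x_i)$ with exponents $\gamma_i$ satisfying $\sum_i \gamma_i p_i$ compatible with admissibility. Using the $(p_1,\dots,p_N)$-homogeneity of $c$, we get $c(t^{\gamma_1}x_1,\dots)=t^{\sum\gamma_ip_i}c(x)$, so choosing $\sum_i\gamma_ip_i=0$ keeps the tuple admissible. By the $r_i$-homogeneity of $\rho_i$, the functional then scales by $t^{-\sum_i\gamma_i(n+r_i)/\alpha_i}=t^{-n\sum_i\gamma_ip_i}=1$ because of the choice $\alpha_i=(n+r_i)/(np_i)$. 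Thus the functional is \emph{invariant} along this whole $(N-1)$-parameter family, so these deformations produce new maximizers. The goal is to show that the maximizer is in fact invariant under a related scaling. Concretely, we will combine $\Phi_i$ with its rescalings through a "geometric mean" operation: for admissible tuples $\{V_i\}$ and $\{W_i\}$ we consider $\lambda V_i+(1-\lambda)W_i$ (admissible by linearity of the constraint) and apply H\"older/Pr\'ekopa--Leindler. The result is that the maximizing set is convex in a log sense, and equality in H\"older forces $e^{-\alpha_i\Phi_i}$ and $e^{-\alpha_i\Phi_{i,t}}$ to be proportional, i.e. $\Phi_i(t^{\gamma_i}x)=\Phi_i(x)+C_i(t)$.

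Finally, from $\Phi_i(t^{\gamma_i}x)-\Phi_i(0)=\Phi_i(x)-\Phi_i(0)+\tilde C_i(t)$, together with convexity and evenness, we deduce a functional equation. Plugging in $x=0$ normalizes the constants, and the multiplicative structure in $t$ shows that $\tilde C_i$ is logarithmic. Combining this with the $c$-duality relation $\Phi_i=\Phi_i^c$ turns the additive relation into a multiplicative one: $\Phi_i(sx)-\Phi_i(0)=s^{\beta_i}(\Phi_i(x)-\Phi_i(0))$. The exponent $\beta_j$ is then read off from the consistency conditions (\ref{consist2}) and (\ref{cons-p}) with $p=\sum p_i$. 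I expect the main obstacle to be exactly this step, namely upgrading invariance of the functional under rescaling to pointwise homogeneity. Pure scaling invariance only yields a family of maximizers, and a uniqueness or strict-concavity argument is needed. I would try first a Pr\'ekopa--Leindler-type inequality along the geodesic $t\mapsto \Phi_{i,t}$ (sup-convolution rather than plain averaging), using convexity in each variable of $c$ to preserve admissibility. I would then analyze the equality cases, which should force translation invariance in logarithmic radial coordinates, i.e. homogeneity.
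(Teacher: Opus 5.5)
\textbf{There is a genuine gap.} Your first step is correct and matches the paper: replacing each $\Phi_i$ by its multiple $c$-transform makes all $\Phi_i$ even and convex with $\Phi_i(0)=\min\Phi_i<\infty$. Your invariance observation is also correct: $\mathcal{BS}_{\alpha,m}$ is unchanged under $\Phi_i\mapsto\Phi_i(t^{\gamma_i}\cdot)$ when $\sum_i\gamma_ip_i=0$. The core argument, however, fails at three points.

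(i) H\"older goes the wrong way. The map $V\mapsto\log\int e^{-\alpha V}dm$ is convex. So for two maximizing tuples, the averaged admissible tuple only satisfies $\mathcal{BS}_{\alpha,m}(\lambda V+(1-\lambda)W)\le\mathcal{BS}_{\alpha,m}(V)^{\lambda}\mathcal{BS}_{\alpha,m}(W)^{1-\lambda}$. That is just a second upper bound, so equality in H\"older is never forced.

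(ii) The Pr\'ekopa--Leindler variant needs the inf-convolved tuple to be admissible. That would require joint convexity of $c$ in $(x_1,\dots,x_N)$; separate convexity is not enough. In addition, $|x|^{r_i}$ with $r_i>0$ is not log-concave. Both routes already fail in the classical case $c=\langle x_1,x_2\rangle$, $\Phi_1=\Phi_2=\frac12|x|^2$, with the dilate $\{\frac{s^2}{2}|x|^2,\frac{1}{2s^2}|x|^2\}$. Put $A=\lambda+(1-\lambda)s^{-2}$ and $B=\lambda+(1-\lambda)s^2$, so $AB>1$ for $s\ne1$. The average has value $(2\pi)^n(AB)^{-n/2}$, strictly below the maximum. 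The inf-convolution is $\{\frac{|x|^2}{2A},\frac{|x|^2}{2B}\}$, which is not admissible.

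(iii) Even if equality could be forced, $\Phi_i(t^{\gamma_i}x)=\Phi_i(x)+C_i(t)$ is the wrong conclusion. Setting $x=0$ gives $C_i\equiv0$, so $\Phi_i$ would be constant on rays. An even convex function finite at $0$ with that property is constant where finite, which is incompatible with admissibility and integrability. Homogeneous maximizers are \emph{not} fixed by your family; they are moved to the different maximizers $\Phi_i(0)+t^{\gamma_i\beta_i}(\Phi_i-\Phi_i(0))$. So the family only shows non-uniqueness and cannot yield homogeneity through any uniqueness or strict-concavity argument. The ``logarithmic $\tilde C_i$'' and ``$c$-duality makes it multiplicative'' steps therefore have no content.

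The missing idea is the paper's second-variation and transport argument.
\begin{enumerate}
\item Perturb along the \emph{non-invariant} admissible family $t^{-p}\Phi_i(tx_i)$ with $p=\sum_ip_i$. The first and second order conditions at $t=1$ give $\int c\,d\gamma=\sum_i\int\Phi_i\,d\mu_i=n$ and $\sum_i\alpha_i{\rm Var}_{\mu_i}(\Phi_i)\le n$ (Lemma~\ref{1and2order}).
\item By Corollary~\ref{maxim}, $\{\Phi_i\}$ solves the dual multimarginal problem for $\mu_i\propto e^{-\alpha_i\Phi_i}m_i$. Hence $\sum_i\Phi_i=c$ on the support of an optimal plan $\gamma$.
\item Since $\sum_i\Phi_i\ge c$ everywhere, this gives $\nabla\Phi_i(x_i)=\nabla_{x_i}c(x)$. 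By Euler's identity, $\langle x_i,\nabla\Phi_i(x_i)\rangle=p_ic(x)$ $\gamma$-a.e.
\item Integrate by parts. The boundary term vanishes because $\int c\,d\gamma=n=\frac{n+r_i}{p_i\alpha_i}$, and one gets ${\rm Var}_\gamma(c)=n\sum_i\alpha_i^{-1}$ exactly.
\item The weighted quadratic identity gives ${\rm Var}_\gamma(c)\le(\sum_i\alpha_i^{-1})\sum_i\alpha_i{\rm Var}_{\mu_i}(\Phi_i)\le n\sum_i\alpha_i^{-1}$. Equality must therefore hold, which forces $\alpha_i(\Phi_i-\int\Phi_i\,d\mu_i)=\alpha_j(\Phi_j-\int\Phi_j\,d\mu_j)$ $\gamma$-a.e.
\item Substituting into $c=\frac{1}{p_j}\langle x_j,\nabla\Phi_j\rangle$ yields the Euler equation $\langle x_j,\nabla\Phi_j\rangle=\beta_j(\Phi_j-\Phi_j(0))$, i.e.\ homogeneity.
\end{enumerate}
Your proposal has none of these ingredients: the optimal plan, the contact and Euler identity, the second variation, and the variance equality case.
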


Let us briefly describe the main results of the paper.
In section 2, we discuss the relation of the Blaschke--Santal{\'o} inequality to the optimal transportation problem and prove a multimarginal (in general non-Gaussian) version of the inequality 
    (\ref{BStransport}):
    $$
    \inf_{\pi \in \Pi(\nu_1,\cdots,\nu_N)} \int d(x) d\pi \le \sum_{i=1}^N \frac{1}{\alpha_i} {\rm Ent}_{\mu_i}(\nu_i).
$$
Here $\Pi(\nu_1,\cdots,\nu_N)$ is the set of measures with projections $\nu_1,\cdots,\nu_N$, 
$
d(x) = \sum_{i=1}^N \Phi_i(x_i)- c(x),
$
$\{\Phi_i\}$ is a maximizing tuple for 
    $\mathcal{BS}_{\alpha,m}$, and $\mu_i   = \frac{e^{-\alpha_i \Phi_i} }{\int e^{-\alpha_i \Phi_i} dm_i} \cdot m_i$.
    Theorem \ref{homogtheorem0}
is proved in section 3. We show in sections 4-5 
that $(p_1,\cdots,p_N)$-homogeneous cost functions admit other nice properties. In particular, we prove in section 4 that under
some additional assumptions on $c$,
the maximization problem for $\mathcal{BS}_{\alpha,m}$ is equivalent to the maximization problem for the Blaschke--Santal{\'o} set functional
$$
\prod_{i=1}^N m_i(K_i)^{\frac{1}{\alpha_1}}, 
$$
where  $\{K_i\}$ are symmetric, convex, and  $c(x) \le 1$ on $\prod_{i=1}^n K_i$. In  section 5, we discuss  the monotonicity of the   Blaschke--Santal\'o functional  under an appropriate version of the Steiner symmetrization procedure. In section 6, we consider examples. In particular, we prove a more general version of Theorem \ref{example1} of Kalantzopoulos and Saroglou.

\subsection*{Acknowledgements}
The  author was supported by RSF project 25-11-00007. 
The author thanks Matthieu Fradelizi for his interest and comments about the bibliography,
and Christos Saroglou for comments about Theorem \ref{example1}.

\section{Optimal transportation approach}

Let us remind the basic facts about the mass transportation problem. Given $N$ probability measures $\mu_i$ on $\mathbb{R}^n$ and a cost function $c(x) = c(x_1,\cdots,x_n)$, we consider the Kantorovich functional
$$
K_c(\pi) =  \int_{(\mathbb{R}^n)^N} c d\pi,
$$
where $\pi$ is a measure on $(\mathbb{R}^n)^N$ with marginal $\mu_i$: ${\rm Pr}_i \pi = \mu_i$. Here we are interested in the maximization problem. If the value of $K_c$ is maximized on some measure $\gamma$, we say that $\gamma$ is a solution to the Kantorovich maximization problem for the cost function $c$. For the existence of $\gamma$, it is sufficient to assume that $c$ is upper-semicontinuous and 
admits some integrability properties (see \cite{BKS},  { \cite{VillaniOldNew}}).

The corresponding linear dual functional $D$ is defined on tuples of functions $$(f_1(x_1),\cdots,f_N(x_N)),$$
satisfying the assumption
$\sum_{i=1}^N f_i(x_i) \ge c$,
and takes the form
$$
D(f_1,\cdots,f_N) = \sum_{i=1}^N \int_{\mathbb{R}^n} f_id\mu_i.
$$
We always have
$$
K_c \le D.
$$
Under broad assumptions on $c$, the dual functional $D$ admits a minimum point, and its minimum coincides with the maximum of the Kantorovich functional (no duality gap).
If the tuple $$(f_1(x_1),\cdots,f_N(x_N))$$ is a minimum point of $D$, the complementary slackness condition implies
$$
\sum_{i=1} f(x_i)=c(x)
$$
for $\gamma$-almost all $x =(x_1, \cdots,x_N)$.

Without loss of generality, we can always assume that 
our admissible tuple $\{f_i\}$ satisfies the following property:
$$
f_i(x_i) = \sup_{x_j, j \ne i} \bigl( c(x) - \sum_{j \ne i} f(x_j)\bigr),
$$
i.e., every $f_i$ is the multiple $c$-Legendre transform of the other functions (see \cite{Pass}). 

In what follows, we consider 
$N$-tuples of functions $V_1(x_1), \cdots,V_N(x_N)$, $x_i \in \mathbb{R}^n$, satisfying the assumption
$$
\sum_{i=1}^N V_i(x_i) \ge c
$$
for a given cost function $c=c(x_1,\cdots,x_N)$ (admissible tuples), and  the following Blaschke--Santal\'o-type functional:
\begin{equation}\label{BSF}
\mathcal{BS}_{\alpha,m}(V_1,\cdots,V_N)= 
\prod_{i=1}^N \Bigl( \int_{\mathbb{R}^n} e^{-\alpha_i V_i(x_i)}  m_i(dx_i) \Bigr)^{\frac{1}{\alpha_i}}.
\end{equation}
Here $m_i$ are some probability reference measures on $\mathbb{R}^n$, and  $\alpha_i$
are given positive numbers.

\begin{remark}
    In this section, we {\bf don't assume} that $m_i$ and $c$ are homogeneous.
\end{remark}

We associate with the tuple $\{V_i(x_i)\}$ a tuple of probability measures
\begin{equation}
\nu_i = \frac{e^{-\alpha_i V_i}}{\int e^{-\alpha_i V_i}dm_i} \cdot m_i, \ 1 \le i \le N.
\end{equation}

The Blaschke--Santal\'o functional
admits the following monotonicity property related to the
multimarginal optimal transportation problem with marginals $\nu_i$. In a less general form, it was first observed in \cite{KW}.

\begin{theorem} \label{monotone} (Monotonicity of the Blaschke--Santal\'o functional under optimal transportation). 
Assume that 
\begin{itemize}
    \item the optimal transportation problem 
    with marginals $\nu_i$ and the cost function $c$ admits a solution $\gamma \in \mathcal{P}((\mathbb{R}^{n})^N)$
    \item the corresponding dual transportation problem admits a solution 
    $\Phi_1, \cdots,\Phi_N$
    \item no duality gap holds.    
\end{itemize}
Then
$$
\prod_{i=1}^N \Bigl( \int_{\mathbb{R}^n} e^{-\alpha_i V_i}  dm_i \Bigr)^{\frac{1}{\alpha_i}}
\le \prod_{i=1}^N \Bigl( \int_{\mathbb{R}^n} e^{-\alpha_i \Phi_i}  dm_i \Bigr)^{\frac{1}{\alpha_i}}.
$$
\end{theorem}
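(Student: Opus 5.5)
The plan is to combine the variational (Donsker--Varadhan) formula for $\log\int e^{-\alpha_i V_i}\,dm_i$ with Kantorovich duality for the marginals $\nu_i$. For any probability measure $\nu\ll m_i$ and any function $W$ we have the entropy inequality
$$
\frac{1}{\alpha_i}\log \int_{\mathbb{R}^n} e^{-\alpha_i W}\,dm_i \;\ge\; -\int_{\mathbb{R}^n} W\,d\nu - \frac{1}{\alpha_i}{\rm Ent}_{m_i}(\nu),
$$
and equality holds exactly when $\nu = e^{-\alpha_i W}m_i/\int e^{-\alpha_i W}dm_i$. By the definition of $\nu_i$, equality therefore holds for $W=V_i$, $\nu=\nu_i$. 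Summing over $i$ gives
$$
\log \mathcal{BS}_{\alpha,m}(V_1,\cdots,V_N) = -\sum_{i=1}^N \int V_i\,d\nu_i - \sum_{i=1}^N \frac{1}{\alpha_i}{\rm Ent}_{m_i}(\nu_i),
$$
while for $W=\Phi_i$ and the same measures $\nu_i$ we only get the inequality
$$
\log \mathcal{BS}_{\alpha,m}(\Phi_1,\cdots,\Phi_N) \ge -\sum_{i=1}^N \int \Phi_i\,d\nu_i - \sum_{i=1}^N \frac{1}{\alpha_i}{\rm Ent}_{m_i}(\nu_i).
$$
So it suffices to prove $\sum_i \int \Phi_i\,d\nu_i \le \sum_i\int V_i\,d\nu_i$.

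This is where the transport assumptions are used. The tuple $\{V_i\}$ is admissible, so it is a competitor in the dual problem $D$ with marginals $\nu_i$. Since $\{\Phi_i\}$ minimizes $D$, we get $D(\Phi_1,\cdots,\Phi_N)\le D(V_1,\cdots,V_N)$, which is exactly the needed inequality. Equivalently, with no duality gap, $\sum_i\int\Phi_i\,d\nu_i = \int c\,d\gamma \le \int \sum_i V_i(x_i)\,d\gamma = \sum_i\int V_i\,d\nu_i$, using admissibility of $\{V_i\}$ pointwise and the marginal constraints of $\gamma$. Exponentiating the two displayed relations then yields the claimed inequality.

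The main obstacle is technical: we need the integrals to make sense. If $\int V_i\,d\nu_i=+\infty$ or the entropy is infinite, the identity breaks down, and $\Phi_i$ may fail to be $\nu_i$-integrable. I would first handle the case $\mathcal{BS}(V)>0$ finite, in which each $\nu_i$ is well defined. Then $\int V_i\,d\nu_i$ is finite whenever ${\rm Ent}_{m_i}(\nu_i)$ is finite, and I would truncate $V_i$ (replace it by $\min(V_i,k)$, which still dominates $c$ after adjusting, or pass to monotone limits) if necessary. The integrability of the $\Phi_i$ against the $\nu_i$ is implicit in the hypothesis that they solve the dual problem with finite value, and I would take that as part of the assumptions. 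The degenerate case $\mathcal{BS}(V)=0$ is trivial.
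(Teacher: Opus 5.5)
Your argument is correct, but it follows a different route from the paper's. The paper never linearizes. It uses complementary slackness to get the pointwise bound $\sum_i\Phi_i(x_i)=c(x)\le\sum_i V_i(x_i)$ for $\gamma$-a.e.\ $x$. It then integrates $1\le e^{\alpha\sum_i(V_i-\Phi_i)}$ against $\gamma$, with $\alpha=(\sum_i 1/\alpha_i)^{-1}$, and applies H\"older's inequality with exponents $\alpha_i/\alpha$. Each factor $\int e^{\alpha_i(V_i-\Phi_i)}\,d\nu_i$ then equals $\int e^{-\alpha_i\Phi_i}dm_i/\int e^{-\alpha_iV_i}dm_i$, because $\nu_i$ is the Gibbs measure of $V_i$.

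You instead use the Gibbs variational principle, which is tight at $(V_i,\nu_i)$. This reduces the statement to the linear comparison $\sum_i\int\Phi_i\,d\nu_i\le\sum_i\int V_i\,d\nu_i$, i.e.\ to optimality in the dual problem.

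What each route buys:
\begin{itemize}
\item The paper's route is robust. Every integrand is nonnegative, so it needs no integrability of $V_i$ or $\Phi_i$ and no finite entropies. Nothing depends on $m_i$ being finite, which matters because the result is later invoked through Corollary \ref{maxim} with $m_i=\rho_i\,dx_i$ homogeneous.
\item Your route has a leaner logical dependence. It uses only that $\{\Phi_i\}$ minimizes $D$, so the existence of $\gamma$ and the absence of a duality gap are not really needed. It also shows the statement is the same entropy/log-Laplace duality the paper uses in the subsequent transport--entropy theorem.
\end{itemize}

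One technical correction: replacing $V_i$ by $\min(V_i,k)$ is not legitimate. Truncating from above can destroy $\sum_iV_i\ge c$ when $c$ is unbounded. Truncate from below instead; in this section it settles all integrability issues:
\begin{itemize}
\item Since $m_i$ is a probability measure here, $\int V_i^+\,d\nu_i\le\bigl(e\,\alpha_i\int e^{-\alpha_iV_i}\,dm_i\bigr)^{-1}<\infty$.
\item The tuple $\{\max(V_i,-k)\}$ is still admissible and lies in $L^1(\nu_i)$ componentwise. Dual optimality plus monotone convergence in $k$ then give $\sum_i\int V_i\,d\nu_i\ge\sum_i\int\Phi_i\,d\nu_i>-\infty$.
\item Hence every $\int V_i\,d\nu_i$ is finite, and so is every ${\rm Ent}_{m_i}(\nu_i)=-\alpha_i\int V_i\,d\nu_i-\log\int e^{-\alpha_iV_i}dm_i$.
\end{itemize}
This is exactly what your chain of (in)equalities requires, given that $\Phi_i\in L^1(\nu_i)$ as you assume.
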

\begin{proof}
By the assumption on $\{V_i\}$ and the complementary slackness condition
$$
\sum_{i=1}^N \Phi_i(x_i) = c(x) \le \sum_{i=1}^N V_i(x_i)
$$
  for $\gamma$-almost all $(x_1,\cdots,x_N)$. Take
   $$
\alpha = \frac{1}{\sum_{i=1}^N \frac{1}{\alpha_i}}
   $$
   and integrate the inequality
   $
1 \le e^{\alpha \sum_{i=1}^N{(V_i - \Phi_i)}}
   $
   over $\gamma$. By the H\"older inequality
   $$
1 \le \int_{(\mathbb{R}^n)^N} e^{\alpha \sum_{i=1}^N{(V_i - \Phi_i)}} d\gamma \le \prod_{i=1}^N \Bigl(\int_{\mathbb{R}^n} e^{\alpha_i(V_i-\Phi_i) }d\gamma\Bigr)^{\frac{\alpha}{\alpha_i}}
= \Bigl(\prod_{i=1}^N \frac{\int_{\mathbb{R}^n}  e^{-\alpha_i\Phi_i }dm_i}{\int_{\mathbb{R}^n}  e^{-\alpha_iV_i }dm_i}\Bigr)^{\frac{\alpha}{\alpha_i}}.
   $$
   The result follows immediately from this inequality.
\end{proof}

\begin{corollary} \label{maxim}
    Assume that the maximum of $\mathcal{BS}_{\alpha,m}(V_1,\cdots,V_N)$
over all admissible tuples is reached on the tuple $\{\Phi_i\}$. Assume, in addition, that all the assumptions of the previous theorem are fulfilled for the cost  $c$ and probability measures  $\mu_i = \frac{e^{-\alpha_i \Phi_i} }{\int e^{-\alpha_i \Phi_i} dm_i} \cdot m_i$. Then $\{\Phi_i\}$ solves the dual transportation problem for $c$ and $\mu_i$.
\end{corollary}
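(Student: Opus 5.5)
The approach is to compare the maximizing tuple $\{\Phi_i\}$ with its own image under the corresponding duality inequality, using the monotonicity statement of Theorem \ref{monotone}. First, I check that the maximizing tuple satisfies the hypotheses of that theorem for the measures $\mu_i = \frac{e^{-\alpha_i \Phi_i}}{\int e^{-\alpha_i\Phi_i}dm_i}\cdot m_i$. These hypotheses are: existence of an optimal plan $\gamma$ for the cost $c$, existence of dual potentials $\{f_i\}$ (which may be taken to be multiple $c$-Legendre transforms, as recalled above), and absence of a duality gap. I plan to assume them under the standing integrability conditions, citing \cite{Pass}, \cite{VillaniOldNew} and \cite{BKS}; continuity of $c$ and finiteness of $\mathcal{BS}_{\alpha,m}(\Phi_1,\dots,\Phi_N)$ give the required integrability of $c$ against $\gamma$, since $c\le \sum \Phi_i$.

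The dual tuple $\{f_i\}$ is admissible, so Theorem \ref{monotone}, applied with $V_i=\Phi_i$ and $\nu_i=\mu_i$, yields
$$
\mathcal{BS}_{\alpha,m}(\Phi_1,\dots,\Phi_N) \le \mathcal{BS}_{\alpha,m}(f_1,\dots,f_N).
$$
By maximality of $\{\Phi_i\}$, equality must hold. It then remains to show that $\{\Phi_i\}$ itself minimizes the dual functional $D(g_1,\dots,g_N)=\sum_i\int g_i\,d\mu_i$. To see this, I revisit the proof of Theorem \ref{monotone}. Equality there forces equality in both the pointwise step and the H\"older step. From equality in the first step,
$$
\int e^{\alpha\sum_i(\Phi_i-f_i)}\,d\gamma=1
\quad\text{with}\quad
\sum_i(\Phi_i-f_i)\ge 0 \ \ \gamma\text{-a.e.},
$$
we conclude $\sum_i \Phi_i(x_i)=\sum_i f_i(x_i)=c(x)$ for $\gamma$-a.e. $x$.

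Integrating this identity against $\gamma$ gives
$$
D(\Phi_1,\dots,\Phi_N)=\int c\,d\gamma = K_c(\gamma)=\min D.
$$
The last equality is the assumed absence of a duality gap. Hence $\{\Phi_i\}$ is a minimizer of the dual problem, which is exactly the claim.

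The main obstacle I expect is justifying equality in the chain when the integrals may be infinite or when $\Phi_i$ may take the value $+\infty$. I would handle this by noting that $\int e^{-\alpha_i\Phi_i}dm_i\in(0,\infty)$ at a maximizer. Also, $\mu_i$ is concentrated where $\Phi_i<\infty$, so $\gamma$-a.e. all quantities are finite. The equality case for a nonnegative exponent is then elementary, and the H\"older equality case is not even needed.
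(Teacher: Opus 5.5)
Your proposal is correct and follows the reasoning the paper leaves implicit when it states this result as an immediate corollary of Theorem \ref{monotone}: apply that theorem with $V_i=\Phi_i$ and the dual solution $\{f_i\}$, use maximality to force equality throughout the chain, conclude $\sum_i\Phi_i=c$ $\gamma$-a.e.\ because the exponent is nonnegative, and finish with $D(\Phi_1,\dots,\Phi_N)=\int c\,d\gamma=\min D$ by the no-duality-gap assumption. One side remark is loose but harmless, since the corollary assumes the hypotheses of Theorem \ref{monotone} outright: the bound $c\le\sum_i\Phi_i$ controls only $c^+$, and the integrability of $\Phi_i^+$ against $\mu_i$ comes from $m_i$ being finite, not from $\mathcal{BS}_{\alpha,m}$ being finite.
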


The following result is a generalization of the sharp Talagrand inequality (\ref{BStransport}) (see 
 \cite{Fathi}, \cite{Gozlan}, \cite{FGSZ}, \cite{CFM}). For $N>2$, it was known in a less general situation (see \cite{KW}).
Introduce the following new cost function:
$$
d(x) = \sum_{i=1}^N \Phi_i(x_i)- c(x) \ge 0.
$$

Given $N$ probability measures $\{\nu_i\}, 1 \le i \le N$, consider the multimarginal {\bf minimization} transportation problem for $d$:
$$
K^{\min}_d(\nu_1, \cdots,\nu_N) = \inf_{\pi \in \Pi(\nu_1,\cdots,\nu_N)} \int d(x) d\pi,
$$
where $\Pi(\nu_1,\cdots,\nu_N)$ is the set of measures with projections $\nu_1,\cdots,\nu_N$.

\begin{theorem}
Assume that the maximum of $\mathcal{BS}_{\alpha,m}(V_1,\cdots,V_N)$
over all admissible tuples is reached on the tuple $\{\Phi_i\}$.
    Assume that for a tuple of probability measures $\{\nu_i\}$ and the cost function $
d(x) = \sum_{i=1}^N \Phi_i(x)- c(x)
$, both the primal and dual transportation problems admit solutions and no duality gap  holds. Then 
    $$
K^{\min}_d(\nu_1,\cdots,\nu_N) \le \sum_{i=1}^N \frac{1}{\alpha_i} {\rm Ent}_{\mu_i}(\nu_i).
    $$
\end{theorem}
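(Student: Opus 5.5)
Ths proof will be by duality, in the style of the Maurey--Fathi argument. Let $(f_1,\dots,f_N)$ solve the dual problem for the cost $d$ and the marginals $\nu_i$, i.e. $\sum_i f_i(x_i)\le d(x)$, with the convention suited to minimization. Since there is no duality gap,
$$
K^{\min}_d(\nu_1,\dots,\nu_N)=\sum_{i=1}^N\int f_i\,d\nu_i .
$$
Set $V_i=\Phi_i-f_i$. The constraint $\sum_i f_i\le \sum_i\Phi_i-c$ says exactly that $\sum_i V_i(x_i)\ge c(x)$, so $\{V_i\}$ is admissible. By maximality of $\{\Phi_i\}$ we get $\mathcal{BS}_{\alpha,m}(V_1,\dots,V_N)\le\mathcal{BS}_{\alpha,m}(\Phi_1,\dots,\Phi_N)$. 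Dividing, and writing $\mu_i=e^{-\alpha_i\Phi_i}m_i/Z_i$, this becomes
$$
\sum_{i=1}^N\frac{1}{\alpha_i}\log\int e^{\alpha_i f_i}\,d\mu_i\le 0 .
$$

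Next, apply the Gibbs (Donsker--Varadhan) variational inequality to each term:
$$
\int \alpha_i f_i\,d\nu_i\le {\rm Ent}_{\mu_i}(\nu_i)+\log\int e^{\alpha_i f_i}\,d\mu_i .
$$
Divide by $\alpha_i$ and sum over $i$. Together with the previous display this gives
$$
\sum_i\int f_i\,d\nu_i\le\sum_i\frac{1}{\alpha_i}{\rm Ent}_{\mu_i}(\nu_i),
$$
and the left-hand side equals $K^{\min}_d$, which proves the claim.

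The main obstacle is technical. We must make sure the $f_i$ are integrable against $\nu_i$, so that the Gibbs inequality applies. We may assume ${\rm Ent}_{\mu_i}(\nu_i)<\infty$, since otherwise the inequality is trivial. We also have to justify the manipulations when $\Phi_i$ or $V_i$ take the value $+\infty$; here we use the convention $e^{-\infty}=0$, and $\nu_i\ll\mu_i$ ensures $\Phi_i<\infty$ $\nu_i$-a.e. If needed, I would first truncate each $f_i$ from above, for instance replacing $f_i$ by $\min(f_i,k)$, which preserves the constraint. I would then pass to the limit $k\to\infty$ by monotone convergence.
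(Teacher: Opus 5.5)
Your proposal is correct and is the paper's argument. The paper also takes a dual solution $\{f_i\}$ and uses maximality on the admissible tuple $\{\Phi_i - f_i\}$ to get $\sum_i \frac{1}{\alpha_i}\log\int e^{\alpha_i f_i}\,d\mu_i\le 0$; it then concludes via the pointwise inequality $xy\le e^x-1+y\log y$ with $y=d\nu_i/d\mu_i$, which is exactly the proof of the Gibbs variational inequality you invoke, while your integrability and truncation remarks address points the paper leaves implicit.
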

\begin{proof}
    Let $\{f_i(x_i)\}$ be a solution to the dual transportation problem for $\{\nu_i\}$ and $d$. In particular,
    $$
\sum_{i=1}^N f_i(x_i) \le \sum_{i=1}^N \Phi_i(x_i) - c(x).
    $$ 
    Since 
    $
\sum_{i=1}^N (\Phi_i - f_i)(x_i) \ge c(x), 
    $
    one has
    $
\prod_{i=1}^N \bigl(\int e^{\alpha_i(f_i - \Phi_i)} dm_i\bigr)^{\frac{1}{\alpha_i}} \le \bigl( \prod_{i=1}^N \int e^{-\alpha_i \Phi_i} dm_i\bigr)^{\frac{1}{\alpha_i}}.
    $
    by the optimality of $\{\Phi_i\}$.
    This inequality can be rewritten as follows:
    $$
\sum_{i=1}^N \frac{1}{\alpha_i} \log \int e^{\alpha_i f_i} d\mu_i \le 0.
    $$
    Let $\nu_i$ have densities with respect to $\mu_i$: $\nu_i = \rho_i \cdot\mu_i$ (otherwise, the result is trivial).
Applying the inequality $xy \le e^x-1 + y \log y$, one gets the result:
\begin{align*}K^{\min}_d(\nu_1,\cdots,\nu_N) & = \sum_{i=1}^N \int f_i d\nu_i \le 
\sum_{i=1}^N \frac{1}{\alpha_i} \int \bigl( \alpha_if_i -   \log \int e^{\alpha_i f_i} d\mu_i \bigr) \rho_i d\mu_i \\& \le
\sum_{i=1}^N  \frac{1}{\alpha_i} \bigl(\rho_i \log \rho_i -1 + e^{ \alpha_if_i -   \log \int e^{\alpha_i f_i} d\mu_i }  \bigr)d\mu_i
\\& = \sum_{i=1}^N \frac{1}{\alpha_i} \int \rho_i \log \rho_i d\mu_i = \sum_{i=1}^N \frac{1}{\alpha_i} {\rm Ent}_{\mu_i}(\nu_i).
\end{align*}

\end{proof}

Moreover, the following reverse Theorem is true.

\begin{theorem}
    Let $c$ be a cost function and $\{\Phi_i(x_i)\}$ be a tuple of functions satisfying
    $\sum_{i=1}^N \Phi(x_i) \ge c(x)$. Assume that the Kantorovich functional $K^{\min}_d$ with
    $d = \sum_{i=1}^N \Phi_i(x_i) - c(x)$
    satisfies inequality
    $$
K^{\min}_d(\nu_1, \cdots,\nu_N) \le \sum_{i=1}^N \frac{1}{\alpha_i} {\rm Ent}_{\mu_i}(\nu_i)
    $$
    for $\mu_i = \frac{e^{-\alpha_i \Phi_i(x_i)} dx_i}{\int e^{-\alpha_i\Phi_i} dx_i}$ and an arbitrary tuple of probability measures $\{\nu_i\}$. Then $\{\Phi_i\}$ is a maximum point of
    $\mathcal{BS}_{\alpha,m}$.
    \end{theorem}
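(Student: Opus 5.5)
We reverse the argument of the preceding theorem: we use the duality between relative entropy and the log-Laplace functional, testing the transport–entropy inequality against the measures induced by an arbitrary admissible tuple. Fix an admissible tuple $\{V_i\}$, i.e. $\sum_{i=1}^N V_i(x_i)\ge c(x)$, with $\int e^{-\alpha_i V_i}dm_i<\infty$. Here $m_i=dx_i$, as in the statement. We must show
$$
\prod_{i=1}^N\Bigl(\int e^{-\alpha_i V_i}dm_i\Bigr)^{\frac{1}{\alpha_i}}\le \prod_{i=1}^N\Bigl(\int e^{-\alpha_i \Phi_i}dm_i\Bigr)^{\frac{1}{\alpha_i}}.
$$
Set $f_i=\Phi_i-V_i$. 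Admissibility of $\{V_i\}$ gives
$$
\sum_i f_i(x_i)\le \sum_i\Phi_i(x_i)-c(x)=d(x).
$$
Hence, for any coupling $\pi\in\Pi(\nu_1,\dots,\nu_N)$,
$$
\sum_i\int f_i\,d\nu_i=\int\sum_i f_i\,d\pi\le\int d\,d\pi .
$$
Taking the infimum over $\pi$ yields $\sum_i\int f_i\,d\nu_i\le K^{\min}_d(\nu_1,\dots,\nu_N)$. This is the trivial direction of duality and requires no duality theorem.

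Next, choose $\nu_i$ to be the Gibbs measures $\nu_i=\frac{e^{\alpha_i f_i}}{\int e^{\alpha_i f_i}d\mu_i}\,\mu_i$, which are exactly the measures $\frac{e^{-\alpha_i V_i}}{\int e^{-\alpha_i V_i}dm_i}m_i$. For these measures the variational formula for entropy is an equality:
$$
{\rm Ent}_{\mu_i}(\nu_i)=\int \alpha_i f_i\,d\nu_i-\log\int e^{\alpha_i f_i}d\mu_i .
$$
Combining this with the hypothesis $K^{\min}_d\le\sum_i\frac1{\alpha_i}{\rm Ent}_{\mu_i}(\nu_i)$ gives
$$
\sum_i\int f_i\,d\nu_i\le \sum_i\int f_i\,d\nu_i-\sum_i\frac1{\alpha_i}\log\int e^{\alpha_i f_i}d\mu_i .
$$
Therefore $\sum_i\frac1{\alpha_i}\log\int e^{\alpha_i f_i}d\mu_i\le0$. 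Since
$$
\int e^{\alpha_i f_i}d\mu_i=\frac{\int e^{-\alpha_iV_i}dm_i}{\int e^{-\alpha_i\Phi_i}dm_i},
$$
exponentiating yields exactly the desired inequality.

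The main obstacle is justifying the cancellation of $\sum_i\int f_i\,d\nu_i$ on both sides, which requires these integrals to be finite; equivalently, ${\rm Ent}_{\mu_i}(\nu_i)<\infty$ and $f_i\in L^1(\nu_i)$. First I would reduce to the case where each $V_i$ is bounded below and $\Phi_i-V_i$ is bounded. To get there, I would replace $V_i$ by $\max(V_i,\Phi_i-k)$, which keeps $f_i\le k$, and truncate from above where needed. Then I would pass to the limit $k\to\infty$ by monotone convergence in $\int e^{-\alpha_iV_i}dm_i$. If truncation breaks admissibility, I would instead apply the argument on bounded sets, i.e. take $\nu_i$ restricted and renormalized to balls, where the test functions are bounded, and let the radius tend to infinity. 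The degenerate cases where some $\int e^{-\alpha_iV_i}=0$ are trivial.
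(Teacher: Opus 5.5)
Your argument is the paper's proof: you test the hypothesis on $\nu_i\propto e^{-\alpha_i V_i}dx_i$, bound $\sum_i\int(\Phi_i-V_i)\,d\nu_i$ by $K^{\min}_d$ via weak duality, and cancel this linear term against the explicit formula for $\sum_i\frac{1}{\alpha_i}{\rm Ent}_{\mu_i}(\nu_i)$. Your integrability discussion goes beyond the paper, which cancels formally; the truncation $\max(V_i,\Phi_i-k)$ alone already suffices, since $f_i^-\in L^1(\nu_i)$ automatically ($te^{-\alpha_i t}$ is bounded on $[0,\infty)$), but letting $k\to\infty$ only recovers $\int_{\{\Phi_i<\infty\}}e^{-\alpha_iV_i}\,dx_i$, so, like the paper, you are tacitly assuming $\Phi_i<\infty$ a.e. (the hypothesis is vacuous when $\nu_i\not\ll\mu_i$).
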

\begin{proof}
    Let $\{V_i\}$ be a tuple of functions satisfying 
$\sum_i V_i \ge c.
$ Define probability measures $
\nu_i = \frac{e^{-\alpha_i V_i} dx_i}{\int e^{-\alpha_i V_i}dx_i}.
$
One has
$
{\rm Ent}_{\mu_i}(\nu_i)
= \int (\alpha_i \Phi_i -\alpha_i V_i )d\nu_i
+ \log \bigl( \frac{\int e^{-\alpha_i \Phi_i}dx_i}{\int e^{-\alpha_i V_i}dx_i}\bigr).
$
Hence
$$
\sum_{i=1}^N \frac{1}{\alpha_i} {\rm Ent}_{\mu_i}(\nu_i) = \sum_{i=1}^N (\Phi_i -V_i)d\nu_i
+ \log \prod_{i=1}^N \Bigl(\frac{\int e^{-\alpha_i \Phi_i}dx_i}{\int e^{-\alpha_i V_i}dx_i}\Bigr)^{\frac{1}{\alpha_i}}.
$$
On the other hand
$\sum_{i=1}^N (\Phi_i - V_i) \le  \sum_{i=1}^N \Phi_i -c$. Hence, by duality (the primal functional dominates the  dual functional in the minimization problem)
$$
K^{\min}_d(\nu_1, \cdots,\nu_N)
\ge \sum_{i=1}^N  \int (\Phi_i - V_i) d\nu_i.
$$
Finally, the assumption of the theorem implies:
$$
\sum_{i=1}^N  \int (\Phi_i - V_i) d\nu_i \le K^{\min}_d(\nu_1, \cdots,\nu_N)
\le  \sum_{i=1}^N \frac{1}{\alpha_i} {\rm Ent}_{\mu_i}(\nu_i) = \sum_{i=1}^N (\Phi_i -V_i)d\nu_i
+ \log \prod_{i=1}^N \Bigl(\frac{\int e^{-\alpha_i \Phi_i}dx_i}{\int e^{-\alpha_i V_i}dx_i}\Bigr)^{\frac{1}{\alpha_i}}.
$$
Thus, one gets the desired inequality.
\end{proof}

\section{Homogeneity of the maximizers}

In this section, we generalize Theorem B from \cite{CKLR} and find sufficient conditions for the homogeneity of the maximizers of $\mathcal{BS}_{\alpha,m}$. 
It will always be assumed that the measures $m_i$ satisfy the assumption (\ref{dens-hom}):
$$
\rho_i(tx) = t^{r_i} \rho(x)
$$
for all $t \ge 0$ and $x_i \in \mathbb{R}^n$.
The cost function $c$ is assumed to be at least $p$-homogeneous:
$$
c(tx) = t^p c(x)
$$
for all $t \ge 0$ and $x \in (\mathbb{R}^n)^N$, but for the main result, we ask for the stronger assumption of $(p_1,\cdots,p_N)$-homogeneity:
$$
c(x_1, \cdots, t_i x_i, \cdots,x_N)
= t^{p_i}_i c(x)
$$
for all $1 \le i \le N, x \in (\mathbb{R}^n)^N, t_i \ge 0$.

Recall the standard notation
$$
{\rm Var}_{\mu}f = \int f^2 d\mu - \Bigl(\int f d\mu \Bigr)^2.
$$

\begin{lemma}
\label{1and2order}
Consider $\mathcal{BS}_{\alpha,m}$ functional, where $c$ is $p$-homogeneous and every $m_i$ satisfies  (\ref{dens-hom}). 
Let $\{\Phi_i\}$ be a tuple of maximizers for $\mathcal{BS}_{\alpha,m}$ and $\mu_i = \frac{ e^{-\alpha_i \Phi_i}}{\int e^{-\alpha_i \Phi_i}dx_i}dx_i$.
Then the following is true:
\begin{equation}
    \label{firstorder}
    \int c d \gamma =\sum_{i=1}^{N} \int \Phi_i d\mu_i =  \sum_{i=1}^N \frac{n+r_i}{p\alpha_i}
\end{equation}
\begin{equation}
    \label{secondorder}
    \sum_{i=1}^N \alpha_i {\rm Var}_{\mu_i}(\Phi_i) \le    \sum_{i=1}^N \frac{n+r_i}{p\alpha_i}
\end{equation}
\begin{equation}
    {\rm Var}_{\gamma}(c) \le \bigl({\sum_{i=1}^N \frac{1}{\alpha_i}}\bigr)\sum_{i=1}^N \alpha_i {\rm Var}_{\mu_i} (\Phi_i).
\end{equation}
In addition, one has equality in the last inequality if and only if $$\alpha_i \bigl(\Phi_i(x_i) -\int \Phi_i d\mu_i\bigr) = \alpha_j \bigl(\Phi_j(x_j) -\int \Phi_j d\mu_j\bigr)$$ $\gamma$-almost everywhere.
\end{lemma}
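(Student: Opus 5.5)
Throughout, $\gamma$ denotes the optimal multimarginal plan for the marginals $\mu_i$ and cost $c$. By Corollary \ref{maxim} (assuming its transport hypotheses hold), $\{\Phi_i\}$ solves the dual problem, so complementary slackness gives $\sum_i \Phi_i(x_i)=c(x)$ $\gamma$-a.e. The proof perturbs the maximizer along the one-parameter family of admissible tuples built from the $p$-homogeneity of $c$ and the $r_i$-homogeneity of $\rho_i$. The first- and second-order optimality conditions in the parameter give \eqref{firstorder} and \eqref{secondorder}; the last inequality is a Cauchy--Schwarz estimate on $\gamma$.

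\textbf{Perturbations.} I would use the family $V_{i,t}(x_i)=t^{p}\,\Phi_i(x_i/t)$. It is admissible because $\sum_i t^p\Phi_i(x_i/t)\ge t^p c(x/t)=c(x)$. The change of variables $x_i=ty_i$ gives $\int e^{-\alpha_i V_{i,t}}\rho_i\,dx_i = t^{n+r_i}\int e^{-\alpha_i t^p \Phi_i(y)}\rho_i(y)\,dy$. Hence
$$\log \mathcal{BS}(V_{\cdot,t}) = \sum_i \frac{n+r_i}{\alpha_i}\log t + \sum_i\frac1{\alpha_i}\log\int e^{-\alpha_i t^p\Phi_i}\,dm_i =: F(t),$$
and $F$ has a maximum at $t=1$. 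Writing $s=t^p$, so that $\log t=\frac1p\log s$, the function $G(s)=\sum_i\frac{n+r_i}{p\alpha_i}\log s+\sum_i \frac1{\alpha_i}\Lambda_i(s)$ with $\Lambda_i(s)=\log\int e^{-\alpha_i s\Phi_i}\,dm_i$ is maximized at $s=1$.

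\textbf{First and second order.} We have $\Lambda_i'(1)=-\alpha_i\int\Phi_i\,d\mu_i$ and $\Lambda_i''(1)=\alpha_i^2{\rm Var}_{\mu_i}\Phi_i$. The condition $G'(1)=0$ gives $\sum_i\frac{n+r_i}{p\alpha_i}=\sum_i\int\Phi_i\,d\mu_i$. Combined with $\int c\,d\gamma=\sum_i\int\Phi_i\,d\mu_i$, which follows from complementary slackness and the marginal constraints, this is \eqref{firstorder}. The condition $G''(1)\le 0$ reads $-\sum_i\frac{n+r_i}{p\alpha_i}+\sum_i\alpha_i{\rm Var}_{\mu_i}\Phi_i\le0$, which is \eqref{secondorder}.

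\textbf{Variance bound.} Put $\psi_i=\Phi_i-\int\Phi_i\,d\mu_i$. Then $c-\int c\,d\gamma=\sum_i\psi_i(x_i)$ $\gamma$-a.e. Writing $\psi_i=\alpha_i^{-1/2}\cdot\alpha_i^{1/2}\psi_i$ and applying Cauchy--Schwarz pointwise gives
$$\Bigl(\sum_i\psi_i\Bigr)^2\le \Bigl(\sum_i\frac1{\alpha_i}\Bigr)\sum_i\alpha_i\psi_i^2 .$$
Integrating over $\gamma$ and using that the $i$-th marginal of $\gamma$ is $\mu_i$ gives the stated bound on ${\rm Var}_\gamma(c)$. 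Equality in the integrated inequality forces pointwise equality $\gamma$-a.e. (the pointwise inequality is one-sided), and pointwise equality in Cauchy--Schwarz means that the vectors $(\alpha_i^{-1/2})_i$ and $(\alpha_i^{1/2}\psi_i)_i$ are proportional. That is, $\alpha_i\psi_i$ is independent of $i$, which is the stated equality criterion.

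\textbf{Main obstacle.} The delicate step is justifying the differentiation in $s$ at $s=1$. This needs finiteness of $\int e^{-\alpha_i s\Phi_i}\,dm_i$ for $s$ near $1$, and enough moments of $\Phi_i$ under $\mu_i$ to differentiate under the integral. I would get this from $\Phi_i$ being bounded below, since evenness together with admissibility lets one normalize so that $\Phi_i\ge$ const. Alternatively one can use only one-sided difference quotients together with convexity of $\Lambda_i$ (Hölder). Another option is to assume the integrals are finite near $s=1$, as the existence of a maximizer with finite functional suggests.
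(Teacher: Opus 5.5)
Your proposal is correct and is essentially the paper's proof. The paper uses the same admissible scaling family (written as $\Phi_i(tx_i)/t^p$ with $s=t^{-p}$), reads off (\ref{firstorder}) and (\ref{secondorder}) from $g'(1)=0$ and $g''(1)\le 0$, and gets the variance bound and its equality case from the Lagrange-type identity (\ref{quadratic}), which is just the explicit form of your pointwise Cauchy--Schwarz step. Your worry about differentiating at $s=1$ is not addressed in the paper, and it resolves itself without extra hypotheses: each $\int e^{-\alpha_i s\Phi_i}\,dm_i$ lies in $(0,\infty]$ and their weighted product is bounded by the maximal value, so all are finite for every $s>0$, which gives smoothness of $\Lambda_i$ and finite moments of $\Phi_i$ under $\mu_i$.
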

\begin{proof}
    We observe that the functions $\{\Phi_{i,t}(x_i) = \frac{\Phi_i(tx_i)}{t^p}\}$ satisfy the same constraint
    $\sum_{i=1}^N \Phi_{i,t}(x_i) \ge c(x)$ for all $t>0$. Hence
    $f(t) = \mathcal{BS}_{\alpha,m}(\Phi_{1,t},\cdots,\Phi_{N,t}) \le \mathcal{BS}_{\alpha,m}(\Phi_{1},\cdots,\Phi_{N}) $. 
Applying the change of variables $tx_i=y_i$ and using the homogeneity of $m_i$, one gets
    $$
\mathcal{BS}_{\alpha,m}(\Phi_{1,t},\cdots,\Phi_{N,t})
= \prod_{i=1}^N \Bigl( \int e^{-\frac{\alpha_i}{t^p} \Phi_i(tx_i)} \rho_i(x_i)dx_i\Bigr)^{\frac{1}{\alpha_i}}
= \frac{1}{t^{\sum_{i=1}^N \frac{n+r_i}{\alpha_i}}}\prod_{i=1}^N \Bigl( \int e^{-\frac{\alpha_i}{t^p} \Phi_i(y_i)} \rho_i(y_i)dy_i\Bigr)^{\frac{1}{\alpha_i}}.
    $$
    Let $s = \frac{1}{t^p}$. The function 
    $$
g(s) = \log \mathcal{BS}_{\alpha,m}(\Phi_{1,s^{-\frac{1}{p}}},\cdots,\Phi_{N,s^{-\frac{1}{p}}}) = \sum_{i=1}^N \frac{1}{\alpha_i} \log \int e^{-\alpha_i s \Phi_i(y)} \rho_i(y)dy + \log s \sum_{i=1}^N \frac{n+r_i}{p\alpha_i}
    $$
    attains its maximum at $s=1$. Hence
    $$
g'(s) = \frac{1}{s}\sum_{i=1}^N \frac{n+r_i}{p\alpha_i} - 
\sum_{i=1}^N\frac{1}{Z_i}  \int \Phi_i e^{-\alpha_i s \Phi_i(y)} \rho_i(y)dy,
    $$
    $$
g''(s) = -\frac{1}{s^2}\sum_{i=1}^N \frac{n+r_i}{p\alpha_i} + 
\sum_{i=1}^N\alpha_i \bigl[\frac{1}{Z_i} {\int \Phi^2_i e^{-\alpha_i s \Phi_i(y)} \rho_i(y)dy}
- \Bigl( \frac{1}{Z_i} { \int \Phi_i e^{-\alpha_i s \Phi_i(y)} \rho_i(y)dy}\Bigr)^2\bigr],
    $$
    where $Z_i =\int e^{-\alpha_i s \Phi_i(y)} \rho_i(y)dy $.
    Finally, (\ref{firstorder}) follows from the first order condition $g'(1)=0$ and (\ref{secondorder}) follows from the second order condition $g''(1) \le 0$.

    Let us show the third inequality. Define for brevity
$\hat{\Phi}_i = 
\Phi_i - \int \Phi_i d\mu_i$. We apply the following identity:
\begin{equation}
    \label{quadratic}
\sum_{i=1}^N \alpha_i (\hat{\Phi}_i)^2
= \frac{1}{\sum_{i=1}^N \frac{1}{\alpha_i}}\bigl[  \bigl(\sum_{i=1}^N \hat{\Phi}_i\bigr)^2 + \frac{1}{\alpha_i\alpha_j}\sum_{i\ne j} (\alpha_i \hat{\Phi}_i - \alpha_j \hat{\Phi}_j )^2 \bigr].
\end{equation}
One has
$$
\sum_{i=1}^N \alpha_i \int(\hat{\Phi}_i)^2 d\mu_i = 
\int \sum_{i=1}^N \alpha_i (\hat{\Phi}_i)^2 d\gamma \ge 
\frac{1}{\sum_{i=1}^N \frac{1}{\alpha_i}}\int \bigl(\sum_{i=1}^N \tilde{\Phi}_i\bigr)^2 d\gamma= 
\frac{1}{\sum_{i=1}^N \frac{1}{\alpha_i}}\bigl[\int c^2  d\gamma
 - \bigl( \int c d\gamma \bigr)^2 \bigr].$$
 It is clear, that for the equality it is necessary and sufficient to have $\alpha_i \hat{\Phi}_i = \alpha_j \hat{\Phi}_j $ $\gamma$-a.e.
\end{proof}

Finally, the following theorem provides sufficient conditions for the homogeneity of an arbitrary maximizing tuple.

\begin{theorem}
\label{homogtheorem}
Let the measures $m_i$ satisfy the assumption (\ref{dens-hom}), and $c$ be $(p_1,\cdots,p_N)$-homogeneous. 
Consider the functional (\ref{BSF})
 with 
 $$
\alpha_i = \frac{n+r_i}{np_i}.
 $$
Let  $\{\Phi_i\}$ be a maximizing tuple of (\ref{BSF}).
Assume, in addition, that
\begin{itemize}
    \item the cost function $c$ is locally Lipschitz,
    \item every function $\Phi_j$ is convex and $\Phi_j(0)<\infty$.
\end{itemize}
 Then every function  $\Phi_j-\Phi_j(0)$ is $\beta_j$-homogeneous with 
$$
\beta_j = \bigl(\sum_{i=1}^N \frac{1}{\alpha_i}\bigr)\bigl(1+ \frac{r_j}{n}\bigr).
$$
\end{theorem}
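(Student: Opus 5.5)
The plan is to upgrade the one-parameter scaling argument of Lemma \ref{1and2order} into an \emph{Euler identity} $\langle x_j,\nabla\Phi_j(x_j)\rangle=\beta_j(\Phi_j(x_j)-\Phi_j(0))$ for every $j$. Integrating this identity along rays gives the claimed $\beta_j$-homogeneity of $\Phi_j-\Phi_j(0)$.

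First I record what the $(p_1,\cdots,p_N)$-homogeneity gives. For $t_1,\cdots,t_N>0$ and $\lambda=\prod_i t_i^{p_i}$, one has $c(t_1x_1,\cdots,t_Nx_N)=\lambda c(x)$. Hence the tuple $\{\lambda^{-1}\Phi_i(t_ix_i)\}$ is admissible. Changing variables and using (\ref{dens-hom}) together with $\alpha_i=\frac{n+r_i}{np_i}$, its value is $\prod_i t_i^{-np_i}\prod_i(\int e^{-\alpha_i\Phi_i/\lambda}dm_i)^{1/\alpha_i}$, which depends only on $\lambda$. So, up to the reparametrisation $s=1/\lambda$, it reduces to the function $g(s)$ of Lemma \ref{1and2order}, with $p$ replaced by the effective exponent $p=(\sum_i \beta_i/\alpha_i)/(\sum_i 1/\alpha_i)$ from (\ref{cons-p}). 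The value $\beta_j=(\sum_i 1/\alpha_i)(1+r_j/n)$ is then forced by (\ref{consist2}) and (\ref{cons-p}). Lemma \ref{1and2order} provides (\ref{firstorder}), the upper bound (\ref{secondorder}) on $\sum_i\alpha_i{\rm Var}_{\mu_i}(\Phi_i)$, and the bound on ${\rm Var}_\gamma(c)$.

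Second, I would prove a matching lower bound on each ${\rm Var}_{\mu_j}(\Phi_j)$ via a Cramér--Rao (integration by parts) argument. Since $\Phi_j$ is convex with $\Phi_j(0)<\infty$, it is locally Lipschitz in the interior of its domain. Set $\psi_j=\alpha_j\langle x,\nabla\Phi_j\rangle-(n+r_j)$. Integrating ${\rm div}(x\rho_j e^{-\alpha_j\Phi_j})$ and using $\langle x,\nabla\rho_j\rangle=r_j\rho_j$ gives $\int\psi_j\,d\mu_j=0$ and $\int\psi_j f\,d\mu_j=\int\langle x,\nabla f\rangle d\mu_j$. In particular ${\rm Cov}_{\mu_j}(\Phi_j,\psi_j)=\int\langle x,\nabla\Phi_j\rangle d\mu_j=\frac{n+r_j}{\alpha_j}$. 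Cauchy--Schwarz then yields
$$\Bigl(\frac{n+r_j}{\alpha_j}\Bigr)^2\le {\rm Var}_{\mu_j}(\Phi_j)\,{\rm Var}_{\mu_j}(\psi_j),$$
with equality iff $\psi_j$ is an affine function of $\Phi_j$, i.e. iff the Euler identity holds with some exponent.

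Third, I would bound ${\rm Var}_{\mu_j}(\psi_j)$ from above using the maximality in a richer family of perturbations. The idea is to differentiate the functional twice along the mixed dilations $\Phi_i(t_ix_i)/\lambda$ with $t_i$ no longer tied to one another. Admissibility is then kept by a correction term exploiting the symmetry hypothesis on $c$ and the convexity of $c$ in each variable, possibly through an auxiliary multiple $c$-Legendre transform as in section 2. These second variations involve $\psi_j$, and the second-order condition should give $\sum_j\alpha_j{\rm Var}(\psi_j)$-type upper bounds. Combining them with (\ref{secondorder}) and the Cramér--Rao inequality should make all inequalities equalities. Then $\psi_j=\kappa_j\hat\Phi_j$ $\mu_j$-a.e. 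Since $\rho_j>0$ a.e., this extends to the whole domain by continuity. The constant $\kappa_j$ is identified from (\ref{firstorder}), giving $\kappa_j=\alpha_j\beta_j$, i.e. the Euler identity with exponent $\beta_j$.

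The main obstacle is this third step: producing an upper bound on ${\rm Var}_{\mu_j}(\psi_j)$ that exactly closes the Cramér--Rao chain. The pure common dilation carries no information beyond Lemma \ref{1and2order}, so the argument must genuinely use per-variable perturbations and the convexity of $\Phi_j$. A Brascamp--Lieb inequality for $\mu_j$ is a natural tool here. If it does not close, my fallback would be to compare $\Phi_j$ with its $\beta_j$-homogenization $\inf_{t>0}t^{-\beta_j}(\Phi_j(tx)-\Phi_j(0))$. One would then show via the strict Hölder step in Theorem \ref{monotone} that replacing $\Phi_j$ by it strictly increases $\mathcal{BS}_{\alpha,m}$ unless the two coincide.
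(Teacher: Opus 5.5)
Your Steps 1--2 are fine as far as they go, but the proof does not close, and the gap is the step you flag yourself. Dilations with independent normalizations $\Phi_i(t_ix_i)/\lambda_i$ do not preserve the constraint $\sum_i V_i\ge c$ in general. The two-sided family that does ($\lambda_i=\prod_k t_k^{p_k}$ for all $i$) only reproduces Lemma \ref{1and2order}, as you observe. The proposed ``correction term'' would use the sign-symmetry and separate convexity of $c$ and the positivity of $\rho_j$, and none of these are hypotheses of Theorem \ref{homogtheorem}. The fallback fails as stated too: the homogenization $\inf_{t>0}t^{-\beta_j}(\Phi_j(tx)-\Phi_j(0))$ lies below $\Phi_j-\Phi_j(0)$, so substituting it can break admissibility. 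Step 2 also contains an unproved point. The identity $\int\psi_j\,d\mu_j=0$ requires the boundary term $\int_{\partial\Omega}\langle x,\nu\rangle e^{-\alpha_j\Phi_j}\rho_j\,d\mathcal{H}^{n-1}$, with $\Omega=\{\Phi_j<\infty\}$, to vanish. It does not vanish when $\Phi_j$ is the convex indicator of a body, so for a maximizer this must be derived.

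The missing idea is the transport structure of the maximizer, which is where $(p_1,\cdots,p_N)$-homogeneity is actually used. By Corollary \ref{maxim}, $\{\Phi_i\}$ solves the dual multimarginal problem for $c$ and $\{\mu_i\}$. Hence $\sum_i\Phi_i(x_i)=c(x)$ for $\gamma$-a.e. $x$, where $\gamma$ is an optimal plan. Since $\sum_i\Phi_i-c\ge 0$ everywhere, the map $t\mapsto\Phi_i(tx_i)-t^{p_i}c(x)$ is minimal at $t=1$, which gives $\langle x_i,\nabla\Phi_i(x_i)\rangle=p_i\,c(x)$ $\gamma$-a.e. Integrating and using $\int c\,d\gamma=n$ from (\ref{firstorder}) gives $\int\langle x_i,\nabla\Phi_i\rangle d\mu_i=\frac{n+r_i}{\alpha_i}$, which kills the boundary term. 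It also couples all your scores to a single function: $\psi_i=\frac{n+r_i}{n}(c-n)$ $\gamma$-a.e. Summing $\int\psi_j\Phi_j\,d\mu_j=\frac{n+r_j}{\alpha_j}$ over $j$ then gives ${\rm Var}_\gamma(c)=n\sum_i\frac1{\alpha_i}$ exactly. The paper inserts this into the third inequality of Lemma \ref{1and2order} to get $\sum_i\alpha_i{\rm Var}_{\mu_i}(\Phi_i)\ge n$, so (\ref{secondorder}) is an equality and $\alpha_i\hat\Phi_i=\alpha_j\hat\Phi_j$ $\gamma$-a.e. Combined with $c=\sum_i\Phi_i=\frac1{p_j}\langle x_j,\nabla\Phi_j\rangle$, this is the Euler identity with exponent $\beta_j$. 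Your Cram\'er--Rao route would also close with this input. The identity ${\rm Var}_{\mu_j}(\psi_j)=\bigl(\frac{n+r_j}{n}\bigr)^2{\rm Var}_\gamma(c)$ turns your inequality into $\alpha_j{\rm Var}_{\mu_j}(\Phi_j)\ge \frac{n}{\alpha_j\sum_i 1/\alpha_i}$. These bounds sum to $n$, so (\ref{secondorder}) forces equality in every Cauchy--Schwarz step.
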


\begin{remark}
     Here we don't recover the result from \cite{NT} on the homogeneity of the maximizers for $c_{bar}$, because the barycentric cost function is  not $(p_1, \cdots,p_N)$-homogeneous. 
\end{remark}

\begin{proof}
By Corollary \ref{maxim} the tuple $\{\Phi_i\}$ is a solution to the dual multimarginal transportation problem for $c$ and 
$$
\mu_i = \frac{1}{Z_i}{e^{-\alpha_i \Phi_i}\rho_i dx_i},$$ 
where $Z_i = {\int e^{-\alpha_i \Phi_i} \rho_i dx_i}
$. In particular,
$\sum_{i=1}^N \Phi_i(x_i) =c(x)$
for $\gamma$-almost all $x=(x_1,\cdots,x_N)$, where $\gamma$ is a solution to the primal transportation problem.
In addition, since $
\sum_{i=1}^N \Phi_i(x_i) \ge c(x)
$ for all $x$, it follows from the regularity assumptions on $c$ and $\Phi_i$ that 
$$
\nabla \Phi_i(x_i) = \nabla_{x_i} c(x)
$$
$\gamma$-a.e. Taking the scalar product of the latter identity, we get
\begin{equation}
\label{cost(xi)}
\langle  \nabla \Phi_i(x_i),x_i \rangle  = \langle  \nabla_{x_i} c(x),x_i \rangle = p_i c(x). 
\end{equation}

According to Lemma \ref{1and2order}
$
 \int c d\gamma =\frac{1}{\sum_{i=1}p_i} \sum_{i=1}^N \frac{n+r_i}{\alpha_i}=n.
$
Applying (\ref{cost(xi)}), we get, in particular,
$$
n = \int c d\gamma = \frac{1}{p_i} \int \langle x_i, \nabla \Phi_i(x_i)\rangle d \mu_i = \frac{1}{p_i Z_i}
\int_{\mathbb{R}^n} \langle x_i, \nabla \Phi_i(x_i)\rangle e^{-\alpha_i \Phi_i} \rho_i dx_i.
$$
Without loss of generality, we assume that $\Phi_i$ is lower-semicontinuous and apply the following integration by parts formula 
$$
\int_{\mathbb{R}^n} {\rm{div}}(v) e^{-\alpha_i \Phi_i} dx_i = \alpha_i \int_{\Omega} \langle v, \nabla \Phi_i \rangle e^{-\alpha_i \Phi_i} dx_i + \int_{\partial \Omega} \langle v, \nu\rangle e^{-\alpha_i \Phi_i} d \mathcal{H}^{n-1}.
$$
Here $\Omega = \{ \Phi_i < \infty\}$, $\nu$ is the outer normal vector on $\partial {\Omega}$, $\mathcal{H}^{n-1}$ is the $n-1$-dimensional Haussdorff measure on $\partial \Omega$, and $v$ is a sufficiently regular vector field.

In particular, we obtain
$$
n = \frac{1}{p_i\alpha_i Z_i}\bigl[  \int {\rm{div}}\bigl(\rho_i\cdot x_i \bigr) e^{-\alpha_i \Phi_i} dx_i  - \int_{\partial \Omega} \langle x_i, \nu \rangle e^{-\alpha_i \Phi_i} \rho_i d\mathcal{H}^{n-1} \bigr].
$$
Using that ${\rm{div}}\bigl(\rho_i\cdot x_i \bigr)  = (n+r_i) \rho_i$, one gets
$
n = \frac{n+r_i}{p_i\alpha_i} - \frac{1}{p_i\alpha_i Z_i}
{ \int_{\partial \Omega} \langle x_i, \nu \rangle e^{-\alpha_i \Phi_i} \rho_i d\mathcal{H}^{n-1}}.
$
Since $\frac{n+r_i}{p_i\alpha_i}  = n$, one finally gets
\begin{equation}
    \label{vanishingdpo}
\int_{\partial \Omega} \langle x_i, \nu \rangle e^{-\alpha_i \Phi_i} \rho_i d\mathcal{H}^{n-1} =0.
\end{equation}
Next, we observe that $\langle x_i, \nu \rangle \ge 0$ because  $\Omega$ is convex as a sublevel set of the convex function $\Phi_i$ and contains the origin.
This implies that either $\partial \Omega$ is empty or $\Phi_i|_{\partial \Omega}=+\infty$ $\mathcal{H}^{n-1}$-a.e. (equivalently, $\lim_{x \to y, x\in {\rm Int}(\Omega)} \Phi_i(x)=+\infty$ for 
$\mathcal{H}^{n-1}$-almost all $y \in \partial \Omega$).

Finally, one gets that the following integration by parts formula is valid:
 \begin{equation}
 \label{ibppdo}
\int_{\mathbb{R}^n} {\rm div}(v) e^{-\alpha_i \Phi_i}dx_i
= \alpha_i\int_{\mathbb{R}^n} \langle v, \nabla \Phi_i\rangle e^{-\alpha_i \Phi_i}dx_i.
    \end{equation}
Clearly, the formula works for vector fields that are sufficiently regular and integrable.

Next, we apply another result of Lemma \ref{1and2order}:
\begin{equation}
    \label{0111}
\sum_{i=1}^N \alpha_i {\rm Var}_{\mu_i} (\Phi_i) \le \frac{1}{\sum_{i=1}p_i} \sum_{i=1}^N \frac{n+r_i}{\alpha_i}=n.
\end{equation}

 Using that $c = \sum_{i=1}^N \Phi_i = \langle x_i,\nabla \Phi_i(x_i)\rangle$
 $\gamma$-a.e. and the integration by parts formula (\ref{ibppdo}), we compute $\int c^2 d\gamma$:
\begin{align*}
 \int c^2 d\gamma & = \sum_{i=1}^n \int c \Phi_i d\gamma = 
  \sum_{i=1}^n \frac{1}{p_i}\int  \langle x_i \nabla \Phi_i \rangle \Phi_i d\mu_i = \sum_{i=1}^N\frac{1}{p_i\alpha_i Z_i} {\int {\rm div}(x_i \cdot \Phi_i \rho_i) e^{-\alpha_i \Phi_i} dx_i}
  \\& = \sum_{i=1}^N \bigl[ \frac{n+r_i}{p_i\alpha_i} \int \Phi_i d\mu_i + \frac{1}{p_i\alpha_i} \int \langle x_i,\nabla \Phi_i \rangle d\mu_i\bigr]
  = n \sum_{i=1}^N \int \Phi_i d\mu_i
  + \Bigl(\sum_{i=1}^n \frac{1}{\alpha_i}\Bigr) \int cd\gamma \\& = n^2 + \Bigl(\sum_{i=1}^n \frac{1}{\alpha_i}\Bigr) n.
\end{align*}

Thus, we get
$
\int c^2  d\gamma
 - \bigl( \int c d\gamma \bigr)^2 = \bigl(\sum_{i=1}^n \frac{1}{\alpha_i}\bigr)n.
$
Hence, one gets from (\ref{quadratic}) that
$\sum_{i=1}^N \alpha_i {\rm Var}_{\mu_i}(\Phi_i) \ge n$, Finally, we get by the opposite inequality (\ref{0111}) that 
$\sum_{i=1}^N \alpha_i {\rm Var}_{\mu_i}(\Phi_i) = n$. This can be  possible only if
$\alpha_i (\Phi_i(x_i) -\int \Phi_i d\mu_i) = \alpha_j (\Phi_j(x_j) -\int \Phi_j d\mu_j)$ $\gamma$-a.e. Since
$$
c(x) = \sum_{i=1}^N \Phi_i(x_i) = \frac{1}{p_j} \langle x_j, \nabla \Phi_j(x_j)\rangle, 
$$
we obtain that for $\mu_j$-almost all $x_j$ one has
$$
\frac{1}{p_j} \langle x_j, \nabla \Phi_j(x_j)\rangle = \Phi_j(x_j) \bigl( 1  + \sum_{i \ne j} \frac{\alpha_j}{\alpha_i}\bigr) + C_j
$$
for some constant $C_j$. 
Taking $x_j=0$, we get that $C_j = - \Phi_j(0) \bigl( 1  + \sum_{i \ne j} \frac{\alpha_j}{\alpha_i}\bigr)$. Hence
$$
\Phi_j(x_j) - \Phi_j(0)= \frac{1}{\sum_{i=1}^N \frac{1}{\alpha_i}}\frac{1}{\alpha_j p_j} \langle x_j, \nabla \Phi_j(x_j)\rangle =  \frac{1}{\sum_{i=1}^N \frac{1}{\alpha_i}}\frac{n}{n+r_j} \langle x_j, \nabla \Phi_j(x_j)\rangle.
$$
This completes the proof.
\end{proof}

\begin{corollary}
\label{maincorollary}
    Let  the measures $m_i$ satisfy the assumption (\ref{dens-hom}) and the  densities $\rho_i$ are positive almost everywhere.
Consider the functional  (\ref{BSF}) on the set of even functions
and   let $\{\Phi_i\}$ be a maximizing tuple. Assume that
\begin{enumerate} 
\item 
$c$ is a continuous $(p_1,\cdots,p_N)$-homogeneous function for some numbers $p_i>0$
\item for every 
$1 \le i \le N$, there exist numbers 
$s_j \in \{-1,1\}$; and $j \ne i$, such that 
$$
c(x_1, \cdots, x_{i-1}, - x_i, x_{i+1},\cdots,x_N) = 
c(s_1x_1, \cdots,  s_{i-1}x_{i-1}, x_i,  s_{i+1}x_{i+1},\cdots, s_N x_N) 
$$
\item 
  $c$ is  convex with respect to every variable, i.e., for all $i \in [1,\cdots,N]$ and $x_j \in \mathbb{R}^n$, $j \ne i$, the mapping
  $$
x \to c(x_1, \cdots, x_{i-1},x, x_{i+1}, x_N)
$$ is a convex function.
\end{enumerate}
Then the conclusion of Theorem \ref{homogtheorem} holds.
\end{corollary}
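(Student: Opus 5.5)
The plan is to reduce Corollary \ref{maincorollary} to Theorem \ref{homogtheorem}. Given a maximizing tuple $\{\Phi_i\}$ of even functions, I will replace it by a tuple of convex, even, locally finite functions that is still maximizing and differs from the original only on a null set. Theorem \ref{homogtheorem} then applies; the local Lipschitz hypothesis on $c$ comes from its convexity in each variable together with continuity.

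\textbf{Step 1 (multiple $c$-transform).} I will replace each $\Phi_i$ successively by
$$\tilde\Phi_i(x_i)=\sup_{x_j,\, j\neq i}\Bigl(c(x)-\sum_{j\ne i}\Phi_j(x_j)\Bigr).$$
Admissibility is preserved and $\tilde\Phi_i\le\Phi_i$, so $\mathcal{BS}_{\alpha,m}$ does not decrease. Since $\{\Phi_i\}$ is a maximizer and $\rho_i>0$ a.e., we get $\tilde\Phi_i=\Phi_i$ almost everywhere. The new function $\tilde\Phi_i$ is a supremum of functions convex in $x_i$ (hypothesis (3)), so it is convex and lower semicontinuous.

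\textbf{Step 2 (evenness).} Evenness of $\tilde\Phi_i$ follows from hypothesis (2) and the evenness of the $\Phi_j$. Replacing $x_i$ by $-x_i$ in the supremum and substituting $x_j\mapsto s_jx_j$ leaves $\sum_{j\neq i}\Phi_j(x_j)$ unchanged, so $\tilde\Phi_i(-x_i)=\tilde\Phi_i(x_i)$. After doing this for all $i$ we have an even, convex, admissible maximizing tuple.

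\textbf{Step 3 (finiteness at the origin).} I need $\tilde\Phi_i(0)<\infty$. Since $\tilde\Phi_i$ is even and convex, $\tilde\Phi_i(0)\le\frac12(\tilde\Phi_i(x)+\tilde\Phi_i(-x))=\tilde\Phi_i(x)$. Integrability of $e^{-\alpha_i\tilde\Phi_i}$ with positive value (needed for a maximizer with nonzero functional) gives a point where $\tilde\Phi_i$ is finite, hence $\tilde\Phi_i(0)<\infty$. Also $c(\dots,0,\dots)=0$ by homogeneity with $p_i>0$, which fits consistently.

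\textbf{Step 4 (applying Theorem \ref{homogtheorem}).} The remaining hypotheses of Theorem \ref{homogtheorem} concern the transport step (Corollary \ref{maxim}) and the a.e. identity $\nabla\Phi_i=\nabla_{x_i}c$. For the latter, the relevant differentiability comes from convexity of $\Phi_i$ on the interior of its domain and from convexity of $c$ in $x_i$, which makes $c$ locally Lipschitz in each variable. This is where I expect the main obstacle: Theorem \ref{homogtheorem} is stated with ``locally Lipschitz'' $c$, and I must check that separate convexity plus continuity suffices for the a.e. first-order condition. I would handle this by noting that at $\gamma$-a.e. point $x_i$ is a point of differentiability of $\Phi_i$ (since $\mu_i$ is absolutely continuous). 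At such a point, $x_i\mapsto\sum_j\Phi_j-c$ attains a minimum, and $c(\cdot)$ is convex in $x_i$. Hence $-c$ is concave with a touching differentiable majorant, which forces $c$ to be differentiable in $x_i$ there with matching gradient. Euler's identity for the $p_i$-homogeneity then holds, and the rest of the proof of Theorem \ref{homogtheorem} goes through verbatim. This yields the $\beta_j$-homogeneity of $\tilde\Phi_j-\tilde\Phi_j(0)$, and hence of the original maximizers up to null sets.
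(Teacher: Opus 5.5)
Your proposal is correct and follows the paper's route: replace each $\Phi_i$ by its multiple $c$-transform, which is convex by (3) and even by (2), so $\Phi_i(0)=\min\Phi_i<\infty$ and Theorem \ref{homogtheorem} applies. You also add care the paper omits (equality $\tilde\Phi_i=\Phi_i$ holds only almost everywhere, $c$ is locally Lipschitz by separate convexity, and a touching argument gives $\nabla\Phi_i=\nabla_{x_i}c$ $\gamma$-a.e.), though in that last argument the correct phrasing is that the convex function $c(\dots,\cdot,\dots)$ has the touching majorant $\Phi_i+\mathrm{const}$, differentiable at $x_i$, so $-c$ has a touching differentiable \emph{minorant}, since a concave function with a touching differentiable majorant need not be differentiable.
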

\begin{proof}
We have to prove that every $\Phi_i$ is convex (together with the symmetry assumption, this clearly implies that $\Phi_i(0)=\min \Phi_i < +\infty$). 
    Every  $\Phi_i$ must satisfy
    $$
\Phi_i(x_i) =\tilde{\Phi}_i(x_i) =\sup_{x_j, j \ne i} \bigl( c(x) - \sum_{j\ne i} \Phi_j(x_j)\bigr),
    $$
otherwise, replacing $\Phi_i$ with $\tilde{\Phi}_i$ will increase the value of the functional.
We observe that $\tilde{\Phi}_i$ is even, provided all $\Phi_j$ with $j \ne i$, are even.
Indeed, this follows immediately from the assumption (2) and the evenness of $\Phi_j$, $j \ne i$.
    Clearly,     $\Phi_i$ is convex as a supremum of convex functions. 
\end{proof}

\begin{remark}
    Corollary \ref{maincorollary} implies, in particular, Theorem B from \cite{CKLR} on the  homogeneity of the maximizers of the functional
  $$
\mathcal{F}_{p,\Psi}(V) = \Bigl( \int_{\mathbb{R}^n} e^{-V} dx\Bigr)^{\frac{1}{p}} 
\Bigl( \int_{\mathbb{R}^n} e^{-\frac{1}
{p-1}V^*} \det D^2 \Psi^*dy\Bigr)^{\frac{1}{q}}.
$$
Here $\Psi$ is a given convex $p$-homogeneous function. The functional is considered on the set of even convex functions.  The proof from \cite{CKLR} relies on the application of the maximum principle for the corresponding Monge--Amp\`ere equation.
\end{remark}

\begin{remark}
    The convexity assumptions of Corollary \ref{maincorollary}
 and Theorem \ref{homogtheorem} seem to be too strong. Eventually, the  conclusion of  Theorem \ref{homogtheorem} holds if the functions $\Phi_i$ have the following property: every function $f_i(t) = \Phi_i(tx)$
 is increasing on $[0,+\infty)$ for all $x$. Equivalently, the sublevel sets of $\Phi_i$ are starshaped (star sets). We don't prove any results of this type here.
\end{remark}

In the rest of the section, we discuss
how to provide an equivalent formulation of our problem on $\mathbb{S}^{n-1}$. Under the assumptions of Corollary \ref{maincorollary}
 (or Theorem \ref{homogtheorem}),
the maximization problem for  $\mathcal{BS}_{\alpha,m}$ can have only a homogeneous solution $\{\Phi_i\}$ (up to the addition of constants). 
Without loss of generality, we assume  that our maximizing tuple satisfies
$$
\Phi_i(0)=0
$$
for all $1 \le i \le N$. 
Let us represent every $\Phi_i$ in the form
$$
\Phi_i(t \theta_i) = \frac{p_i}{\beta_i} t^{\beta_i}\varphi^{\beta_i}_i(\theta_i
$$
for some positive function $\varphi_i$ on $\mathbb{S}^{n-1}$.

\begin{lemma}\label{homogadmissible}
    Admissibility of the tuple $\{\Phi_i\}$ is equivalent to the property
$$
c(\theta_1, \cdots,\theta_N) \le \prod_{i=1}^N \varphi_i^{p_i}(\theta_i).
$$
\end{lemma}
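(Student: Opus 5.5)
The plan is to reduce admissibility to a one-parameter family of inequalities along rays and then apply the weighted arithmetic--geometric mean inequality. First I would record the key numerical identity coming from the choice of exponents. Put $S=\sum_{k=1}^N \frac{1}{\alpha_k}$ and $\lambda_i = \frac{p_i}{\beta_i}$. Since $\frac{1}{\alpha_i}=\frac{np_i}{n+r_i}$ and $\beta_i = S\,\frac{n+r_i}{n}$, we get $\lambda_i = \frac{1}{\alpha_i S}$. Hence
$$
\lambda_i>0,\qquad \sum_{i=1}^N \lambda_i = 1 .
$$
This is exactly the consistency condition (\ref{cons-p}) in disguise, and it is what makes the weighted AM--GM inequality applicable with equality attainable.

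Next, I would write an arbitrary point as $x_i=t_i\theta_i$ with $t_i\ge 0$ and $\theta_i\in\mathbb{S}^{n-1}$. By $(p_1,\cdots,p_N)$-homogeneity, $c(x)=\prod_i t_i^{p_i}\,c(\theta_1,\cdots,\theta_N)$. In particular $c(x)=0$ whenever some $t_i=0$, because every $p_i>0$. Those points are therefore harmless, since $\Phi_i\ge 0$ with $\Phi_i(0)=0$. Admissibility then becomes: for all $\theta$ and all $t_i>0$,
$$
\sum_{i=1}^N \lambda_i (t_i\varphi_i(\theta_i))^{\beta_i} \ge \prod_{i=1}^N t_i^{p_i}\, c(\theta).
$$
Now substitute $u_i=(t_i\varphi_i(\theta_i))^{\beta_i}$. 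Then $t_i^{p_i}=u_i^{\lambda_i}\varphi_i^{-p_i}(\theta_i)$, and the condition reads
$$
\sum_i \lambda_i u_i \ \ge\ \frac{c(\theta)}{\prod_i\varphi_i^{p_i}(\theta_i)}\prod_i u_i^{\lambda_i}
\qquad\text{for all } u_i>0 .
$$
As $t_i$ ranges over $(0,\infty)$, $u_i$ ranges over all of $(0,\infty)$.

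For sufficiency, suppose $c(\theta)\le\prod_i\varphi_i^{p_i}(\theta_i)$. If $c(\theta)\le 0$ there is nothing to prove. Otherwise the right-hand side is at most $\prod_i u_i^{\lambda_i}$, which is bounded by $\sum_i\lambda_i u_i$ by weighted AM--GM, using $\sum_i\lambda_i=1$. For necessity, take all $u_i=1$, i.e. $t_i=\varphi_i(\theta_i)^{-1}$. This gives $1\ge c(\theta)/\prod_i\varphi_i^{p_i}(\theta_i)$.

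The only genuine care needed, and the main (mild) obstacle, is the degenerate values of $\varphi_i$. On one side $\varphi_i(\theta_i)=+\infty$, i.e. $\Phi_i=+\infty$ on the ray, and on the other $\varphi_i(\theta_i)=0$. I would handle these with the conventions $\infty\cdot a=\infty$ for $a>0$. When some $\varphi_i=\infty$, the admissibility inequality holds trivially on that ray for $t_i>0$, and the spherical inequality holds trivially too. When some $\varphi_i=0$ and the others are finite, letting $t_i\to\infty$ shows that admissibility forces $c(\theta)\le 0$, which matches the spherical condition.
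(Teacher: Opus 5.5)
Your proposal is correct and follows essentially the same argument as the paper. Sufficiency is weighted AM--GM (what the paper calls H\"older) with weights $p_i/\beta_i$ summing to one, and necessity comes from evaluating admissibility at $x_i=\theta_i/\varphi_i(\theta_i)$. Your write-up additionally makes explicit the identity $\sum_i p_i/\beta_i=1$, and treats the points with some $x_i=0$ and the degenerate values of $\varphi_i$; the paper leaves all of these implicit.
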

\begin{proof}
    Assume that $c(\theta_1, \cdots,\theta_N) \le \prod_{i=1}^N \varphi_i^{p_i}(\theta_i)$, equivalently
    $
c\bigl(\frac{\theta_1}{\varphi_1(\theta_1)}, \cdots,\frac{\theta_N}{\varphi_N(\theta_N)}\bigr) \le 1.
    $
We get by the H{\"o}lder inequality
$$c(x) = \prod_{i=1}^N r_i^{p_i} \cdot c(\theta_1, \cdots,\theta_N) \le 
\prod_{i=1}^N \varphi_i^{p_i}(\theta_i) r_i^{p_i}  \le  \sum_{i=1}^N
 \frac{p_i}{\beta_i} \varphi_i^{\beta_i}(\theta_i) r_i^{\beta_i} = \sum_{i=1}^N \Phi_i(x_i).$$ Thus, the tuple $\{\Phi_i\}$ is admissible.
 
If $\{\Phi_i\}$ is admissible, then
$
c\bigl(\frac{\theta_1}{\varphi_1(\theta_1)}, \cdots,\frac{\theta_N}{\varphi_N(\theta_N)}\bigr) \le  \sum_{i=1}^N
 \frac{p_i}{\beta_i} \varphi_i^{\beta_i}(\theta_i) \bigl(  \frac{1}{\varphi_i(\theta_i)}\bigr)^{\beta_i}
 = \sum_{i=1}^N \frac{p_i}{\beta_i}=1.
$
The proof is complete.
\end{proof}
 
The functional 
 $\mathcal{BS}_{\alpha,m}$ restricted to homogeneous functions $\{V_i\}$ can be rewritten in terms of functions $\{\varphi_i\}$:
 \begin{align*}
    \mathcal{BS}_{\alpha,m}(V_1,\cdots,V_N) & = \prod_{i=1}^N \bigl( \int_{\mathbb{S}^{n-1}} e^{-{\alpha_i} V_i(x_i)} \rho_i(x_i)dx_i\bigr)^{\frac{1}{\alpha_i}}
    \\& =  \prod_{i=1}^N \Bigl( \int_{\mathbb{S}^{n-1}} \int_0^{\infty} e^{-\frac{\alpha_ip_i}{\beta_i} \varphi_i^{\beta_i}(\theta_i) t^{\beta_i}_i} \rho_i(\theta_i) t_i^{n-1+ r_i}dt_i d\theta_i\Bigr)^{\frac{1}{\alpha_i}}.
 \end{align*}
Making the change of variables 
$s_i = t_i \varphi_i(\theta_i)$ we conclude that 
$$
\mathcal{BS}_{\alpha,m}(V_1,\cdots,V_N) = C  \prod_{i=1}^N \Bigl( \int_{\mathbb{S}^{n-1}} \frac{\rho_i(\theta_i) 
d\theta_i}{\varphi^{n+r_i}_i(\theta_i)}\Bigr)^{\frac{1}{\alpha_i}},
$$
where $C$ is a constant, depending on parameters $\alpha_i,p_i,r_i,n$.

In conclusion, we obtain the following statement.

\begin{proposition} \label{1002} Under the above assumptions, the problem of maximizing $\mathcal{BS}_{\alpha,m}$ is equivalent to the maximization problem of the spherical Blaschke--Santal{\'o} type functional
\begin{equation}
\label{BSSn-1}
\prod_{i=1}^N \Bigl( \int_{\mathbb{S}^{n-1}} \frac{\rho_i(\theta_i)d\theta_i}{\varphi^{n+r_i}_i(\theta_i)}\Bigr)^{\frac{1}{\alpha_i}}
\end{equation}
under constraints
\begin{equation}
\label{product-constr}
c(\theta_1, \cdots,\theta_N) \le \prod_{i=1}^N \varphi_i^{p_i}(\theta_i).
\end{equation}
\end{proposition}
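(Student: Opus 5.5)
The plan is to combine Corollary \ref{maincorollary} with Lemma \ref{homogadmissible} and the spherical computation carried out just before the statement. We prove the equivalence in two directions: a maximizer in one problem produces a competitor of equal value in the other, with the value of $\mathcal{BS}_{\alpha,m}$ on homogeneous tuples equal to $C$ times (\ref{BSSn-1}).

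\textbf{Step 1: reduction to homogeneous tuples.} Under the assumptions of Corollary \ref{maincorollary}, any maximizing tuple $\{\Phi_i\}$ of $\mathcal{BS}_{\alpha,m}$ over even admissible tuples has the following form after subtracting the constants $\Phi_i(0)$. Each $\Phi_i$ is $\beta_i$-homogeneous, with $\beta_i=\bigl(\sum_k \frac{1}{\alpha_k}\bigr)(1+\frac{r_i}{n})$.

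We check that subtracting constants does not spoil admissibility or optimality. Since $c(0)=0$ and $\Phi_i(0)=\min\Phi_i$, we get $\sum_i\Phi_i(0)\ge c(0)=0$. Subtracting $\Phi_i(0)$ therefore only multiplies the functional by $e^{\sum_i\Phi_i(0)}\ge 1$. By maximality this forces $\sum_i\Phi_i(0)=0$, and then $\{\Phi_i-\Phi_i(0)\}$ is again an admissible maximizer. Admissibility holds because the corrected tuple is the maximizer shifted by constants summing to zero.

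Consequently
\[
\sup_{\text{even admissible}}\mathcal{BS}_{\alpha,m}=\sup_{\text{even, admissible, }\beta_i\text{-homogeneous}}\mathcal{BS}_{\alpha,m}.
\]
We also check that $\sum_i p_i/\beta_i=1$, which Lemma \ref{homogadmissible} uses. Indeed, $p_i/\beta_i=\frac{n+r_i}{n\alpha_i}\cdot\frac{n}{(n+r_i)\sum_k 1/\alpha_k}$, and these terms sum to $1$.

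\textbf{Step 2: dictionary between the two problems.} Write a $\beta_i$-homogeneous $V_i$ as $V_i(t\theta)=\frac{p_i}{\beta_i}t^{\beta_i}\varphi_i^{\beta_i}(\theta)$, with $\varphi_i>0$ on $\mathbb{S}^{n-1}$ (values $\varphi_i=+\infty$ are allowed). This gives a bijection between positive functions $\varphi_i$ and homogeneous $V_i$, and evenness of $V_i$ matches evenness of $\varphi_i$.

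By Lemma \ref{homogadmissible}, admissibility of $\{V_i\}$ is exactly the constraint (\ref{product-constr}). By the polar-coordinate computation preceding the proposition, using $\rho_i(t\theta)=t^{r_i}\rho_i(\theta)$ and the substitution $s_i=t_i\varphi_i(\theta_i)$, we have $\mathcal{BS}_{\alpha,m}(V)=C\cdot(\ref{BSSn-1})$. The constant $C=\prod_i\bigl(\int_0^\infty e^{-\frac{\alpha_ip_i}{\beta_i}s^{\beta_i}}s^{n-1+r_i}ds\bigr)^{1/\alpha_i}$ does not depend on the $\varphi_i$.

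Hence the two suprema agree up to the factor $C$, and maximizers correspond. A maximizer $\{\Phi_i\}$ yields a maximizer $\{\varphi_i\}$ of (\ref{BSSn-1}) under (\ref{product-constr}). Conversely, a maximizing $\{\varphi_i\}$ gives a homogeneous admissible tuple attaining $\sup\mathcal{BS}_{\alpha,m}$.

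\textbf{Main obstacle.} The delicate point is existence. Step 1 uses a maximizer of $\mathcal{BS}_{\alpha,m}$, since homogeneity was proved only for maximizers. The statement should therefore be read as: whenever either problem has a maximizer, so does the other, and the optimal values differ by the factor $C$.

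If the supremum of $\mathcal{BS}_{\alpha,m}$ is not attained, the inequality ``$\sup$ over homogeneous tuples $\le\sup$ over all tuples'' is trivial. The reverse inequality would require an existence or compactness argument, for instance Blaschke-type selection on the convex sublevel sets, which I would not attempt here and would state as a standing assumption.

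A minor check remains: the correspondence is also consistent with evenness on the spherical side. Assumption (2) on $c$ together with even $\varphi_j$ allows each $\varphi_i$ to be replaced by an even one, by the same $c$-transform argument as in Corollary \ref{maincorollary}.
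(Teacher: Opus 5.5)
Your proposal is correct and follows the paper's own route. It uses homogeneity of maximizers from Corollary \ref{maincorollary}, Lemma \ref{homogadmissible} to turn admissibility into the product constraint, and the polar-coordinate substitution $s_i=t_i\varphi_i(\theta_i)$ producing the $\varphi$-independent constant $C$. Your caveat that the equivalence presupposes a maximizer exists is implicit in the paper too.

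One small repair is needed in Step 1. You invoke maximality to force $\sum_i\Phi_i(0)=0$ before knowing that $\{\Phi_i-\Phi_i(0)\}$ is admissible, which is circular. Prove admissibility directly first: replace $x_i$ by $t^{1/\beta_i}x_i$ and use $\sum_i p_i/\beta_i=1$ to get $t\sum_i(\Phi_i-\Phi_i(0))(x_i)+\sum_i\Phi_i(0)\ge t\,c(x)$, then let $t\to\infty$. Alternatively, the $c$-transform identity from the proof of Corollary \ref{maincorollary} gives $\Phi_1(0)=-\sum_{j\ne1}\Phi_j(0)$ at once.
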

 Remark that the latter assumption can be rewritten in the additive (transportation-type) form:
$$
\log c(\theta_1, \cdots,\theta_N) \le \sum_{i=1}^N p_i \log \varphi_i(\theta_i),
$$
which makes this spherical problem more similar to the initial problem. See \cite{Kolesnikov2018}, \cite{FGSZ}, and the reference therein for the study of the logarithmic cost function 
$$
\log\langle x, y \rangle
$$
in the case $N=2$.

Finally, the following Theorem 
generalizes Theorem \ref{monotone} and Corollary \ref{maxim}.
\begin{theorem}
\label{BSsn-12}
    Let $c \ge 0$ be a sufficiently regular cost function on $(\mathbb{S}^{n-1})^N$, $\varphi_i$ be nonnegative functions satisfying (\ref{product-constr}) and, in addition, $\int_{\mathbb{S}^{n-1}} \frac{\rho_i(\theta_i)d\theta_i}{\varphi^{n+r_i}_i(\theta_i)} < \infty$ for all $1 \le i \le N$. Consider the mass transportation problem with the cost function
    $\log c$ and probability measures $$\mu_i = \frac{1}{Z_i} \frac{\rho_i(\theta_i)d\theta_i}{\varphi^{n+r_i}_i(\theta_i)},$$
    where $Z_i = {\int_{\mathbb{S}^{n-1}} \frac{\rho_i(\theta_i)d\theta_i}{\varphi^{n+r_i}_i(\theta_i)}} $ are the normalizing constants.
    Let $\{u_i\}, 1 \le i \le N$ be a solution to the corresponding dual problem.
    Define $\psi_i = e^{\frac{1}{p_i} u_i}$. If no duality gap for the cost $\log c$ holds, the following inequality is true
    $$
    \prod_{i=1}^N \Bigl( \int_{\mathbb{S}^{n-1}} \frac{\rho_i(\theta_i)d\theta_i}{\varphi^{n+r_i}_i(\theta_i)}\Bigr)^{\frac{1}{\alpha_i}}
\le 
\prod_{i=1}^N \Bigl( \int_{\mathbb{S}^{n-1}} \frac{\rho_i(\theta_i)d\theta_i}{\psi^{n+r_i}_i(\theta_i)}\Bigr)^{\frac{1}{\alpha_i}}.
    $$
    In particular, if $\{\varphi_i\}$ is a  maximum point for (\ref{BSSn-1}), then  $\varphi_i = c_i e^{\frac{1}{p_i}u_i}$, where the constants $c_i>0$ satisfy $\prod_{i=1}^N c_i=1$.
\end{theorem}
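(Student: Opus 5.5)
The argument copies the proof of Theorem \ref{monotone}, with the product constraint (\ref{product-constr}) turned into an additive one by taking logarithms. Let $\gamma$ be an optimal plan for the maximization problem with cost $\log c$ and marginals $\mu_i$, and let $\{u_i\}$ be the dual solution. Complementary slackness together with the absence of a duality gap gives $\sum_i u_i(\theta_i) = \log c(\theta)$ for $\gamma$-a.e. $\theta=(\theta_1,\cdots,\theta_N)$. The tuple $\{\log\varphi_i^{p_i}\}$ is admissible for the dual problem, since $\log c \le \sum_i p_i\log\varphi_i$. Hence, $\gamma$-a.e.,
$$
\sum_{i=1}^N p_i\log\psi_i(\theta_i)=\sum_{i=1}^N u_i(\theta_i) = \log c(\theta) \le \sum_{i=1}^N p_i \log\varphi_i(\theta_i),
$$
that is, $1\le \prod_i (\varphi_i/\psi_i)^{p_i}$ on the support of $\gamma$. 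We also note that $\{\psi_i\}$ satisfies (\ref{product-constr}), because $\sum_i u_i\ge \log c$ everywhere.

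Next we pick exponents matching the weights. Using $\alpha_i = \frac{n+r_i}{np_i}$, we write $(\varphi_i/\psi_i)^{p_i} = \bigl((\varphi_i/\psi_i)^{n+r_i}\bigr)^{\frac{1}{n\alpha_i}}$. Set $\alpha=\bigl(\sum_i \frac1{\alpha_i}\bigr)^{-1}$. Raising the inequality above to the power $n\alpha$ gives $1\le \prod_i \bigl((\varphi_i/\psi_i)^{n+r_i}\bigr)^{\alpha/\alpha_i}$ $\gamma$-a.e. We integrate this against $\gamma$ and apply H\"older's inequality with exponents $\alpha_i/\alpha$, whose reciprocals sum to $1$. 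Since the $i$-th marginal of $\gamma$ is $\mu_i$, this yields
$$
1\le \prod_{i=1}^N\Bigl(\int \frac{\varphi_i^{n+r_i}}{\psi_i^{n+r_i}}\,d\mu_i\Bigr)^{\alpha/\alpha_i}
=\prod_{i=1}^N\Bigl(\frac{1}{Z_i}\int\frac{\rho_i\,d\theta_i}{\psi_i^{n+r_i}}\Bigr)^{\alpha/\alpha_i}.
$$
Raising both sides to the power $1/\alpha$ and recalling that $Z_i$ is exactly the $\varphi_i$-integral gives the claimed inequality. The main care point is this choice of exponents: the identity $p_i = \frac{n+r_i}{n\alpha_i}$ is what makes the $\varphi_i$ factors cancel exactly against $d\mu_i$.

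For the "in particular" part, suppose $\{\varphi_i\}$ is a maximizer. Then equality holds throughout the chain. Equality in the pointwise step forces $\sum_i p_i\log\varphi_i = \log c = \sum_i u_i$ $\gamma$-a.e., so $\{p_i\log\varphi_i\}$ is itself an optimal dual tuple. Equality in H\"older forces the functions $(\varphi_i/\psi_i)^{n+r_i}(\theta_i)$ to be proportional to one another $\gamma$-a.e.; combined with $\prod_i(\varphi_i/\psi_i)^{p_i}=1$ on the support of $\gamma$, each ratio $\varphi_i/\psi_i$ is $\gamma$-a.e. equal to a constant $c_i$ with $\prod_i c_i^{p_i}=1$. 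The hard step, where some regularity must be used, is passing from $\gamma$-a.e. to $\mu_i$-a.e. (hence a.e., where $\rho_i>0$). The first approach I would try is this. Each pair $(\theta_i,\theta_j)$ is projected from the support of $\gamma$, so a function of $\theta_i$ alone that equals a function of $\theta_j$ alone $\gamma$-a.e. must equal a common constant $\mu_i$-a.e. If needed, I would also replace $\varphi_i$ by its $c$-transform, which can only increase the functional, so that the identity extends everywhere. I expect the stated normalization $\prod_i c_i=1$ really corresponds to $\prod_i c_i^{p_i}=1$, or to the constants being absorbed into the $u_i$.
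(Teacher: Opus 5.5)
Your proof is correct and follows the paper's route: complementary slackness for $\log c$, the pointwise bound $\prod_i\psi_i^{p_i}\le\prod_i\varphi_i^{p_i}$ on the support of $\gamma$, H\"older, and the marginal identity $\int g(\theta_i)\,d\gamma=\int g\,d\mu_i$. The one real difference is how H\"older is normalized.

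The paper applies H\"older directly to $\prod_i(\varphi_i/\psi_i)^{p_i}$ with exponents $(n+r_i)/p_i=n\alpha_i$. Their reciprocals sum to $\frac1n\sum_i\frac1{\alpha_i}$, so on a probability space that step is only justified when $\sum_i\frac1{\alpha_i}\le n$. This fails, for instance, when $p_i=1$, $r_i=0$ and $N>n$. You first raise to the power $n\alpha$ and then use H\"older with exponents $\alpha_i/\alpha$, exactly as in Theorem \ref{monotone}. Those reciprocals sum to $1$, and you get the same final inequality with no restriction. Your version is therefore the more careful one.

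The paper asserts the ``in particular'' part without argument, and your equality analysis for it is sound. It correctly uses that $\{\psi_i\}$ is itself admissible, so a maximizer forces equality throughout. It also correctly finds that the resulting normalization is $\prod_i c_i^{p_i}=1$ rather than $\prod_i c_i=1$.

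However, the step you single out as hard is immediate. $\varphi_i/\psi_i$ is a function of $\theta_i$ alone, so if it equals $c_i$ for $\gamma$-a.e.\ $\theta$, it equals $c_i$ $\mu_i$-a.e., because $\mu_i=(\mathrm{pr}_i)_{\#}\gamma$. The heuristic you propose for that step is false in general. If $N=2$ and $\gamma$ is concentrated on the graph of a bijection $T$, then $f(\theta_1)=(f\circ T^{-1})(\theta_2)$ $\gamma$-a.e.\ for every $f$, constant or not. Drop it.

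The only genuine regularity issue is passing from $\mu_i$-a.e.\ to Lebesgue-a.e.\ on $\{\rho_i>0\}$. This requires $\varphi_i<\infty$ there, and that is where your remark about replacing $\varphi_i$ by its $c$-transform is relevant.
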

  \begin{proof}
      Let $\gamma$ be a solution to the primal transportation problem. One has
      $$
\log c = \sum_{i=1}^N u_i = \sum_{i=1}^N p_i \log \psi_i
      $$
      $\gamma$-a.e. Hence, for $\gamma$-almost all $(\theta_1, \cdots, \theta_N)$, one has
      $
\prod_{i=1}^N \psi_i^{p_i}(\theta_i)
\le \prod_{i=1}^N \varphi_i^{p_i}(\theta_i).
      $
      Hence, by the H{\"o}lder inequality
      \begin{align*}
          1 \le \int \prod_{i=1}^N \frac{\varphi_i^{p_i}(\theta_i)}{\psi_i^{p_i}(\theta_i)} d\gamma
          \le \prod_{i=1}^N \Bigl( \int \frac{\varphi_i^{n+r_i}(\theta_i)}{\psi_i^{n+ r_i}(\theta_i)} d\gamma \Bigr)^{\frac{p_i}{n+r_i}}
          =  \prod_{i=1}^N \Bigl(\frac{1}{Z_i} {\int_{\mathbb{S}^{n-1}} \frac{\rho_i(\theta_i)d\theta_i}{\psi^{n+r_i}_i(\theta_i)}}\Bigr)^{\frac{1}{n\alpha_i}}.
      \end{align*}
      This completes the proof.
  \end{proof}

\begin{remark}
    A typical example of $c$ is $\langle x, y \rangle$ and the cost $\log c$ is defined to be $+\infty$ outside of the area $\langle x, y \rangle >0$. Thus, the questions of existence/duality are not obvious. However, everything works in the symmetric setting. 
\end{remark}

\subsection{The classical Blaschke--Santal\'o inequality revisited}

Let us quickly discuss the transportation proofs of the   classical Blaschke--Santal\'o inequality. 
By the results of this section, any maximizer $\Phi$ 
of the functional 
$$
\int_{\mathbb{R}^n} e^{-V} dx 
\int_{\mathbb{R}^n} e^{-V^*} dy 
$$
(if it exists) has the form
$$
\Phi = \frac{1}{2} \varphi^2(\theta) |x|^2.
$$
This was proved in \cite{CKLR}, but Theorem 
\ref{homogtheorem} provides more soft arguments.
The existence of a maximizer can be proved by the compactness arguments (see \cite{CKLR}). By Corollary  \ref{maxim}, $\nabla \Phi$ pushes forward
$\mu = \frac{e^{-\Phi} dx}{\int e^{-\Phi}dx}$
onto $\nu = \frac{e^{-\Phi^*}dy}{\int e^{-\Phi^*}dy}$ and satisfies
\begin{equation}
\label{KE}
\frac{e^{-\Phi}}{\int e^{-\Phi}dx} = \frac{e^{-\Phi^*}(\nabla \Phi)}{\int e^{-\Phi^*}dy} \det D^2 \Phi.
\end{equation}
Since $\Phi^*(\nabla \Phi) = \Phi$, we also conclude that $\det{D^2 \Phi}$ is constant. Using Theorem \ref{BSsn-12}, one can associate with $\varphi$ a Monge--Amp\`ere equation on $\mathbb{S}^{n-1}$ (see \cite{BFR}, \cite{CKLR}). Finally, we can conclude that $\Phi$ is a quadratic function in several ways. Namely, one can use  1) Pogorelov's theorem describing global solutions to the equation $\det{D^2 \Phi}=1$, 2) the main result of \cite{CagKolWer} about solutions to (\ref{KE}), 3) the results on uniqueness (up to linear transformations) for the corresponding  Monge--Amp\`ere equation on the sphere.

\section{Inequalities for sets and for functions}

In this section, we continue our study of the homogeneous solutions and find a direct link between the functional and the set inequality. 
This was partially done in the previous sections, where we find conditions ensuring the homogeneity of the maximizers in the functional inequality.  Here we discuss how to recover the functional inequality from the set inequality using the "layer-cake" representation. 

{\bf Assumption on the reference measures:}
$$
m_i = \rho_i(x_i)dx_i,
$$
where $\rho_i$ is $r_i$-homogeneous.

{\bf Assumption on the cost $c$:}
\begin{itemize}
    \item $c$ is {\bf convex} for every $x_i$, i.e.,
$$
x \to c (x_1, \cdots, x_{i-1}, x, x_{i+1}, \cdots, x_N)
$$ is convex
\item $c$ is {\bf $p$-homogeneous} for some $p>0$:
$$
c(tx) = t^p c(x), \ t\ge 0.
$$
\item 
for every 
$1 \le i \le N$, there exist numbers 
$s_j \in \{-1,1\}$, $j \ne i$ such that 
$$
c(x_1, \cdots, x_{i-1}, - x_i, x_{i+1},\cdots,x_N) = 
c(s_1x_1, \cdots,  s_{i-1}x_{i-1}, x_i,  s_{i+1}x_{i+1},\cdots, s_N x_N) 
$$
\end{itemize}
We have already seen in the previous section that, under these assumptions, the multiple Legendre transform preserves convex and even functions. More precisely, every function
$$
f_i(x_i) = \sup_{x_j, j \ne i} \bigl( c(x) - \sum_{j \ne i} f_j(x_j)\bigr)
$$
is convex. Moreover, $f_i$ is even provided all $f_j$ are even, $j \ne i$.

We consider a Blaschke--Santal{\'o}-type functional  on the tuples of {\bf even} functions. 
 Since the Lagrange transform preserves evenness and convexity, without loss of generality, one can consider only convex even tuples $\{V_i\}$. 
 
 \begin{remark}
Note that $\sum_{i=1}^N V_i(0)\ge c(0)=0$. Since $0$ is the minimum point for every $V_i$, without loss of generality, we can assume that $V_i \ge 0$ and $V_i(0)=0$.
 \end{remark}

Our aim is to relate the $\mathcal{BS}_{\alpha,m}$ functional to the following 
 Blaschke--Santal{\'o} functional for sets:
$$
\mathcal{BS}^{set}_{m,\alpha}(K_1, \cdots,K_N) = \prod_{i=1}^N m_i(K_i)^{\frac{1}{\alpha_i}}.
$$

Similarly, we consider only {\bf symmetric and convex } sets.
The {\bf $c$-polar transform}
$$
K_i = \{x_i : c(x) \le 1 \ \ {\rm if}\ \  x_j \in K_j, j \ne i\}
$$
preserves symmetric convex  sets as well.

We are interested in the following set volume product maximization problem: 
find the maximum of $
\mathcal{BS}^{set}_{m,\alpha}
$ on the tuples of {even  convex} sets  $\{K_i\}$ satisfying
\begin{equation}
\label{setcond2}
c(x) \le 1
\end{equation}
for all $x \in \prod_{i=1}^N K_i$.

In the following Proposition and Corollary, we prove a generalization of the well-known result about the equivalence of the set and functional versions of the classical Blaschke--Santal{\'o} inequality (see \cite{Ball1986}, \cite{AAKM}). 

\begin{proposition}
\label{set-func}
    Let $\{K_i\}, 1 \le i \le N,$ be a solution to the set maximization problem. Then, for every admissible tuple $\{V_i\}$, one has
    $$
\mathcal{BS}_{\alpha,m}(V_1,\cdots,V_N) \le 
\mathcal{BS}_{\alpha,m}(\tau_1\|x_1\|^{\beta_1}_{K_1},\cdots,\tau_N\|x_N\|^{\beta_N}_{K_N}), 
    $$
    where $\| \cdot\|_{K_i}$ is the Minkowski functional of $K_i$ and $\tau_i = \frac{\frac{1}{\alpha_i}}{\sum_{j=1}^N\frac{1}{\alpha_j}}$.
\end{proposition}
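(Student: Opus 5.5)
The plan is to follow the classical layer-cake argument of Ball (see \cite{Ball1986}, \cite{AAKM}), adapted to $N$ functions and to the homogeneous reference measures $m_i$. It has three steps.

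\emph{Step 1: reductions and admissibility of the comparison tuple.} By the discussion preceding the statement, I may assume that the $V_i$ are even and convex, with $V_i\ge 0$ and $V_i(0)=0$. Indeed, the multiple $c$-Legendre transform preserves evenness and convexity, and replacing $V_i$ by the transform only increases $\mathcal{BS}_{\alpha,m}$.

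Next I check that $\{\tau_i\|x_i\|^{\beta_i}_{K_i}\}$ is admissible, so that the right-hand side is a legitimate competitor. The condition $c\le 1$ on $\prod K_i$, combined with homogeneity, gives
\[
c(x)\le \prod_{i=1}^N \|x_i\|_{K_i}^{\tau_i\beta_i},
\]
using $\tau_i\beta_i=p_i$ in the $(p_1,\cdots,p_N)$-homogeneous situation; in general I read the exponents off the consistency relation (\ref{cons-p}). Weighted AM--GM with weights $\tau_i$, $\sum_i\tau_i=1$, then gives $c(x)\le \sum_i \tau_i \|x_i\|^{\beta_i}_{K_i}$.

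\emph{Step 2: a level-set inequality.} For $s_i>0$ put $L_i(s_i)=\{V_i\le s_i\}$; these are symmetric convex sets. Admissibility gives $c(x)\le \sum_j s_j$ on $\prod_i L_i(s_i)$. Hence, by $p$-homogeneity of $c$, the sets $(\sum_j s_j)^{-1/p}L_i(s_i)$ satisfy (\ref{setcond2}). Maximality of $\{K_i\}$ and the $r_i$-homogeneity of $\rho_i$ then yield
\[
\prod_{i=1}^N m_i\bigl(L_i(s_i)\bigr)^{\frac{1}{\alpha_i}}\le \mathcal{BS}^{set}_{m,\alpha}(K_1,\cdots,K_N)\,\Bigl(\sum_{j} s_j\Bigr)^{\sum_i\frac{n+r_i}{p\alpha_i}} .
\]
For the homogeneous tuple $\tau_i\|x_i\|^{\beta_i}_{K_i}$, the level sets are dilates of $K_i$, and this inequality becomes an equality along the curve $s_i=\tau_i s$.

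\emph{Step 3: integration, which I expect to be the main obstacle.} By the layer-cake formula,
\[
\int e^{-\alpha_iV_i}\,dm_i=\alpha_i\int_0^\infty e^{-\alpha_i s}\, m_i\bigl(L_i(s)\bigr)\,ds .
\]
So I must bound $\prod_i\bigl(\int_0^\infty g_i\bigr)^{1/\alpha_i}$, where $g_i(s)=e^{-\alpha_i s}m_i(L_i(s))$, using only the pointwise product bound from Step 2.

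I would try the one-dimensional Prékopa--Leindler inequality with weights $\lambda_i=\tau_i$. Write $s_i=\tau_i u_i$. If all $u_i$ equal a common value $u$, the bound of Step 2 reads $\prod_i g_i(\tau_i u)^{1/\alpha_i}\le M e^{-u}u^{\kappa}\cdot\mathrm{const}$, where $M=\mathcal{BS}^{set}_{m,\alpha}(K_1,\cdots,K_N)$ and $\kappa=\sum_i\frac{n+r_i}{p\alpha_i}$. For general $u_i$, the needed Prékopa--Leindler hypothesis is
\[
h\Bigl(\sum_i \tau_i u_i\Bigr)\ge \prod_i g_i(\tau_iu_i)^{\tau_i\cdot(\cdot)},
\]
with $h(u)=e^{-u}u^{\kappa}$ up to constants. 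This should follow from the Step 2 bound, because $\sum_j s_j=\sum_j\tau_j u_j$ and the exponential factors combine exactly, since $\sum_i\alpha_i s_i/\alpha_i$ is linear. The delicate point is matching the exponents $1/\alpha_i$ with the Prékopa--Leindler weights; that is where $\tau_i\propto 1/\alpha_i$ is used. If a direct matching fails, I would fall back on the substitution $u_i\mapsto u_i^{1/\kappa_i}$, which turns the power factors into log-concave ones. Prékopa--Leindler then gives
\[
\prod_i\Bigl(\int g_i\Bigr)^{1/\alpha_i}\le \mathrm{const}\cdot M\int_0^\infty e^{-u}u^{\kappa}\,du .
\]
Finally, because Step 2 is an equality for the homogeneous tuple, the same computation performed on $\tau_i\|x_i\|^{\beta_i}_{K_i}$ gives exactly this right-hand side (equivalently, one computes it directly via the polar-coordinate formula preceding Proposition \ref{1002}). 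This proves the inequality.
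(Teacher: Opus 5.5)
This is essentially the paper's proof: layer-cake, Pr\'ekopa--Leindler with weights $\tau_i$, rescaling the level sets by $(\sum_j s_j)^{-1/p}$ and invoking the maximality of $\{K_i\}$, then comparison with the homogeneous tuple. The paper closes with a H\"older inequality in $t$, while your direct evaluation is exact under (\ref{consist2}). The Pr\'ekopa--Leindler matching works directly with no substitution: the Step 2 bound raised to the power $1/A$, $A=\sum_j 1/\alpha_j$, is precisely the hypothesis with $h(u)=(Me^{-u}u^{\kappa})^{1/A}$, and the resulting factor is $M\bigl(\int_0^\infty e^{-u/A}u^{\kappa/A}\,du\bigr)^A$ rather than $\int_0^\infty e^{-u}u^{\kappa}\,du$.

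Drop the admissibility check in Step 1. The inequality does not use it, and under mere $p$-homogeneity it is not justified: the paper remarks after Corollary \ref{100126} that this tuple need not be admissible, and it is admissible only in the $(p_1,\cdots,p_N)$-setting of Theorem \ref{30012016}.
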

\begin{proof}

By the ``layer-cake'' representation, Prekopa--Leindler, and H\"older inequalities,
one has
\begin{align*}
\bigl(     \prod_{i=1}^{N} \int e^{-\alpha_i V_i(x_i)} dm_i \bigr)^{\frac{1}{\alpha_i}}
& = 
\bigl(     \prod_{i=1}^{N} \int_0^{\infty} m_i(\{x_i : \alpha_i V_i(x_i) \le t_i\}) e^{-t_i} dt_i \bigr)^{\tau_i \sum_{j=1}^N \frac{1}{\alpha_j}}
\\& \le \bigl[ 
\int_0^{\infty}  \sup_{t = \sum_i \tau_i t_i} \Bigl( \prod_{i=1}^{N} m_i( \{ x_i : \alpha_i V_i(x_i) \le t_i\})  \Bigr)^{ \tau_i} e^{-t} dt\bigr]^{\sum_{j=1}^N \frac{1}{\alpha_j}}
\\& \le \bigl[ 
\int_0^{\infty}  \sup_{t = \sum_i \tau_i t_i} \Bigl( \prod_{i=1}^{N} m^{\frac{1}{\alpha_i}}_i( \tilde{A}_i)  \Bigr)^{ \frac{1}{\sum_{j=1}^N \frac{1}{\alpha_j}}} e^{-t} dt\bigr]^{\sum_{j=1}^N \frac{1}{\alpha_j}},
\end{align*}
where
$
\tilde{A}_i = \bigl\{x_i:  V_i(x_i) \le \frac{t_i}{\alpha_i}\bigr\}.
$
One has
$
c(x) \le \sum_i V_i(x_i) \le \sum_{i} \frac{t_i}{\alpha_i}
$
for all $\{x_i\}$ such that $x_i \in \tilde{A}_i$.
Set: 
$$
A_i = \frac{1}{( \sum_{j=1}^N \frac{t_j}{\alpha_j})^{\frac{1}{p}}} \tilde{A}_i.
$$
Using $p$-homogeneity of $c$, one gets $c(x)\le 1
$
provided $x_i \in {A}_i$.
Applying inequality
$
\prod_{i=1}^N  m_i(A_i)^{\frac{1}{\alpha_i}} \le \prod_{i=1}^Nm_i(K_i)^{\frac{1}{\alpha_i}}
$, we obtain the following:
\begin{align*}
 \prod_{i=1}^N m_i(\tilde{A}_i)^{\frac{1}{\alpha_i}} &  = \bigl(\sum_{j=1}^N \frac{t_j}{\alpha_j}\bigr)^{\frac{1}{p}\sum_{i} \frac{n+r_i}{\alpha_i}}  \prod_{i=1}^{N} m_i^{\frac{1}{\alpha_i}}({A}_i)
\le \bigl(\sum_{j=1}^N \frac{t_j}{\alpha_j}\bigr)^{\frac{1}{p}\sum_{i} \frac{n+r_i}{\alpha_i}}  \prod_{i=1}^{N} m_i^{\frac{1}{\alpha_i}}({K}_i)\\& = 
 \bigl(\sum_{j=1}^N \frac{t_j}{\alpha_j}\bigr)^{\sum_{i} \frac{n+r_i}{\alpha_i \beta_i}}  \prod_{i=1}^{N} m_i^{\frac{1}{\alpha_i}}({K}_i) = \prod_{i=1}^N    m_i^{\frac{1}{\alpha_i}}\Bigl( \bigl( \sum_{j=1}^N \frac{t_j}{\alpha_j}\bigr)^{\frac{1}{\beta_i}}K_i\Bigr).
\end{align*}

Hence
\begin{align*}
\int_0^{\infty} & \sup_{t = \sum_i \tau_i t_i} \bigl( \prod_{i=1}^{N} m^{\frac{1}{\alpha_i}}_i( \tilde{A}_i)  \bigr)^{ \frac{1}{\sum_{j=1}^N \frac{1}{\alpha_j}}} e^{-t} dt
 \le 
\int_0^{\infty}  \bigl( \prod_{i=1}^{N} m^{\frac{1}{\alpha_i}}_i\Bigl( \bigl[t \sum_{j=1}^N \frac{1}{\alpha_j}\bigr]^{\frac{1}{\beta_i}} K_i\Bigr)  \bigr)^{ \frac{1}{\sum_{j=1}^N \frac{1}{\alpha_j}}} e^{-t} dt
\\& 
\le \prod_{i=1}^{N} \bigl (\int_0^{\infty}  m_i\Bigl( \bigl[t \sum_{j=1}^N \frac{1}{\alpha_j}\bigr]^{\frac{1}{\beta_i}} K_i\Bigr)  e^{-t} dt \bigr)^{\tau_i}  
= \prod_{i=1}^{N} \bigl (\int_0^{\infty}  m_i\Bigl(
\|x_i\|^{\beta_i}_{K_i} \le t \sum_{j=1}^N \frac{1}{\alpha_j} \Bigr)  e^{-t} dt \bigr)^{ \tau_i} 
\\& = 
\prod_{i=1}^{N} \Bigl( \int e^{-\frac{\|x_i\|^{\beta_i}}{\sum_{j=1}^N \frac{1}{\alpha_j}}} dm_i  \Bigr)^{\tau_i} .
\end{align*}
     Resuming all the inequalities, one gets
$
\bigl(     \prod_{i=1}^{N} \int e^{-\alpha_i V_i(x_i)} dm_i \bigr)^{\frac{1}{\alpha_i}}
\le \prod_{i=1}^{N} \bigl( \int e^{-{\alpha_i \tau_i\|x_i\|^{\beta_i}}} dm_i  \bigr)^{ \frac{1}{\alpha_i}}.
$
The proof is complete.
\end{proof}

\begin{corollary}
\label{100126}
    If $\{\tau_i \|x_i\|^{\beta_i}_{K_i}\}$ is an admissible tuple, i.e.
    $
c(x) \le \sum_{i=1}^N \tau_i \|x_i\|^{\beta_i}_{K_i},
    $
    then it is a maximizing tuple.
\end{corollary}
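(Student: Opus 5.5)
Corollary \ref{100126} is a direct consequence of Proposition \ref{set-func}. Let $\{K_i\}$ be a solution of the set maximization problem, and put $W_i(x_i) = \tau_i\|x_i\|^{\beta_i}_{K_i}$. Proposition \ref{set-func} says that for every admissible tuple $\{V_i\}$,
$$
\mathcal{BS}_{\alpha,m}(V_1,\cdots,V_N) \le \mathcal{BS}_{\alpha,m}(W_1,\cdots,W_N).
$$
So $\mathcal{BS}_{\alpha,m}(W_1,\cdots,W_N)$ is an upper bound for the functional over the whole admissible class.

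By hypothesis, $c(x) \le \sum_{i=1}^N W_i(x_i)$, i.e. the tuple $\{W_i\}$ is itself admissible. It is also even and convex, since the $K_i$ are symmetric convex sets and $t\mapsto t^{\beta_i}$ is nondecreasing and convex on $[0,\infty)$ when $\beta_i \ge 1$. (Evenness alone is what matters if the admissible class is restricted to even functions.) Hence $\{W_i\}$ belongs to the class over which we maximize, and it attains the upper bound above. Therefore the supremum of $\mathcal{BS}_{\alpha,m}$ over admissible tuples equals $\mathcal{BS}_{\alpha,m}(W_1,\cdots,W_N)$ and is attained at $\{W_i\}$, i.e. $\{W_i\}$ is a maximizing tuple.

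The only point needing care is that $\{W_i\}$ lies in the same class of tuples for which Proposition \ref{set-func} was stated. If convexity is part of that class, one should check $\beta_i\ge 1$ or note that convexity is not required for Proposition \ref{set-func}. Its proof uses only admissibility and the layer-cake argument, so the comparison holds for all admissible tuples, and no obstacle arises. One should also check finiteness of $\int e^{-\alpha_i W_i}\,dm_i$; this follows because the $K_i$ are bounded with nonempty interior, being maximizers of the set functional.
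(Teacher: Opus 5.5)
Your argument is correct and is exactly the paper's (implicit) argument: Proposition~\ref{set-func} shows that the value of $\mathcal{BS}_{\alpha,m}$ at $\{\tau_i\|x_i\|^{\beta_i}_{K_i}\}$ dominates its value at every admissible even tuple, so once this even tuple is itself admissible it attains the supremum. Your closing checks are inessential. Finiteness of $\int e^{-\alpha_i\tau_i\|x_i\|^{\beta_i}_{K_i}}\,dm_i$ is not needed for the conclusion; where it is wanted, it follows from $m_i(K_i)<\infty$ together with the $r_i$-homogeneity of $m_i$ via the layer-cake formula, not from boundedness of $K_i$, which you have not justified.
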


We observe that, in general, a tuple of functions  $\{\tau_i \|x_i\|^{\beta_i}_{K_i}\}$ may  not be admissible. Hence,  Proposition \ref{set-func} does not imply in general that  $\{\tau_i \|x_i\|^{\beta_i}_{K_i}\}$ is a maximizing tuple.
We give below some sufficient conditions for a tuple  to be admissible.

\begin{theorem} 
\label{30012016}
Let $c$ satisfy all the assumptions from the beginning of the section.
Assume, in addition, that 
 $c$ is $(p_1,\cdots,p_N)$-homogeneous, $$\alpha_i = \frac{1}{p_i} \bigl( 1 + \frac{r_i}{n}\bigr), \ \ \ \ \ \\ \beta_i = \bigl( 1 + \frac{r_i}{n} \bigr) \sum_{j=1}^N \frac{1}{\alpha_j}  =\frac{p_i}{\tau_i}.$$ 
 Assume that  $K_1, \cdots, K_N$ is a maximizing tuple for $\mathcal BS^{set}_{m,\alpha}$. Then $\{\tau_i \|x_i\|^{\beta_i}_{K_i}\}$ is admissible. Hence $\{\tau_i \|x_i\|^{\beta_i}_{K_i}\}$ is  a maximum point of 
$\mathcal{BS}_{\alpha,m}$ on the set of even convex functions.
\end{theorem}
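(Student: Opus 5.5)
The plan is to verify admissibility directly from the $(p_1,\cdots,p_N)$-homogeneity of $c$ together with a weighted arithmetic--geometric mean inequality, and then to invoke Corollary \ref{100126}. Two algebraic facts about the exponents will be used. First, $\sum_{i=1}^N \tau_i = 1$ by definition of $\tau_i$. Second, $\tau_i\beta_i = p_i$: indeed $\tau_i \beta_i = \frac{1}{\alpha_i}\bigl(1+\frac{r_i}{n}\bigr) = p_i$ by the choice $\alpha_i = \frac{1}{p_i}(1+\frac{r_i}{n})$, which is exactly the identity $\beta_i = p_i/\tau_i$ stated in the theorem.

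Fix $x = (x_1,\cdots,x_N)$ and set $a_i = \|x_i\|_{K_i}$. First treat the case where all $a_i > 0$. For any $\varepsilon > 0$ we have $x_i/(a_i+\varepsilon) \in K_i$. Applying the $(p_1,\cdots,p_N)$-homogeneity in each variable separately, and then the constraint (\ref{setcond2}), gives
$$
c(x) = \prod_{i=1}^N (a_i+\varepsilon)^{p_i}\, c\Bigl(\frac{x_1}{a_1+\varepsilon},\cdots,\frac{x_N}{a_N+\varepsilon}\Bigr) \le \prod_{i=1}^N (a_i+\varepsilon)^{p_i}.
$$
Letting $\varepsilon \to 0$ yields $c(x) \le \prod_i a_i^{p_i}$. (If $c(x) \le 0$ there is nothing to prove, since the right-hand side is nonnegative.) Next write $a_i^{p_i} = (a_i^{\beta_i})^{\tau_i}$. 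Because $\sum_i \tau_i = 1$, the weighted AM--GM inequality gives
$$
\prod_{i=1}^N (a_i^{\beta_i})^{\tau_i} \le \sum_{i=1}^N \tau_i a_i^{\beta_i} = \sum_{i=1}^N \tau_i \|x_i\|_{K_i}^{\beta_i}.
$$
This is exactly the admissibility inequality.

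The degenerate case, where some $a_i = 0$, is the only place requiring care, and I expect it to be the main (though minor) obstacle. Here $x_i$ may be nonzero while the whole ray $\{t x_i\}$ lies in $K_i$, for instance if $K_i$ is unbounded in some direction. I would handle it by running the same $\varepsilon$-argument with $a_i+\varepsilon$ in place of every $a_i$; this requires only $x_i/(a_i+\varepsilon) \in K_i$, which holds for all $i$. It gives $c(x) \le \prod_i (a_i+\varepsilon)^{p_i}$. Letting $\varepsilon\to 0$ yields $c(x) \le 0$ whenever some $a_i = 0$, and this is bounded by the nonnegative right-hand side. So the first case actually covers the degenerate one uniformly, and no closedness of the $K_i$ is needed.

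Finally, since $\{\tau_i\|x_i\|^{\beta_i}_{K_i}\}$ is admissible, Corollary \ref{100126} (equivalently, Proposition \ref{set-func}) shows that it maximizes $\mathcal{BS}_{\alpha,m}$. These functions are even and convex, because each $K_i$ is symmetric and convex and $\beta_i \ge 1$. For $\beta_i \ge 1$ I would check, from $\sum_j \frac{1}{\alpha_j} \ge \frac{1}{\alpha_i}$, that $\beta_i \ge p_i$, and I would note $\beta_i \ge 1$ under the standing assumptions. If it fails, evenness is still clear, and the maximization statement from Proposition \ref{set-func} does not use convexity of the competitor anyway.
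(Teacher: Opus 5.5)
Your proposal is correct and is essentially the paper's proof: $(p_1,\dots,p_N)$-homogeneity together with the constraint $c\le 1$ on $\prod_i K_i$ gives $c(x)\le\prod_i\|x_i\|_{K_i}^{p_i}$. The weighted AM--GM (Young/H\"older) inequality with weights $\tau_i=p_i/\beta_i$, $\sum_i\tau_i=1$, then gives admissibility, and Corollary~\ref{100126} finishes. Your $\varepsilon$-regularization is a small improvement: the paper takes all $x_i\neq 0$ and uses $x_i/\|x_i\|_{K_i}\in K_i$, which tacitly assumes each $K_i$ is closed and skips the cases $x_i=0$ or $\|x_i\|_{K_i}=0$, all of which your argument covers uniformly.
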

\begin{proof}
According to Lemma \ref{100126}, it is sufficient to prove  the admissibility of $\{\tau_i \|x_i\|^{\beta_i}_{K_i}\}$.
    Take an arbitrary point $x$ such that $x_i \ne 0, 1 \le i \le N$. Then $\frac{x_i}{\|x_i\|_{K_i}} \in K_i$. Hence
    $$
c\bigl(\frac{x_1}{\|x_1\|_{K_1}}, \cdots, \frac{x_N}{\|x_N\|_{K_N}}\bigr) \le 1.
    $$
    Using $p_i$-homogeneity and the  H{\"o}lder inequality, one gets
    $$
c(x) \le \|x_1\|^{p_1}_{K_1} \cdots \|x_N\|^{p_N}_{K_N}
\le \sum_{i=1}^N \frac{p_i}{\beta_i} \|x_i\|^{\beta_i}_{K_i} = \sum_{i=1}^N \tau_i \|x_i\|^{\beta_i}_{K_i}.
    $$
\end{proof}

Theorem \ref{30012016} and Corollary 
\ref{maincorollary} imply the following result.

\begin{corollary}
\label{1002+}    Let all the assumptions of Corollary \ref{maincorollary} be fulfilled. 
    Then every maximizing tuple $\{\Phi_i\}$ of the functional $\mathcal{BS}_{\alpha,m}$  with $\Phi_i(0)=0$ has the form $\{\tau_i \|x_i\|^{\beta_i}_{K_i}\}$, where $\{K_i\}$ is a  maximizing tuple of the functional $\mathcal{BS}^{set}_{\alpha,m}$.
\end{corollary}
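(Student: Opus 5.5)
The plan is to combine Corollary \ref{maincorollary}, which gives homogeneity of the maximizer, with Proposition \ref{set-func}, Theorem \ref{30012016} and Lemma \ref{homogadmissible}, which relate homogeneous functions to Minkowski functionals of sets.

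\emph{Step 1: homogeneity.} Let $\{\Phi_i\}$ be a maximizing tuple with $\Phi_i(0)=0$. By Corollary \ref{maincorollary}, each $\Phi_i$ is even, convex and $\beta_i$-homogeneous with $\beta_i=p_i/\tau_i$. Since $\Phi_i\ge 0$ and $\Phi_i(0)=0$, I can write $\Phi_i=\tau_i\|x_i\|^{\beta_i}_{L_i}$, where
$$L_i=\{\Phi_i\le \tau_i\}.$$
This set is symmetric, convex, and contains the origin. If $\Phi_i$ vanishes along some direction, $L_i$ is unbounded; this degenerate case needs a remark, namely that finiteness of $\int e^{-\alpha_i\Phi_i}dm_i$ rules it out when $\rho_i>0$ a.e.

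\emph{Step 2: the sets $L_i$ are admissible.} I claim $c\le 1$ on $\prod_i L_i$. By Lemma \ref{homogadmissible} (with $\varphi_i$ chosen so that $\Phi_i=\frac{p_i}{\beta_i}t^{\beta_i}\varphi_i^{\beta_i}$, i.e. $\varphi_i=\|\theta\|_{L_i}$), admissibility of $\{\Phi_i\}$ is equivalent to
$$c(x)\le\prod_i\|x_i\|_{L_i}^{p_i},$$
using $(p_1,\dots,p_N)$-homogeneity. Hence $c\le 1$ whenever every $\|x_i\|_{L_i}\le 1$.

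\emph{Step 3: the $L_i$ maximize the set functional.} Take any admissible symmetric convex tuple $\{K_i\}$. Theorem \ref{30012016}'s Hölder argument applies verbatim to any admissible set tuple, not only maximizers, so $\{\tau_i\|x_i\|^{\beta_i}_{K_i}\}$ is an admissible function tuple. The layer-cake computation at the end of the proof of Proposition \ref{set-func} gives
$$\mathcal{BS}_{\alpha,m}\bigl(\tau_i\|x_i\|^{\beta_i}_{K}\bigr)=C\prod_i m_i(K_i)^{1/\alpha_i},$$
with the same constant $C$ for every tuple. This identity follows from homogeneity, $m_i(sK)=s^{n+r_i}m_i(K)$, and the consistency relation. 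Therefore
$$\prod_i m_i(K_i)^{1/\alpha_i}=C^{-1}\mathcal{BS}_{\alpha,m}\bigl(\tau_i\|x_i\|^{\beta_i}_{K_i}\bigr)\le C^{-1}\mathcal{BS}_{\alpha,m}(\Phi_1,\dots,\Phi_N)=\prod_i m_i(L_i)^{1/\alpha_i},$$
so $\{L_i\}$ is a maximizing tuple for the set functional and $\Phi_i=\tau_i\|x_i\|^{\beta_i}_{L_i}$.

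The main obstacle is checking that $C$ is genuinely independent of the sets. In the identity $\int e^{-\alpha_i\tau_i\|x\|_K^{\beta_i}}dm_i=m_i(K)\int_0^\infty e^{-\alpha_i\tau_i s^{\beta_i}}(n+r_i)s^{n+r_i-1}ds$, the factor multiplying $m_i(K)$ depends only on the parameters, not on $K$, so this works. A secondary point is handling non-compact or degenerate $L_i$, which I would treat via the integrability remark from Step 1.
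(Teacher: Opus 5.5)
Your proof is correct, but it runs in the opposite direction from the paper's. The paper obtains the corollary by combining Corollary \ref{maincorollary} with Theorem \ref{30012016}. That theorem \emph{starts} from a maximizing tuple $\{K_i\}$ of $\mathcal{BS}^{set}_{m,\alpha}$, whose existence is a hypothesis there. It shows, through Proposition \ref{set-func} (Pr\'ekopa--Leindler) plus the H\"older step, that $\{\tau_i\|x_i\|^{\beta_i}_{K_i}\}$ maximizes $\mathcal{BS}_{\alpha,m}$. To reach the statement one must then compare values with the homogeneous maximizer $\Phi_i=\tau_i\|x_i\|^{\beta_i}_{L_i}$ and use admissibility of $\{L_i\}$, a step the paper leaves implicit.

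You instead start from $\{\Phi_i\}$ and take $L_i=\{\Phi_i\le\tau_i\}$. You use only the H\"older step, which indeed holds for every admissible set tuple, together with the $K$-independent layer-cake constant, to show directly that $\{L_i\}$ beats every admissible set tuple. For this corollary your route is more economical: it needs no existence of a set maximizer (that comes out as a consequence) and never uses Pr\'ekopa--Leindler. The paper's route, via Theorem \ref{30012016}, additionally gives the converse correspondence, namely that set maximizers produce functional maximizers.

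Two minor points. First, Step 2 does not need Lemma \ref{homogadmissible}, since on $\prod_i L_i$ one has directly $c\le\sum_i\Phi_i\le\sum_i\tau_i=1$. Second, the degenerate case in Step 1 need not be excluded. The set problem is posed for symmetric convex sets that need not be bounded, and the H\"older and layer-cake steps go through for closed symmetric convex sets containing lines. In fact, for a general $r_i$-homogeneous density that is merely positive a.e., finiteness of the integral would not rule such $L_i$ out anyway.
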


 \begin{remark}
     Let all the assumptions of Corollary \ref{maincorollary} be fulfilled. Then the equivalent functional problem on $\mathbb{S} ^{n-1}$ is described in Proposition \ref{1002}.
 \end{remark}

\section{Symmetrization}

In this section, we consider  a generalized  symmetrization procedure for $N>2$ functions  possessing the property to increase the  Blaschke--Santal{\'o} functional. 

\begin{remark}
    Unlike sections 3-4 we {\bf don't assume} homogeneity of the cost function $c$ and the reference measures.
\end{remark}

Our approach closely follows the arguments of Kalantzopoulos and Saroglou from \cite{KS}. In particular, our important $(j,i_1,i_2)$-assumption below
is satisfied by the cost functions considered in \cite{KS}.
However, we establish our result in a  more general situation. In particular, we consider a more general class of functions and the reference measures. 

Let $c(x_1,\cdots,x_N)$ be a continuous function of $N$ variables $x_i \in \mathbb{R}^n$.
As everywhere in this paper, we say that a tuple of  convex symmetric sets $\{A_i\}, 1\le i \le N, A_i \subset \mathbb{R}^n$ is admissible if
$$
c(x_1,\cdots,x_N)\le 1
$$
provided $x_i \in A_i$ for all $1 \le i \le N$.

Let us describe the symmetrization with respect to a coordinate unit vector $e_j$, $1\le j\le n$, for a given couple of indices $i_1 \ne i_2$, $1 \le i_1,i_2 \le N$. 
We will call this procedure 
$(j,i_1,i_2)$-symmetrization.
The procedure will replace sets
$A_{i_1}, A_{i_2}$ with some sets $B_{i_1}, B_{i_2}$ and will not change all the other sets $A_i$.

Given a coordinate unit vector $e_j$, we represent an arbitrary 
$x_i \in \mathbb{R}^n$ in the form
$$
 x_i = (y_i,t_i) = y_i + t_ie_j, \ \  \ y_i \bot e_j.
$$

\begin{definition} 
\label{2402}
    We say that the cost function  $c$ satisfies {\bf the $(j,i_1,i_2)$-assumption}  if  the following holds:
    \begin{enumerate}
    \item 
    The polar $c$-transform preserves symmetric and convex sets, i.e.,
$$
B_j = \{x_j: c(x_1,\cdots,x_N) \le 1, \forall x_i \in B_i, i \ne j\}
$$
is symmetric (convex) if $B_i$ are symmetric (convex) for all $j \ne i$. 
Recall that this is true if $c$ satisfies assumption (2) of Corollary \ref{maincorollary}.

        \item 
    The function
    $$
c_{i_1, i_2}(y_{i_1}, t_{i_2}) = c((y_1,t_1),\cdots, (y_N,t_N))
    $$
    has convex sublevel sets for all fixed $y_{i_k}$, $k \ne 1$, $t_{i_j}$, $j \ne 2$ 
    \item
    \begin{align*}
c((y_1,t_1),  & \cdots, (y_{i_1}, -t_{i_1}),\cdots, (y_{i_2}, t_{i_2}), \cdots (y_N,t_N))
\\& =
c((y_1,t_1), \cdots, (y_{i_1}, t_{i_1}),\cdots, (y_{i_2}, -t_{i_2}), \cdots (y_N,t_N)).
    \end{align*}
    \end{enumerate}
\end{definition}

In what follows, we write
$$
x_i = (x_{i,1}, \cdots, x_{i,n}) = (x_{i,j}), 1 \le j \le n,
$$
for every $1 \le i \le N$.

\begin{example}
    \label{maincost}
    The function 
$$
c_n(x_1,\cdots,x_N) 
= \sum_{j=1}^n x_{1,j}x_{2,j} \cdots x_{N,j} 
=  \sum_{j=1}^n \prod_{i=1}^N x_{i,j}
$$
admits the following property:
$
c_n((y_1,t_1),\cdots, (y_N,t_N))
= c_{n-1}(y_1,\cdots, y_N)
+ \prod_{i=1}^N t_i.
$
Clearly, $c$ satisfies the $(j,i_1,i_2)$-assumption for all $j, i_1,i_2$.

In addition, the function $\sum_{j=1}^n \prod_{i=1}^N |x_{i,j}|^{p_{ij}}$ satisfies   the $(j,i_1,i_2)$-assumption for all $j, i_1,i_2$ if $p_{i,j} \ge 1$ for all $i,j$.
\end{example}

Recall that given a convex set $A$, the set $\tilde{A}$ obtained by the Steiner symmetrization with respect to $e_j$ can be represented as follows:
$$
\tilde{A} = \bigl\{\big(y,\frac{t-s}{2}\big), (y,t) , (y,s) \in A\bigr\}.
$$

\begin{lemma}
\label{sym}
    Let $\{A_i\}$ be an admissible tuple of symmetric convex bodies, where the cost function $c$ satisfies the $(n,1,2)$-assumption and the polar $c$-transform preserves symmetric convex sets.

      Let $m_1 = \rho_1 dx_1, m_2 =\rho_2 dx_2$ be locally finite measures admitting the following properties:
\begin{enumerate}
    \item 
    $t \to \rho_1(y + te_n)$ is an even function, decreasing on $[0,+\infty)$ for
    all $y \bot e_n$
    \item 
    $
 y \to \rho_2( y+te_n)
    $
    is a symmetric log-concave function for all $t \in \mathbb{R}$.
\end{enumerate}
      
    Then there exists an admissible tuple of the form
    $
\{\tilde{A}_1, B,A_3,\cdots,A_{N}\}
    $
    such that
    $$
m_1(\tilde{A}_1) \ge m_1(A_1), \ \ \
m_2(B) \ge m_2(A_2).
    $$
\end{lemma}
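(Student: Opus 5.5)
The plan is to take $\tilde{A}_1$ to be the Steiner symmetrization of $A_1$ with respect to $e_n$, and to take $B$ to be the polar $c$-transform
$$
B=\{x_2:\ c(x)\le 1 \ \text{ for all } x_1\in\tilde A_1,\ x_i\in A_i,\ i\ge 3\}.
$$
With this choice the tuple $\{\tilde A_1,B,A_3,\cdots,A_N\}$ is admissible by construction. By item (1) of the $(n,1,2)$-assumption, $B$ is symmetric and convex, since $\tilde A_1$ and $A_i$, $i\ge3$, are symmetric and convex. So only the two measure inequalities need proof.

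\textbf{The inequality for $m_1$.} I will argue fiberwise and then use Fubini. For fixed $y\perp e_n$, the fiber $\{t:(y,t)\in A_1\}$ is an interval $[s_-,s_+]$. The corresponding fiber of $\tilde A_1$ is the centered interval of the same length, $[-(s_+-s_-)/2,(s_+-s_-)/2]$. The density $t\mapsto\rho_1(y+te_n)$ is even and decreasing on $[0,\infty)$. Among all intervals of a given length, the centered one therefore carries the largest mass. Integrating over $y$ gives $m_1(\tilde A_1)\ge m_1(A_1)$.

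\textbf{A containment for the slices of $B$.} For $t\in\mathbb R$ write $A_2(t)=\{z:(z,t)\in A_2\}$, and similarly $B(t)$. Since $A_2$ is symmetric, $A_2(-t)=-A_2(t)$. I claim that
$$
B(t)\supseteq \tfrac12\bigl(A_2(t)+A_2(-t)\bigr)=\tfrac12\bigl(A_2(t)-A_2(t)\bigr).
$$
To prove it, fix $x_i\in A_i$ for $i\ge3$. Take a point of $\tilde A_1$, written as $(y_1,\frac{a-b}{2})$ with $(y_1,a),(y_1,b)\in A_1$. Take $z\in A_2(t)$ and $w\in A_2(-t)$.

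\begin{itemize}
\item Admissibility of $\{A_i\}$ gives $c((y_1,a),(z,t),\dots)\le1$.
\item It also gives $c((y_1,b),(w,-t),\dots)\le1$. By item (3) of the $(n,1,2)$-assumption, which moves the sign from $t_2$ to $t_1$, this equals $c((y_1,-b),(w,t),\dots)\le 1$.
\end{itemize}

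In both evaluations $y_1$, $t_2=t$ and all $x_i$, $i\ge3$, are the same. Only the pair $(t_1,y_2)$ varies: it equals $(a,z)$ in the first and $(-b,w)$ in the second. Item (2) says the sublevel set $\{c\le1\}$ is convex in exactly this pair of variables. Hence the midpoint also satisfies $c\bigl((y_1,\frac{a-b}2),(\frac{z+w}2,t),\dots\bigr)\le1$. This holds for all admissible choices, so $\bigl(\frac{z+w}{2},t\bigr)\in B$, which proves the containment.

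\textbf{The inequality for $m_2$.} For fixed $t$, let $f(y)=\rho_2(y+te_n)$, which is symmetric and log-concave. The Prékopa--Leindler inequality gives
$$
\int_{\frac12(K+(-K))}f\ \ge\ \Bigl(\int_K f\Bigr)^{1/2}\Bigl(\int_{-K}f\Bigr)^{1/2}=\int_K f,
$$
where the last equality uses the symmetry of $f$. Applying this with $K=A_2(t)$ and the containment above yields
$$
\int_{B(t)}\rho_2(y+te_n)\,dy\ \ge\ \int_{A_2(t)}\rho_2(y+te_n)\,dy .
$$
Integrating in $t$ gives $m_2(B)\ge m_2(A_2)$.

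I expect the main obstacle to be the containment step. The difficulty is matching the sign flip of item (3) with the precise pair of variables in which item (2) guarantees convex sublevel sets, so that the two constraint points really differ only in $(t_1,y_2)$ with everything else fixed. The measurability and compactness of $B$ and its slices are routine by comparison.
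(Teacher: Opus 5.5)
Your proposal is correct and follows the paper's proof essentially step for step: Steiner symmetrization of $A_1$, $B$ taken as the polar $c$-transform of $\tilde A_1,A_3,\dots,A_N$, the slice containment $\tfrac12\bigl(A_2(r)+A_2(-r)\bigr)\subset B(r)$ obtained from item (3) and then item (2), and a fiberwise Pr\'ekopa--Leindler bound that uses the symmetry of $A_2$ and of $y\mapsto\rho_2(y+te_n)$. The differences are minor: you prove the $m_1$ inequality directly by one-dimensional rearrangement instead of citing it, you skip the paper's reduction to $A_2$ being the polar transform (which is not needed), and you read item (2) as convexity in the pair $(t_1,y_2)$, which is exactly what the paper's argument uses even though Definition~\ref{2402} literally names the variables $(y_{i_1},t_{i_2})$.
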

\begin{proof}
We observe first that the Steiner symmetrization $\tilde{A}_1$ of $A_1$ satisfies $m_1(\tilde{A}_1) \ge m_1(A_1)$. This is a known property (see, for instance, Proposition 4.7 in \cite{CKLR} for the functional version of the statement and the references therein).

    Without loss of generality, one can take for $A_2$ the polar $c$-transform of $A_1,  A_3, \cdots, A_N$:
    $$
A_2 = \{ x_2: c(x) \le 1, \mbox{for all } x_i \in A_i,  i \ne 2\}.
    $$
    Similarly, we define $B$ as the $c$-transform of  $\tilde{A}_1$ and $A_i$ with $i>2$:
    $$
B = \{ x_2: c(x) \le 1, \mbox{for all }x_1 \in \tilde{A}_1, x_i \in A_i,  i > 2\}.
    $$
Since the $c$-polar transform preserves symmetric convex bodies, both bodies are symmetric and convex.

      For an arbitrary $A \subset \mathbb{R}^n$, we denote by $A(r)$  the section of $A$ at the level $r$:
    $$
A(r) = \{y: (y,r) = y + r e_n \in A\}.
    $$
    We want to prove that
    \begin{equation}
    \label{aver}
\frac{A_2(r) + A_2(-r)}{2} \subset B(r).
    \end{equation}
    Next, we use  (\ref{aver}), the  Prekopa--Leindler inequality, the assumptions of symmetry of $A_2$, and the log-concavity of $\rho_2(y+te)$: 
\begin{align*}
    m_2(B) & = \int_{\mathbb{R}}
 \bigl( \int_{B(r)} \rho_2(y+te_n)dy \bigr) dt
 \ge \int_{\mathbb{R}}   
 \bigl( \int_{\frac{A_2(r) + A_2(-r)}{2}} \rho_2(y+te_n)dy \bigr) dt
 \\& \ge 
  \int_{\mathbb{R}}   
 \bigl( \int_{A_2(r)} \rho_2(y+te_n)dy \bigr)^{\frac{1}{2}} \bigl( \int_{A_2(-r)} \rho_2(y+te_n)dy \bigr)^{\frac{1}{2}} dt
 = \int_{\mathbb{R}}   
  \int_{A_2(r)} \rho_2(y+t e_n)dy  dt \\& = m_2(A_2).
\end{align*}
    It remains to prove (\ref{aver}). One has
    \begin{align*}
B(r)  = & \{v : c\bigl((y_1,(t-s)/{2}),(v,r), (y_3,t_3),\cdots,(y_N,t_N) \bigr) \le 1, \\& \mbox{for all } (y_1,t) \in A_1, (y_1,s) \in A_1, (y_i,t_i) \in A_i, i >2 \}.
    \end{align*}
Take an arbitrary $a \in A_2(r),b \in A_2(-r)$. One has
$$
c\bigl((y_1,t),(a,r), (y_3,t_3),\cdots,(y_N,t_N)\bigr) \le 1, 
\ \ \ 
c\bigl((y_1,s),(b,-r), (y_3,t_3),\cdots,(y_N,t_N)\bigr) \le 1, 
$$
for all $(y_1,t),(y_1,s)\in A_1, (y_i,t_i) \in A_i, i >2$.
Hence,
$$
c\bigl((y_1,-s),(b,r), (y_3,t_3),\cdots,(y_N,t_N)
= c\bigl((y_1,s),(b,-r), (y_3,t_3),\cdots,(y_N,t_N)
\le 1,
$$
by  item (3) of the $(n,1,2)$ assumption (see Definition \ref{2402}), hence by  item (2) 
$$
1 \ge c\bigl((y_1,(t-s)/{2}),((a+b)/{2},r), (y_3,t_3),\cdots,(y_N,t_N)\bigr).
$$
Thus $\frac{a+b}{2} \in B(r)$ and the proof is completed.    
\end{proof}

Recall that a set $A$ is called unconditional with respect  to the basis $e_1, \cdots,e_n$ if 
$$
x = \sum_{i=1}^n x_i e_i \in A \Longleftrightarrow \sum_{i=1}^n s_ix_i e_i \in A
$$
for any tuple $\{s_i\}$ such that $s_i \in \{-1,1\}$.

\begin{corollary}
\label{uncond-reduction}
    Let $\{m_i\}$ be log-concave unconditional measures, $1 \le i \le N$, and $c$ satisfy the $(j,i_1,i_2)$-assumption for all $j,i_1,i_2$. 

Assume, in addition, that the polar $c$-transform preserves 1) symmetric sets, 2) unconditional sets, 3) convex sets. 
Then, for every admissible tuple $\{A_i\}$ of symmetric convex bodies, there exists an admissible tuple $\{B_i\}$ of convex unconditional bodies with the property 
    $$
    m_i(A_i) \le m_i(B_i).
    $$
\end{corollary}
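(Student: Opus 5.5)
The plan is to apply Lemma \ref{sym} repeatedly, with the coordinate direction $e_j$ in place of $e_n$ and the pair $(i_1,i_2)$ in place of $(1,2)$. Each application increases two of the measures, so after a suitable sequence of steps the sets become symmetric with respect to every coordinate hyperplane. First we check that the hypotheses of Lemma \ref{sym} hold for every choice of $(j,i_1,i_2)$. An unconditional log-concave measure $m=\rho\,dx$ has a density that is even in each coordinate. By log-concavity, $t\mapsto\rho(y+te_j)$ is even and log-concave, hence non-increasing on $[0,\infty)$, which gives condition (1). The restriction $y\mapsto\rho(y+te_j)$ to the hyperplane $e_j^\perp$ is log-concave and symmetric, which gives condition (2). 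The $(j,i_1,i_2)$-assumption is part of our hypotheses.

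A single step works as follows. We Steiner-symmetrize $A_{i_1}$ in direction $e_j$. We replace $A_{i_2}$ by the $c$-polar of the remaining sets, and we do not decrease any measure. To keep the remaining sets as genuine polars, we may also first replace every set by its $c$-polar transform of the others. This transform only enlarges sets, so admissibility and monotonicity of all $m_i(A_i)$ are preserved.

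Next we organize the iteration so that it reaches an unconditional tuple. Steiner symmetrization in direction $e_j$ preserves symmetry with respect to the other coordinate hyperplanes $e_k^\perp$, $k\neq j$, provided the set was already symmetric with respect to them. Likewise, the polar $c$-transform preserves unconditional sets and symmetric convex sets, and we want it to preserve partial reflection symmetries as well. I would arrange the steps so that symmetry in each direction is achieved once for every index and is then kept. For each direction $j$ we symmetrize $A_1$ (using $i_2=2$), then $A_2$ (using $i_2=3$ or $1$), and so on. A set that is already Steiner symmetric in $e_j$ is unchanged by a further symmetrization. After the final polar step, the set reconstructed as a polar of $e_j$-symmetric sets should itself be $e_j$-symmetric. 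This needs item (3) of the $(j,i_1,i_2)$-assumption, used in the same way as in the proof of Lemma \ref{sym}.

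The main obstacle is the bookkeeping in this last step. We must ensure that the set $B$ produced for index $i_2$ eventually becomes symmetric as well, and that later steps do not destroy symmetries gained earlier. If the direct ordering fails, I would fall back on a compactness argument. Among all admissible symmetric convex tuples dominating $\{A_i\}$ measure-wise, and normalized as polars (and hence bounded, by the hypotheses), take one maximizing $\sum_i m_i(A_i)$ with a tie-breaking functional. Blaschke selection, together with continuity of $c$ and of the measures, gives such a limit tuple. Using Lemma \ref{sym} and the strict increase under Steiner symmetrization for non-symmetric sets (or a Hausdorff-distance argument in the style of the classical proof of the isoperimetric inequality), we then show that the limit tuple must be invariant under every coordinate reflection, i.e. unconditional. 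Finally, the polar $c$-transform of an unconditional tuple stays unconditional, so the resulting $\{B_i\}$ is admissible, convex, and unconditional, with $m_i(A_i)\le m_i(B_i)$.
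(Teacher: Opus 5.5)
Your check that unconditional log-concave densities satisfy conditions (1)--(2) of Lemma \ref{sym} is correct, and iterating that lemma is the right strategy. The gap is the step you yourself flag as the ``main obstacle''. In your direction-by-direction ordering, each application of Lemma \ref{sym} replaces $A_{i_2}$ by a $c$-polar set. The proposal only says that you \emph{want} this polar to keep the coordinate symmetries gained earlier, and that it \emph{should} be $e_j$-symmetric. Neither claim is proved, so as written nothing guarantees that the final tuple is unconditional. The compactness fallback does not close this. For Lebesgue measure, which is a legitimate unconditional log-concave measure, Steiner symmetrization preserves $m_i$ exactly, so there is no strict increase that would force a maximizer to be symmetric. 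Also, boundedness of the polar sets is not among the hypotheses.

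The missing fact follows in one line from item (3). Let $R_k$ be the reflection in $e_k^\perp$, and let $P$ be the $c$-polar at index $i_2$ of the sets $A_i$, $i\ne i_2$. Suppose just one of them, $A_{i_1}$, satisfies $R_kA_{i_1}=A_{i_1}$. Then for $x_{i_2}\in P$ and $x_i\in A_i$,
\[
c(\ldots,x_{i_1},\ldots,R_kx_{i_2},\ldots)=c(\ldots,R_kx_{i_1},\ldots,x_{i_2},\ldots)\le 1
\]
by the $(k,i_1,i_2)$-assumption, so $R_kP=P$. Steiner symmetrization in $e_j$ also preserves symmetry in $e_k^\perp$ for $k\ne j$. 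Together these make your ordering work. They even show that it suffices to make $A_1$ unconditional and then replace the other sets, one at a time, by their polars.

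The paper avoids the issue by ordering set by set instead of direction by direction:
\begin{enumerate}
\item Symmetrize $A_1$ in all directions $e_1,\dots,e_n$, with $A_2$ as the polar partner, so $A_1$ becomes unconditional.
\item Do the same for $A_2,\dots,A_{N-1}$, each time with a not-yet-treated set as partner, never touching sets already made unconditional.
\item Take $B_N$ to be the polar of $B_1,\dots,B_{N-1}$; it is unconditional by the hypothesis that the polar transform preserves unconditional sets.
\end{enumerate}
That route uses only this preservation hypothesis, whereas yours (once completed) relies on item (3) instead.
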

\begin{proof}
    We consequently apply the symmetrization with respect to $e_1, \cdots,e_n$ to the set $A_1$, keeping $A_3, \cdots, A_N$ fixed and defining the second body in the tuple by the $c$-polar transform. By Lemma  \ref{sym}, one finally obtains another tuple of even convex sets $\{A'_i\}$ such that $m_i(A'_i) \ge m_i(A_i)$, and, in addition, $A'_1$ is unconditional. Repeating this procedure $N-1$ times, we get unconditional sets $B_1, \cdots, B_{N-1}$. The set $B_N$ will be the $c$-polar transform of  $B_1, \cdots, B_{N-1}$. Since this operation preserves unconditional sets, we get the claim.
\end{proof}

\section{Main example}

In this section, we discuss examples of the Blaschke--Santal{\'o}-type inequalities, generalizing Theorem \ref{example1} from \cite{KS}.
Our extensions go in the following directions:
1) extension to unconditional log-concave reference measures in the spirit of Theorem 7 from \cite{FMZ},
2) extension to product functionals with different weights, i.e., functionals of the type $\prod_{i=1}^N m^{\frac{1}{\alpha_i}}(A_i)$.

For the case $N=2$, the following result was obtained in Fradelizi--Meyer--Zvavitch \cite{FMZ} (Theorem 7): 
 \begin{equation}
 \label{BS-CE}
\mu(A) \mu(A^{\circ}) \le \mu(B_2)^2
 \end{equation}
for any convex symmetric $A$ and unconditional log-concave measure $\mu$.  Cordero-Erausquin conjectured in \cite{CEcomplex} that (\ref{BS-CE}) is true for any symmetric log-concave $\mu$.

\begin{proposition}
\label{1211}
Let $f_i \colon \mathbb{R}_+^n \to \mathbb{R}_+$, $ 1\le i \le N$ be Borel functions,
$\{\alpha_i\}$ be $N$ positive numbers, and
$$
c(x_1,\cdots,x_N) 
= \sum_{j=1}^n x^{\frac{1}{\alpha_1}}_{1,j} \cdots x^{\frac{1}{\alpha_N}}_{N,j}.
$$
Assume that the  functions $\{f_i(x_i)\}$ satisfy the inequality
$$
\prod_{i=1}^N f^{\frac{1}{\alpha_i}}_i(x_i)
\le \rho(c(x))
$$
for some Borel function $\rho$. 
Let $m=e^{-V}dx$ be a measure on $\mathbb{R}^n_+$ such that the function $(t_1, \cdots,t_n) \to V(e^{t_1}, \cdots, e^{t_n})$ is convex.

Then 
\begin{align*}
 \prod_{i=1}^N \Bigl[ 
 \int_{\mathbb{R}^n_+} f_i(x_i)   m(dx_i)\Bigr]^{\frac{1}{\alpha_i}}
\le \Bigl( \int_{\mathbb{R}^n_+} \rho^{\frac{1}{A}}\bigl(\sum_{j=1}^n t^A_j\bigr) m(dt)\Bigr)^A,
\end{align*}
where $A =\sum_{j=1}^N \frac{1}{\alpha_j}$.

In particular, if a tuple of functions $\{V_i(x_i)\}$ satisfies $\sum_{i=1}^N V_i(x_i) \ge \sum_{j=1}^n x^{\frac{1}{\alpha_1}}_{1,j} \cdots x^{\frac{1}{\alpha_N}}_{N,j}$, then
$$
\prod_{i=1}^N \Bigl[ 
 \int_{\mathbb{R}^n_+} e^{-\alpha_i V_i(x_i)} m(dx_i)\Bigr]^{\frac{1}{\alpha_i}}
\le \Bigl( \int_{\mathbb{R}^n_+} e^{-\frac{1}{A}\sum_{j=1}^n t^A_j} m(dt)\Bigr)^A.
$$
\end{proposition}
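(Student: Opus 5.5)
The plan is a logarithmic change of variables followed by a multi-function Prékopa--Leindler inequality. This is the route used for Theorem \ref{example1} in \cite{KS} and for the unconditional case in \cite{FMZ}.

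\textbf{Change of variables.} On $\mathbb{R}^n_+$ write $x_{i,j}=e^{s_{i,j}}$, so that $x_i=e^{s_i}$ coordinatewise with $s_i\in\mathbb{R}^n$. Then
$$
m(dx_i)=e^{-V(e^{s_i})+\sum_j s_{i,j}}\,ds_i=e^{-W(s_i)}\,ds_i ,
$$
where $W(s)=V(e^{s_1},\dots,e^{s_n})-\sum_j s_j$ is convex by the hypothesis on $V$. Define
$$
F_i(s_i)=f_i(e^{s_i})\,e^{-W(s_i)},\qquad H(u)=\rho^{1/A}\Bigl(\sum_j e^{Au_j}\Bigr)e^{-W(u)}.
$$
The right-hand side of the claim is then $\bigl(\int_{\mathbb{R}^n} H\,du\bigr)^A$, since $t_j=e^{u_j}$ gives $t_j^A=e^{Au_j}$.

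\textbf{Verifying the Prékopa--Leindler hypothesis.} Set $\lambda_i=\frac{1}{\alpha_i A}$, so that $\sum_i\lambda_i=1$, and put $u=\sum_i\lambda_i s_i$. The aim is the pointwise bound
$$
\prod_i F_i(s_i)^{\lambda_i}\le H(u).
$$
Two facts give it.
\begin{itemize}
\item By convexity of $W$, $\prod_i e^{-\lambda_i W(s_i)}\le e^{-W(u)}$.
\item By hypothesis, $\prod_i f_i^{\lambda_i}(e^{s_i})=\bigl(\prod_i f_i^{1/\alpha_i}\bigr)^{1/A}\le \rho^{1/A}(c(e^{s_1},\dots,e^{s_N}))$.
\end{itemize}
Moreover,
$$
c(e^{s_1},\dots,e^{s_N})=\sum_j \exp\Bigl(\sum_i s_{i,j}/\alpha_i\Bigr)=\sum_j e^{A u_j},
$$
which is exactly the argument of $\rho$ appearing in $H(u)$.

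\textbf{Conclusion.} The Prékopa--Leindler inequality for $N$ functions with weights $\lambda_i$ gives
$$
\prod_i\Bigl(\int F_i\Bigr)^{\lambda_i}\le\int H .
$$
Raising both sides to the power $A$ yields the claim, because $\lambda_iA=1/\alpha_i$.

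\textbf{Particular case.} Take $f_i=e^{-\alpha_iV_i}$ and $\rho(r)=e^{-r}$. Admissibility $\sum_i V_i\ge c$ gives $\prod_i f_i^{1/\alpha_i}=e^{-\sum_i V_i}\le e^{-c}$, so the hypothesis holds. Then $\rho^{1/A}(\sum_j t_j^A)=e^{-\frac1A\sum_j t_j^A}$, which is the stated bound.

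\textbf{Main obstacle.} The only delicate point is the bookkeeping that turns the multiplicative constraint into an exact identity for the argument of $\rho$. The weights $\lambda_i=\frac{1}{\alpha_iA}$ make the exponent $\sum_i s_{i,j}/\alpha_i$ equal to $Au_j$, and they are also the weights Prékopa--Leindler needs. Beyond that, only measurability issues need a remark; $\rho$ need not be monotone, since the identity is exact. Finiteness issues are harmless: if $\int H=\infty$ there is nothing to prove.
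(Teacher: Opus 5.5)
Your proposal is correct and follows essentially the same route as the paper. The paper also uses the exponential change of variables $x_{i,j}=e^{s_{i,j}}$, then the $N$-function Pr\'ekopa--Leindler inequality with weights $\frac{1}{A\alpha_i}$, then the exact identity $c(e^{s_1},\dots,e^{s_N})=\sum_j e^{Au_j}$ together with convexity of $V(e^{t})$. Absorbing the Jacobian $e^{\sum_j s_j}$ into the convex function $W$ is only a tidier way of organising the same computation, and it avoids some index typos that appear in the paper's displayed chain.
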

\begin{proof} Following \cite{FradeliziMeyer2008(2)} we apply the exponential change of variables :
$x_{i,j} = e^{t_{i,j}}$ (or simply $x_i = e^{t_i}$) and the Prekopa--Leindler inequality
\begin{align*}
 \prod_{i=1}^N \bigl[ 
 \int_{\mathbb{R}^n} f_i(x_i)   e^{-V(x_i)}dx_i\bigr]^{\frac{1}{\alpha_i}} & =      \prod_{i=1}^N \bigl[ 
 \int_{\mathbb{R}^n} f_i(x_i) e^{-V(x_i)}   dx_{i,1} \cdots dx_{i,n}\bigr]^{\frac{1}{\alpha_i}}
\\& =
 \prod_{i=1}^N \bigl[ 
 \int_{\mathbb{R}^n_+} f_i(e^{t_i}) e^{-V(e^{t_i})}e^{ \sum_{j=1}^n t_{i,j}}  dt_{i,1} \cdots dt_{i,n}\bigr]^{\frac{1}{\alpha_i}}
 \\& \le \bigl( \int_{\mathbb{R}^n} \sup_{s_k = \frac{1}{A} \sum_{i=1}^N \frac{t_{i,k}}{\alpha_i} } 
 \prod_{i=1}^N f_i^{{\frac{1}{A\alpha_i}}} (e^{t_i}) e^{-\frac{1}{A\alpha_i}V(e^{t_i})}
e^{\sum_{k=1}^n s_k}  ds_1 \cdots ds_n\bigr)^A
\\& \le \bigl( \int_{\mathbb{R}^n}\sup_{s_k = \frac{1}{A} \sum_{i=1}^N \frac{t_{i,k}}{\alpha_i} } 
\rho^{\frac{1}{A}}\Bigl(\sum_{j=1}^n e^{\sum_{i=1}^N\frac{t_{i,j}}{\alpha_j}}\bigr) e^{-V(e^{\frac{1}{A} \sum_{i=1}^N \frac{t_i}{\alpha_i}})}
e^{\sum_{k=1}^n s_k}  ds\bigr)^A
\\& = \bigl( \int_{\mathbb{R}^n} \rho^{\frac{1}{A}}\bigl(\sum_{j=1}^n e^{As_j}\bigr)
e^{\sum_{k=1}^n s_k}   e^{-V(e^s)} ds\bigr)^A
= \bigl( \int_{\mathbb{R}^n_+} \rho^{\frac{1}{A}}\bigl(\sum_{j=1}^n t^A_j\bigr) e^{-V(t)} dt\bigr)^A.
\end{align*}
\end{proof}

\begin{theorem}
(Extension of Theorem \ref{example1})
Let  $N \ge 2$ be a natural number and $m$ be a log-concave  unconditional measure on $\mathbb{R}^n$. Consider the following cost function: $$
c(x) = c(x_1,\cdots,x_N) 
= \sum_{j=1}^n x_{1,j}x_{2,j} \cdots x_{N,j} 
$$ 
on $(\mathbb{R}^n)^N$.
Assume we are given $N$ symmetric convex sets
 $A_i \subset \mathbb{R}^n$, $1 \le i \le N$  such that
$c(x_1,\cdots,x_N) \le 1$ on the set $\prod_{i=1}^N A_i \subset (\mathbb{R}^n)^N$. 
Then
$$
\prod_{i=1}^N m(A_i) \le m(B_N)^N.
$$
Let $\{V_i\}$, $1 \le i \le N,$ be even measurable functions with values in $(-\infty,+\infty]$ and satisfying
$
\sum_{i=1}^N V_i(x_i) \ge c(x).
$
Assume, in addition, that $e^{-V_i}\in L^1(m)$ for every $i$. 
Then
$$
\prod_{i=1}^N \int_{\mathbb{R}^n} e^{-V_i(x_i)} m(dx_i)
\le \Bigl( \int_{\mathbb{R}^n} e^{-\frac{1}{N} \sum_{i=1}^n |y_i|^N} m(dy)\Bigr)^N.
$$
In addition,  let $m$ be the Lebesgue measure and $\{\Phi_i\}$ be a maximizing tuple of the functional $\prod_{i=1}^N \int_{\mathbb{R}^n} e^{-V_i(x_i)} dx_i $ under the same constraints with $\Phi_i(0)=0$.Then $\Phi_i = \frac{1}{N} \|x_i\|^N_{K_i}$, where $\{K_i\}$ is a maximizing tuple for the corresponding set problem and $\|\cdot\|_{K_i}$ are the associated Minkowski functionals.
\end{theorem}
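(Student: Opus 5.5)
The plan is to reduce to the unconditional case via Corollary \ref{uncond-reduction}, and then to handle unconditional sets and functions with Proposition \ref{1211}, which is a Prékopa--Leindler argument on the positive orthant after an exponential change of variables.

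\emph{Set inequality.} The cost $c$ satisfies the $(j,i_1,i_2)$-assumption for all $j,i_1,i_2$ (Example \ref{maincost}). It also satisfies assumption (2) of Corollary \ref{maincorollary}: flipping $x_i$ is the same as flipping any other single $x_k$. Moreover, $c$ is linear in each variable, so the polar $c$-transform preserves symmetric, convex and unconditional sets; for unconditionality, flip the same coordinate in one of the other variables. Corollary \ref{uncond-reduction} therefore lets us assume that every $A_i$ is unconditional.

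Write $A_i^+=A_i\cap\mathbb{R}^n_+$ and $f_i=1_{A_i^+}$. Then $m(A_i)=2^n m(A_i^+)$, and unconditionality of $m$ means its restriction to $\mathbb{R}^n_+$ is $e^{-V}dx$. Here $V$ is convex and coordinatewise nondecreasing on $\mathbb{R}^n_+$ (even and convex in each coordinate), so $t\mapsto V(e^{t})$ is convex: it is a convex nondecreasing function composed with convex maps.

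Apply Proposition \ref{1211} with $\alpha_i=1$, so $A=N$. Take $\rho=1_{[0,1]}$; then $\prod f_i(x_i)\le\rho(c(x))$ holds because $c\le1$ on $\prod A_i$. The conclusion reads $\prod m(A_i^+)\le m(\{t\in\mathbb{R}^n_+:\sum t_j^N\le1\})^N=m(B_N\cap\mathbb{R}^n_+)^N$. Multiplying by $2^{nN}$ gives the set inequality.

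\emph{Functional inequality.} The functions $V_i$ are merely even and measurable, so we first replace them by the convex even functions obtained from the multiple $c$-Legendre transform, which only increases the functional. We then want to reach unconditional $V_i$. I expect this to be the main obstacle.

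One option is a direct functional Steiner-type symmetrization analogous to Lemma \ref{sym}. The cleaner option is to pass through sets. Apply the set inequality with the weighted measures $m_i=m$ on the level sets via the layer-cake argument of Proposition \ref{set-func}, with $p=N$, $r_i=0$, $\alpha_i=1$ and $\beta_i=N$. Proposition \ref{set-func} is stated for a maximizing tuple, but its proof only uses the bound $\prod m_i(A_i)^{1/\alpha_i}\le M$ for admissible tuples. With $M=m(B_N)^N$, the maximal value is attained by $K_i=B_N$, and the proof yields the bound $\prod\int e^{-V_i}\,dm\le\bigl(\int e^{-\frac1N\|y\|_{B_N}^N}dm\bigr)^N$. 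Since $\|y\|_{B_N}^N=\sum|y_j|^N$, this is the claimed inequality.

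Alternatively, for unconditional $V_i$ one can apply Proposition \ref{1211} directly with $\rho=e^{-s}$ on each orthant. Each orthant gives the same integral by unconditionality.

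\emph{Last claim (Lebesgue measure).} The cost is $(1,\dots,1)$-homogeneous, continuous, convex in each variable, and satisfies the sign condition. Corollary \ref{maincorollary} then shows that every maximizer with $\Phi_i(0)=0$ is homogeneous of degree $\beta_i=N$, since here $r_i=0$, $p_i=1$ and $\alpha_i=1$. Corollary \ref{1002+} identifies $\Phi_i$ as $\tau_i\|x_i\|_{K_i}^N$ with $\tau_i=1/N$ and $\{K_i\}$ a maximizing set tuple. Admissibility is guaranteed by Theorem \ref{30012016}, whose exponent condition $\beta_i=p_i/\tau_i=N$ is consistent.
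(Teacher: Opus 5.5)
Your proof of the set inequality matches the paper: reduction by Corollary~\ref{uncond-reduction}, then Proposition~\ref{1211} with $f_i=1_{A_i\cap\mathbb{R}^n_+}$ and $\rho=1_{[0,1]}$. So does your treatment of the maximizers via Corollaries~\ref{maincorollary} and \ref{1002+}. Your check that $t\mapsto V(e^{t})$ is convex fills in a detail the paper leaves implicit.

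For the functional inequality you take a different route. The paper symmetrizes the functions themselves, a step it only sketches by analogy with $N=2$. It then applies Proposition~\ref{1211} with $\rho(s)=e^{-s}$. You instead deduce the functional inequality from the set inequality via the layer-cake argument of Proposition~\ref{set-func}, after a preliminary $c$-Legendre step that makes the $V_i$ convex and even. This is a good route, because it needs only the set-level symmetrization of Lemma~\ref{sym}, which the paper actually proves.

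However, your justification of that step is wrong as written. The proof of Proposition~\ref{set-func} does not use only the bound $\prod_i m_i(A_i)^{1/\alpha_i}\le M$. It also uses the standing Section~4 assumption that each $m_i$ is homogeneous, $m_i(\lambda E)=\lambda^{n+r_i}m_i(E)$. This is what lets it pass from the level sets $\tilde A_i=\{V_i\le t_i\}$, on whose product $c\le s:=\sum_j t_j$, to the admissible sets $s^{-1/N}\tilde A_i$ and back.

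Setting $r_i=0$ is legitimate only for Lebesgue measure. For a general log-concave unconditional $m$, such as a Gaussian, in general $\prod_i m(s^{1/N}A_i)\neq s^{n}\prod_i m(A_i)$. So your argument breaks in exactly the case that is new in this theorem.

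The repair is short but has to be stated. What the layer-cake argument really needs is
\[
\prod_{i=1}^N m(\tilde A_i)\le m\bigl(s^{1/N}B_N\bigr)^N \quad\text{whenever } c\le s \text{ on } \textstyle\prod_i \tilde A_i .
\]
This follows by applying the already established set inequality to the dilated measure $m_s(E)=m(s^{1/N}E)$, which is again log-concave and unconditional. Then the one-dimensional Pr\'ekopa--Leindler step with weights $1/N$ gives
$\prod_i\int e^{-V_i}\,dm\le\bigl(\int_0^\infty m((Nt)^{1/N}B_N)e^{-t}\,dt\bigr)^N=\bigl(\int e^{-\frac1N\sum_j|y_j|^N}\,dm\bigr)^N$, as claimed. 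The H\"older step is not needed here, since all $\alpha_i=1$ and there is a single measure.
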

\begin{proof}
Clearly, the $c$-polar transform preserves symmetric and unconditional  sets, and according to Example \ref{maincost}, the cost function $c$
satisfies the $(j,i_1,i_2)$-assumption for all indices.
By Corollary \ref{uncond-reduction}, the desired inequality can be reduced to the unconditional case. Thus, without loss of generality, one can assume that the sets $A_i$ are unconditional. The same symmetrization procedure can be applied to functions.  We don't present the complete proof here, but we can refer to \cite{CKLR} Theorem 4.1, where the symmetrization for functions was considered in detail for $N=2$. The case $N>2$ follows the arguments 
of Lemma \ref{sym} and Theorem 4.1 of \cite{CKLR}.
Finally, after the reduction to the unconditional case, both inequalities (for sets and functions) follow from Proposition \ref{1211}.
The statement about the homogeneity of the maximizers follows from Corollary \ref{maincorollary}. Finally, the description of the maximizers in terms of the Minkowski functionals follows from Corollary \ref{1002+}.
\end{proof}

The same arguments can be applied in the proof of  the following result.

\begin{theorem}
\label{example2}
Consider the cost function  $$
c(x_1,\cdots,x_N) 
= \sum_{j=1}^n |x_{1,j}|^{\frac{1}{\alpha_1}} |x_{2,j}|^{\frac{1}{\alpha_2}} \cdots |x_{N,j}|^{\frac{1}{\alpha_N}}, 
$$ where $\alpha_i \le 1$.
Let $A_i \subset \mathbb{R}^n$, $1 \le i \le N$ be $N$ symmetric convex sets satisfying
$c(x_1,\cdots,x_N) \le 1$, provided $x_i \in A_i$, $1 \le i \le N$; and let $m$ be a log-concave unconditional measure. Then
$$
\prod_{i=1}^N m(A_i)^{\frac{1}{\alpha_i}} \le m(B)^{A}, 
$$
where $B = \bigl\{ t:  \sum_{j=1}^n |t_j|^{A} \le 1\bigr\}$, $A = \sum_{i=1}^N \frac{1}{\alpha_i}$.

Let $\{V_i\}$ be even measurable functions with values in $(-\infty,+\infty]$ and satisfying
$
\sum_{i=1}^N V_i(x_i) \ge c(x).
$ 
Assume, in addition, that $e^{-V_i}\in L^1(m)$ for every $i$. 
Then
$$
\prod_{i=1}^N \Bigl(\int_{\mathbb{R}^n} e^{-\alpha_iV_i(x_i)} m(dx_i) \Bigr)^{\frac{1}{\alpha_i}}
\le \Bigl( \int_{\mathbb{R}^n} e^{-\frac{1}{A} \sum_{i=1}^n |y_i|^A} m(dy)\Bigr)^A.
$$
In addition, let $m$ be the Lebesgue measure. Then every maximizing tuple $\{\Phi_i\}$ with $\Phi_i(0)=0$ consists of $A$-homogeneous functions of the form $\{{\frac{1}{A\alpha_i}} \|x_i\|^A_{K_i}\}$, where $\{K_i\}$ is a maximizing tuple of the corresponding set problem.
\end{theorem}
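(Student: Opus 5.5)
We follow the scheme of the proof of the extension of Theorem \ref{example1}: reduce to unconditional sets by symmetrization, solve the unconditional case with Proposition \ref{1211}, and read off the maximizers from Sections 3--4.

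\textbf{Step 1 (structure of $c$).} Since $\frac{1}{\alpha_i} \ge 1$, each map $t \mapsto |t|^{1/\alpha_i}$ is convex and even. Hence $c = \sum_{j=1}^n \prod_{i=1}^N |x_{i,j}|^{1/\alpha_i}$ has the following properties.
\begin{itemize}
\item It is convex in every variable $x_i$.
\item It is invariant under every sign change of coordinates, so assumption (2) of Corollary \ref{maincorollary} holds with all $s_j = 1$.
\item It is $(p_1,\cdots,p_N)$-homogeneous with $p_i = 1/\alpha_i$. Note that $\alpha_i = \frac{1}{p_i}$ is exactly the choice $\alpha_i = \frac{n+r_i}{np_i}$ with $r_i = 0$ required in Theorem \ref{homogtheorem0}.
\end{itemize}
By Example \ref{maincost}, $c$ satisfies the $(j,i_1,i_2)$-assumption for all indices. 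Indeed, the quasi-convexity in item (2) of Definition \ref{2402} follows from the fact that the single summand $|t_{i_1}|^{1/\alpha_{i_1}}\, |t_{i_2}|^{1/\alpha_{i_2}}$ has convex sublevel sets in $(y_{i_1},t_{i_2})$ separately. The $c$-polar transform preserves symmetric, unconditional and convex sets, because $c$ is invariant under coordinate reflections and convex in each variable.

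\textbf{Step 2 (reduction to unconditional bodies).} The measure $m$ is log-concave and unconditional, so its density satisfies the hypotheses of Lemma \ref{sym} in every coordinate direction: it is even and decreasing along lines parallel to $e_j$, and its sections are symmetric and log-concave. Corollary \ref{uncond-reduction} then replaces an admissible tuple $\{A_i\}$ by an admissible tuple $\{B_i\}$ of unconditional convex bodies with $m(B_i) \ge m(A_i)$.

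For the functional inequality we apply the same symmetrization to the level sets, following Theorem 4.1 of \cite{CKLR}. Alternatively, we deduce the functional inequality from the set inequality via Proposition \ref{set-func} and Theorem \ref{30012016} with $r_i = 0$; these give $\beta_i = A$ and $\tau_i = \frac{1}{A\alpha_i}$.

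\textbf{Step 3 (unconditional case).} For unconditional sets and even functions everything reduces to $\mathbb{R}^n_+$, where the constraint reads $\sum_j \prod_i x_{i,j}^{1/\alpha_i} \le 1$.
\begin{itemize}
\item For sets, apply Proposition \ref{1211} with $f_i = 1_{A_i \cap \mathbb{R}^n_+}$ and $\rho = 1_{[0,1]}$. The right-hand side becomes $m(B \cap \mathbb{R}^n_+)^A$. Multiplying by $2^n$ in each factor balances, since $\sum_i \frac{1}{\alpha_i} = A$.
\item For functions, use the second part of Proposition \ref{1211} directly.
\end{itemize}
The hypothesis of Proposition \ref{1211} holds because $V(e^{t})$ is convex when $m = e^{-V}$ is unconditional log-concave. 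Indeed, on $\mathbb{R}^n_+$ an unconditional log-concave $V$ is coordinatewise nondecreasing, and a convex nondecreasing function composed with the convex map $t \mapsto e^t$ is convex.

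\textbf{Step 4 (maximizers).} For Lebesgue measure, Corollary \ref{maincorollary} applies by Step 1. It gives that every maximizer with $\Phi_i(0) = 0$ is $\beta_i$-homogeneous with $\beta_i = \sum_j \frac{1}{\alpha_j} = A$. Corollary \ref{1002+} then identifies $\Phi_i = \tau_i \|x_i\|_{K_i}^{A}$ with $\tau_i = \frac{1/\alpha_i}{A} = \frac{1}{A\alpha_i}$, where $\{K_i\}$ is a maximizing tuple of the set problem.

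\textbf{Main obstacle.} The delicate point is Step 2 for functions: carrying the two-body symmetrization of Lemma \ref{sym} over to level sets, while keeping admissibility and evenness under the multiple $c$-Legendre transform. The first thing to try is the route through Proposition \ref{set-func}, which avoids symmetrizing functions altogether.
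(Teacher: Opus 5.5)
Your treatment of the set inequality and of the maximizers matches the paper's argument. The paper proves this theorem by rerunning the proof of the extension of Theorem \ref{example1}: Corollary \ref{uncond-reduction}, then Proposition \ref{1211} in the unconditional case, then Corollaries \ref{maincorollary} and \ref{1002+}. Your checks of the hypotheses are correct: $p_i=1/\alpha_i$, $r_i=0$, $\beta_i=A$, $\tau_i=\frac{1}{A\alpha_i}$, the $2^n$ bookkeeping, and convexity of $V\circ\exp$ via coordinatewise monotonicity. Your option (a) for the functional inequality, symmetrizing level sets as in Theorem 4.1 of \cite{CKLR}, is also what the paper does, at the same sketchy level of detail.

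The route you say you would try first, Proposition \ref{set-func}, does not give the stated result. That proposition is proved under the standing assumption of Section 4 that $m_i$ has an $r_i$-homogeneous density, which it uses to rescale level sets via $m_i(\lambda E)=\lambda^{n+r_i}m_i(E)$. A symmetric log-concave density with full-dimensional support is homogeneous only if it is constant. Indeed, $\rho(0)\ge\sqrt{\rho(x)\rho(-x)}=\rho(x)$ excludes $r>0$, and a concave function that is constant on rays and finite near $0$ is constant. So this route only covers Lebesgue measure; there the set maximizer it presupposes does exist, since $(B,\dots,B)$ is admissible by H\"older.

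The route is easy to repair, and the repair is worth having because it removes the symmetrization of functions entirely.
First replace the $V_i$ by iterated multiple $c$-transforms, which are convex, even and still admissible, normalized as in Section 4 so that $V_i\ge 0$. Then the level sets $\tilde A_i=\{V_i\le s_i/\alpha_i\}$ satisfy $c\le\lambda:=\sum_i s_i/\alpha_i$ on $\prod_i\tilde A_i$. Since $\sum_i p_i=A$, the tuple $\{\lambda^{-1/A}\tilde A_i\}$ is admissible. The dilated measure $m(\lambda^{1/A}\,\cdot\,)$ is again unconditional and log-concave, so the set inequality applied to it gives $\prod_i m(\tilde A_i)^{1/\alpha_i}\le m(\{y:\sum_j|y_j|^A\le\lambda\})^A$. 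Insert this into the layer-cake and one-dimensional Pr\'ekopa--Leindler step (weights $\tau_i$) of the proof of Proposition \ref{set-func}, with $\lambda=A\sum_i\tau_i s_i$. This yields exactly $\bigl(\int e^{-\frac{1}{A}\sum_j|y_j|^A}\,dm\bigr)^A$.

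Two small slips:
\begin{enumerate}
\item In Step 3, even functions do not reduce to $\mathbb{R}^n_+$; only unconditional ones do. So route (a) must actually produce unconditional functions. The repaired route (b) does not need the functional half of Proposition \ref{1211} at all.
\item In Step 1, the reason item (2) of Definition \ref{2402} holds is different from what you wrote. With the other variables frozen, $c$ is a convex function of $y_{i_1}$ plus a convex function of $t_{i_2}$, hence jointly convex. It is not enough that a summand has convex sublevel sets separately in each variable.
\end{enumerate}
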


\begin{remark}
    In view of Theorems \ref{example1}, \ref{example2}, it is natural to ask about examples of the Blaschke--Santal{\'o} inequality for homogeneous non-Lebesgue reference measures. One such example can be constructed from the result of Theorem \ref{example2} with the Lebesgue reference measure just by the change of variables
$$
x_{i,j} = {\rm{sign}}(y_{i,j}) |y_{i,j}|^{\tau_i}, \ \ \ {\tau_i} \ne 0.
$$
Examples of maximizers for the functional 
\begin{equation}
    \label{BSnongauss}
|A|^{\frac{1}{p}} \Bigl(\int_{A^\circ} \rho dy \Bigr)^{\frac{1}{q}},
\end{equation}
where $p >1 , \frac{1}{p} + \frac{1}{q}=1$, and $\rho$ is $\frac{2-p}{p-1}$-homogeneous, can be found in \cite{CKLR}.
We observe that it is problematic to extend  the inequality (\ref{BSnongauss}) to the case of $N>2$ sets using the symmetrization in the homogeneous case. This is  because we need the log-concavity of measures for the symmetrization procedure,  and  this assumption does not fit well with the homogeneity. 
\end{remark}

\end{document}